\newtheorem{theorem}{Theorem}[section]
\newtheorem{rem}[theorem]{Remark}
\newtheorem{defn}[theorem]{Definition}
\newtheorem{lemma}[theorem]{Lemma}
\newtheorem{prop}[theorem]{Proposition}
\newtheorem{corollary}[theorem]{Corollary}
\def\ddbar{\partial\bar\partial}
\def\d{\partial}
\def\cG{{\mathcal G}}
\def\cC{{\mathcal C}}
\let\ol=\overline
\let\ep=\varepsilon
\def\a{\alpha}
\def\z{\zeta}
\def\g{\gamma}
\def\t{\delta}
\def\b{\beta}
\def\a{\alpha}
\title[Remark]
{Subharmonicity of conic Mabuchi's functional, I}
\author{Long Li}
  \address{Department of Mathematics and Statistics, McMaster University,
1280 Main Street West, Hamilton, ON L8S 4K1, Canada}
  \email[Long Li]{lilong@math.mcmaster.ca}
\begin{document}
\maketitle

\begin{abstract}
The purpose of this paper is to generalize the convexity of Mabuchi's functional to the conic setting. 
We first established a frame to study conic cscK metrics, and then the conic Mabuchi functional was introduced in such a way that 
conic cscK metrics are its critical points. Finally we proved the convexity of the 
conic Mabuchi functional along any $\mathcal{C}^{1,\bar{1}}_{\beta}$ geodesic. 
\end{abstract}

\section{Introduction}

Let $\mathcal{H}$ be the space of all K\"ahler metrics in a fixed cohomology class. In 1986,
Mabuchi \cite{M} introduced the so called ``$K$-energy map"(Mabuchi's functional) on this space, 
whose Euler-Lagrange equations are constant scalar curvature K\"ahler (cscK) metrics. In the same time, he observed that the complex Hessian of the $K$ energy map is semi-positive on $\mathcal{H}$. In 1987, he introduced an $L^2$ norm (Mabuchi's $L^2$ norm) \cite{M2} on the tagent space of $\mathcal{H}$, such that
$\mathcal{H}$ becomes an infinite dimensional Riemannian manifold. Then many people studied the geometry of this infinite dimensional Riemannian manifold 
including S. Semmes, S.K. Donaldson, and so on.

In 2000, X.X. Chen \cite{Chen00} proved that there exists a $\mathcal{C}^{1,\bar{1}}$ geodesic connecting any two points in $\mathcal{H}$. 
Mabuchi's observation combined with this result becomes very important in the study of uniqueness of canonical 
metrics. In fact, it was conjectured by X.X. Chen that Mabuchi's functional is a convex and continuous function along geodesics. 
However, the lack of regularities on the geodesic obstructed many attempts.

The breakthrough is a recent work by Berman and Berndtsson \cite{BB}. 
They first proved that Mabuchi's functional is weakly convex(without continuity) along any $\mathcal{C}^{1,\bar{1}}$ geodesic. 
Their beautiful idea is based on the Bergman kernel approximation 
and the subharmonicity of $log$-Bergman kernels first due to Yamaguchi.
Later they improved this work and proved the continuity.

In a joint work with X.X. Chen and Mihai P\u aun \cite{CLP}, we gave an independent proof of this conjecture from a different point of view. Our proof is based on a new approximation
of geodesics, inspired by Mihai's previous work on positivity of relative canonical bundles \cite{Mihai}.

The aim of this paper is to study this convexity property of Mabuchi's functional under conic setting. 
However,
the set up of conic cscK metrics is not completely clear before. 
For example, we can consider the following equation as a candidate:
\begin{equation}
\label{cke0}
Ric( \omega_{\beta})\wedge\omega_{\beta}^{n-1} = \omega_{\beta}^n + (1-\beta) [D]\wedge\omega_{\beta}^{n-1}.
\end{equation} 
Unfortunately, the wedge product $[D]\wedge \omega_{\beta}^n$ does not make sense even in Bedford-Talyor's setting. 
So the first step is to find a proper notion of conic cscK metrics. 
We discussed three different new notions(\emph{weak conic cscK, conic cscK, strong conic cscK metrics}) in Section \ref{sec-2}, 
and each of them has some independent interests. 

Based on this new definition of conic cscK metrics, Section \ref{sec-3} is devoted to study the corresponding conic energies.
The Euler-Lagrange equation of conic Mabuchi's functional is expected to be the conic cscK metrics. However, its integral form is first given in this
section, and we proved that conic cscK metrics are indeed critical points of conic Mabuchi's functional.

Our main result is proved in Section \ref{sec-4}. 
And the precise statement is as follows. 

\begin{theorem}
\label{thm-main}[theorem (\ref{555})]
Suppose there exists a conic $\mathcal{C}^{1,\bar{1}}_{\beta}$ geodesic $\mathcal{G}$ connecting two conic K\"ahler potential $\varphi_0$ and $\varphi_1$. 
Then the conic Mabuchi's functional $\mathcal{M}_c(t)$ restricted on $\mathcal{G}$ is a convex continuous function on the closed interval $[0,1]$.
\end{theorem}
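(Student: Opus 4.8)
The plan is to prove that $t\mapsto\cM_c(t)$ is convex by exhibiting it as a rotation-invariant subharmonic function on an annulus. Concretely, I would encode the geodesic $\cG$ as an $S^1$-invariant conic potential $\Phi$ on $X\times A$, where $A=\{w\in\bC:1\le|w|\le e\}$ and $t=\log|w|\in[0,1]$, solving the degenerate conic Monge--Amp\`ere equation $(\omega+\ddbar\Phi)^{n+1}=0$ in the $\cC^{1,\bar 1}_\beta$ sense guaranteed by the hypothesis. Writing $\varphi_t=\Phi|_{\{|w|=e^t\}}$, it then suffices to show that $w\mapsto\cM_c(\varphi_{\log|w|})$ is subharmonic on the interior of $A$ and continuous up to its boundary, since a rotationally symmetric subharmonic function is automatically convex in $\log|w|=t$.

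First I would invoke the integral formula for $\cM_c$ recorded in Section~\ref{sec-3}, splitting it as $\cM_c=\cE_{\mathrm{ent}}+\cE_{\mathrm{en}}+(1-\beta)\,\cE_D$, where $\cE_{\mathrm{ent}}$ is the conic entropy $\int_X\log\big((\omega_\varphi)_\beta^n/\Omega\big)(\omega_\varphi)_\beta^n$ against a fixed conic volume $\Omega$, $\cE_{\mathrm{en}}$ collects the Aubin--Mabuchi / Deligne-type energy terms built from $\varphi$ and the reference Ricci form, and $\cE_D$ carries the divisor contribution associated to $[D]\wedge\omega_\beta^{n-1}$. The energy part $\cE_{\mathrm{en}}$ is treated exactly as in the smooth case: along $\cG$ its second $t$-derivative is the fiber integral of a nonnegative mixed Monge--Amp\`ere current, which I would extract by integration by parts; the one new point is to verify that the conic singularity of $\Phi$ along $D\times A$ produces no boundary term, which I expect to follow from the $\cC^{1,\bar 1}_\beta$ regularity established earlier. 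For $\cE_D$ I would restrict the current $[D\times A]$ to the fibers and use the geodesic equation to show that its fiberwise mass varies affinely in $t$, so that this term contributes nothing to the convexity defect.

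The heart of the argument, and the main obstacle, is the convexity of the conic entropy $\cE_{\mathrm{ent}}$. Here I would follow the Bergman-kernel route of Berman--Berndtsson \cite{BB}, adapted to the conic weight: approximate $(\omega_\varphi)_\beta^n$ by the $L^2$ Bergman measures attached to the spaces of sections of $kL$ that are square-integrable against the conic metric, so that $\cE_{\mathrm{ent}}$ is recovered as the limit of $\tfrac1k\log$ of the relative Bergman kernels. The subharmonicity in $w$ of each approximant is then the positivity of the curvature of the corresponding direct image bundle over $A$ (Berndtsson's positivity, in the form used by Yamaguchi), and a supremum or decreasing limit of subharmonic functions is subharmonic. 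The delicate step is to show that the conic factor $|s|^{2(\beta-1)}$ along $D$ does not destroy the positivity of the $L^2$-metric and that the approximation converges to the correct conic entropy; alternatively I would run the Chen--Li--P\u aun scheme \cite{CLP}, approximating $\cG$ by solutions of $\mathrm{MA}=\epsilon\,\Omega$ and using positivity of relative canonical bundles \cite{Mihai} with the conic boundary contribution controlled uniformly in $\epsilon$. In either case the genuine technical crux is uniform control, as the approximation parameter degenerates, of the error coming from the cone along $D$.

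Finally, convexity on the open interval upgrades to the closed $[0,1]$. Continuity on the interior is immediate once subharmonicity and local boundedness are in hand, while continuity at the endpoints follows from the convergence $\varphi_t\to\varphi_0,\varphi_1$ of conic potentials together with the continuity of each of $\cE_{\mathrm{en}}$ and $\cE_D$ under such convergence. I expect the endpoint continuity of $\cE_{\mathrm{ent}}$ to demand a separate semicontinuity argument, since the entropy is only lower semicontinuous in general and the boundary of $A$ is exactly where the geodesic regularity is weakest.
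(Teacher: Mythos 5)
Your overall architecture (encode the geodesic on an annulus, split $\mathcal{M}_c$ into energy and entropy, prove interior subharmonicity, handle the endpoints by semicontinuity of the entropy) does match the paper, and the route you list only as an \emph{alternative} — the Chen--Li--P\u aun approximation scheme — is in fact the one the paper runs; the Bergman-kernel route is not used. The genuine gap is that your proposal defers precisely the step that constitutes the paper's actual content. Everything new in the conic case is what you label ``the genuine technical crux'' without supplying a mechanism: the paper regularizes the $\mathcal{C}^{1,\bar{1}}_{\beta}$ geodesic by convolution \emph{with quantitative growth control} of the smoothed coefficients near $D$ (lemma (\ref{lem-grow}): $g_{\delta,1\bar{1}}\leq C\delta^{2\beta-2}$ within distance $3\delta$ of $D$, and $g_{\delta,1\bar{1}}\leq C(|s|^2+\delta^2)^{\beta-1}$ outside); it then introduces \emph{three} independent approximation parameters $(\ep,\delta,\ep')$ — the twist scale in $(\dagger_{\eta})$, the mollification scale of the geodesic, and the smoothing of the conic volume — and the key trick is to slave $\ep'=\ep'(\delta)$ (e.g.\ $\ep'=\delta^2$) so that $\mathcal{G}'_{\delta}\leq C\omega_{\ep'(\delta)}$; this balancing is exactly what makes the $C^0$ estimate (lemma (\ref{lem-C0})) and the Guenancia--P\u aun $C^2$ estimate (lemma (\ref{lem-C2})) uniform for \emph{all} angles $\beta\in(0,1)$, where no lower curvature bound is available for $\beta>1/2$. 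After that, the weak $L^p$ convergence of $e^{\phi_{t,\ep}}\omega^n/|s|^{2-2\beta}_{\psi}$ to $\mathcal{G}^n$ (lemma (\ref{lem-final})) must be upgraded by Banach--Saks (lemma (\ref{lem-533})) to strong convergence of convex combinations simultaneously at three test points, and convexity of the resulting functionals $\mathcal{M}_k$ (lemma (\ref{lem-Mk})) rests on the global positivity (\ref{positive}) inherited from \cite{CLP}. None of these mechanisms, nor substitutes for them, appears in your plan, so as written it is a program rather than a proof.

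Two further concrete problems. First, your primary route via conic Bergman kernels faces an obstruction you only partially name: even granting Berndtsson-type positivity for the singular weight $|s|^{2\beta-2}_{\psi}$, recovering the conic entropy in the limit requires fiberwise Bergman-kernel asymptotics for metrics that are merely $\mathcal{C}^{1,\bar{1}}_{\beta}$ and conic along $D$, uniformly in the fiber; this is not a routine adaptation of \cite{BB} and is not available technology here. Second, your claim that the second $t$-derivative of the energy part is ``the fiber integral of a nonnegative mixed Monge--Amp\`ere current'' is false: by lemma (\ref{lem-second}) and the geodesic equation $\mathcal{G}^{n+1}=0$, it equals $-\int_X (Ric\,\omega - dd^c\psi)\wedge\mathcal{G}^n$, which has no sign since $Ric\,\omega$ is arbitrary. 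Convexity of $\mathcal{M}_c$ holds only because the Hessian of the conic entropy dominates this term — inequality (\ref{convex1}) — so the functional cannot be split into separately convex pieces. Relatedly, there is no separate divisor energy $\mathcal{E}_D$ to handle: the paper's $H_c$ absorbs the weight $|s|^{2-2\beta}_{\psi}$, the twist $-dd^c\psi$ sits in the energy $\mathcal{E}^{Ric\omega-dd^c\psi}$, and the non-pluripolar product $\langle [D]\wedge\omega_{\varphi}^{n-1}\rangle$ vanishes identically (lemma (\ref{lem-msr})), so your affineness argument for that term is solving a problem the correct bookkeeping removes.
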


The idea of the proof is very similar to our previous work \cite{CLP}, but difficulties arise in the ``a prior estimates"
for a family of Monge-Amp\`ere equations. Especially, there is no control on the lower bound of the curvature when the angle $\beta > 1/2$.
Guenancia and P\u aun developed a new technique \cite{GP14} on $C^2$ estimate of conic K\"ahler-Einstein metrics for full angles.
However, we can not incorporate their work directly into our picture,
since there are three unrelated smooth approximations (indexed as $\ep$, $\delta$ and $\ep'$)
 for the homogeneous complex Monge-Amp\`ere equation with cone singularity. 
The new trick here is to balance the growth rate of these three approximations to make them always comparable.
And this is done by a careful investigation of the regularization of conic geodesics.

Combined with the existence result of conic $\mathcal{C}_{\beta}^{1,\bar{1}}$ geodesic \cite{CZ12}, we immediately proved that
the conic Mabuchi's functional is a convex function between any two points in the space of conic K\"ahler potentials.
However, here we need to assume the angle $\beta$ is smaller than $1/2$. 
When the angle $\beta\in [1/2, 1)$, the existence of such geodesics is not clear up to this stage.

Finally we want to point out that we only consider simple smooth divisor $D$ as the set where conic singularity develops in this paper.
And also we assumed the line bundle $L_D$ induced from this divisor is semi-positive in the sense that there exists some smooth $psh$
metric on it. However, we believe that our method can be generalized to the smooth $klt$ setting, 
and the smoothness of the $psh$ metric on the line bundle can be removed. These results will come from sequel papers. 
\\
\\
$\mathbf{Acknowledgement}$: The author is very grateful to Prof. Xiuxiong Chen, for his generous help and continuous encouragement.
And the author would like to show his great thanks to Prof. Mihai P\u aun, for lots of useful discussion and valuable suggestions. 
The author also wants to thank Prof. Ian Hambleton and Prof. McKenzie Wang for their supports from many aspects. 

Part of this paper was completed during the author's visit to SNU, from a kind invitation by Prof. Dano Kim. 
And the author also wants to thank Prof. S. Kolodziej and Dr. N.C. Nguyen for useful discussion on pluripotential theory. 
Finally, the author wants to thank Prof. Song Sun, Prof. Kai Zheng and Dr. Chengjian Yao for their kind help 
on reading the draft of this paper and giving advices.

\section{Conic canonical metrics}
\label{sec-2}
Let $X$ be a compact complex $n$-dimensional K\"ahler manifold with K\"ahler form $\omega$. %We say $X$ is a Fano manifold if its anti-canonical line bundle $-K_X$ is strictly positive.
Suppose $D$ is a simple smooth divisor on the manifold $X$, and we assume the associated line bundle $L_D\geq 0$ is semi-positive,
in the sense that there exist a smooth $psh$ metric $\psi$ on it. 
And we also assume $D$ is induced by some holomorphic section $s\in H^0(X, L_D)$. 

When the first Chern class $c_1(X)$ of the manifold $X$ has a definite sign(positive, zero or negative), the most interesting canonical metrics are K\"ahler-Einstein metrics. We can define a K\"ahler-Einstein metric $\omega_{\varphi}$ by the following equation:
\begin{equation}
\label{ke}
Ric(\omega_{\varphi}) = \lambda \omega_{\varphi},
\end{equation}
where the constant $\lambda$ equals to $1(0, or -1)$, corresponding to the case $c_1(X)>0(=0, or <0)$.
However, when we investigate more general K\"ahler manifolds, this is just a special case of the so called $cscK\ metric$ (constant scalar curvature K\"ahler metric). 
A K\"ahler metric $\omega_{\varphi}$ is a $cscK\ metric$ if it satisfies the following equation on $X$:
\begin{equation}
\label{csck}
R_{\varphi} = n\underline{R},
\end{equation}
where the constant $\underline{R}$ is a topological invariant by Stokes' theorem, and it can be calculated as follows:
$$ \underline{R} = \frac{ [c_1(X)]\cdot [\omega]^{n-1}}{[\omega]^{n}}. $$ 
Notice that this is a 4th. order, fully non-linear partial differential equation, and it is equivalent to the following family of equations:
\begin{equation}
\label{csck1}
\left\{ \begin{array}{rcl}
Ric(\omega_{\varphi}) &=& \underline{R}\omega_{\varphi} + \theta \\
\theta\wedge\omega_{\varphi}^{n-1} &=& 0,
\end{array}\right.
\end{equation}
for some smooth closed $(1,1)$ form $\theta$,
on where the metric $\omega_{\varphi}$ is non-degenerate. After wedging $\omega_{\varphi}^{n-1}$ on both sides of the first equation in (\ref{csck1}), we get another equivalent equation of $cscK\ metrics$ as follows:
\begin{equation}
\label{csck2}
Ric(\omega_{\varphi})\wedge \omega_{\varphi}^{n-1} = \underline{R}\omega_{\varphi}^n.
\end{equation}

Equation (\ref{csck}), (\ref{csck1}) and (\ref{csck2}) indeed give three different notions of cscK metrics. 
When the metric is smooth, they are all equivalent. However, under the setting of conic K\"ahler metrics, the situation becomes more subtle as we will see later.

\subsection{Conic metrics in H\"{o}lder spaces} 
Let's consider all K\"ahler metrics on $X$ in a given cohomology class. This is equivalent to the space of all K\"ahler potentials as 
$$\mathcal{H}: = \{\varphi\in\mathcal{C}^{\infty}(X)|\ \omega_{\varphi}: = \omega+dd^c\varphi>0 \}. $$
Now we want to put some singularity to these metrics along the divisor $D$. The simplest case is when $X$ is the Euclidean space $\mathbb{C}^n$, and the divisor is defined by $\{ z_1 = 0 \}$. Then we define a model conic K\"ahler metric with cone angle $\beta$ along $D$ as
$$\omega_{\beta}: = \frac{1}{|z_1|^{2-2\beta}} dz_1\wedge d\bar{z}_1 + \sum_{j=2}^n dz_{j} \wedge d\bar{z}_j.$$
Back to general K\"ahler manifolds, we can also consider this kind of singular metrics by comparing it with the model metric locally near a point $p$ on the divisor.
\begin{defn}
\label{def-conic}
a conic K\"ahler metric $\omega_{\varphi}$ on $X$ with cone angle $\beta$ along the divisor $D$ is a K\"ahler current on $X$, smooth outside of $D$, such that
it is quasi-isometric to the model metric $\omega_{\beta}$ in a neighborhood of any point $p\in D$. 
\end{defn}
Take a point $p\in D$, there is an open coordinate chart $U\ni (z_1, z_2, \cdots, z_n)$ centered at $p$, such that the divisor is defined locally as $D:=\{z_1=0 \}$. Then
the quasi-isometric condition in definition (\ref{def-conic}) can be read as $$C^{-1}\omega_{\beta} \leq \omega_{\varphi} \leq C\omega_{\beta}, $$
for some constant $C$ on $U$.

According to Donaldson \cite{Don12}, we can introduce some H\"older norms to this space of conic K\"ahler metrics. 
Let's consider in the local coordinate chart $U\ni p$ first. Writing in polar coordinate $z_1 = \rho e^{i\theta}$, we can put
$ w = r e^{i\theta}$, where $r = \rho^{\beta}$. Hence $w_1 = w$ and $w_j = z_j$ for $j>1$ give another coordinate around $p$, and we call it as $w$-coordinate
(this change of coordinates is not holomorphic!). 
Then the natural cone metric in the new coordinate can be found as 
\begin{eqnarray}
g &=& dr^2 + \beta^2r^2d\theta^2 +  \sum_{j>1} (ds_j^2 + dt_j^2)
\nonumber\\
& = &\beta^2 \rho^{2\beta-2} d\rho^2 + \beta^2 \rho^{2\beta} d\theta^2 + \sum_{j>1} (ds_j^2 + dt_j^2)
\end{eqnarray}
where $z_j = s_j + \sqrt{-1}t_j$. Although it appears to be singular in the original coordinate, this cone metric preserves the finite distance near $p$.
Moreover, the metric $g$ is exactly the induced Riemannian metric by the skew-symmetric bilinear form $\omega_{\beta}$.
Hence we introduce a $1$ form as 
\begin{eqnarray}
\epsilon: &=&  e^{i\theta}(dr + \sqrt{-1} \beta r d\theta )
=  \beta \rho^{\beta-1} dz_1 
\nonumber\\
&=& \frac{1+\beta}{2} dw + \frac{1-\beta}{2}e^{2i\theta} d\bar{w},
\end{eqnarray}
and $$ \epsilon\wedge\ol{\epsilon} = \beta^2 \rho^{2\beta -2} dz_1\wedge d\bar{z}_1 = \beta dw\wedge d\bar{w}.$$
Notice that in $w$-coordinate, the cone metric $g$ is quasi-isometric to the Euclidean metric. 
Then according to Donaldson, for $\alpha < \beta^{-1} -1$, we make the following definitions.
\begin{defn}
\label{def-1}
a real-valued function $f$ is in $\mathcal{C}^{0,\alpha,\beta}$, if $f$ is a $C^{\alpha}$ H\"older continuous function in $w$-coordinate.
\end{defn}

A $(1,0)$ form $\xi$ on $X$(in the original coordinate chart) locally can be written as 
$$ \xi = f \epsilon + \sum_{j>1} g_j dz_j. $$ 
Then we can introduce another H\"older space for $(1,0)$ forms as 
\begin{defn}
\label{def-2}
a $(1,0)$ form $\xi$ is in $\mathcal{C}^{,\alpha,\beta}$ if $f,g\in \mathcal{C}^{0,\alpha,\beta}$ and the limit of $f$ vanishes when it approaches the divisor. 
And a function $f\in\mathcal{C}^{1,\alpha,\beta} $ means $f\in \mathcal{C}^{0,\alpha,\beta}$ and $\partial f\in \mathcal{C}^{,\alpha,\beta}$. 
\end{defn}
It is equivalent to the following conditions in $w$-coordinate:
$$ \mathcal{C}^{1,\alpha,\beta} = \{ f\in\mathcal{C}^{1,\alpha}, \ \ \ \frac{ \partial f}{\partial w} (0,w') =0,\  \frac{ \partial f}{\partial \bar{w}} (0,w') =0\}.$$
This is a Banach space inherited from the usual $\mathcal{C}^{1,\alpha}$ norm in $w$-coordinates. 
Now for a $(1,1)$ form $\tau$ on $U$, we have 
$$\tau = f \epsilon\wedge \ol{\epsilon} + \sum_{j>1} (g_{1j}\epsilon\wedge d\bar{z}_j + g_{j1} dz_j\wedge \ol{\epsilon}) + \sum_{j,k>1}g_{jk}dz_j\wedge d\bar{z}_k.$$

Then let's define a new H\"older space as follows.
\begin{defn}
\label{def-3}
a $(1,1)$ form $\tau$ is in $\mathcal{C}^{,\alpha,\beta}$, if $f, g_{jk}\in\mathcal{C}^{0,\alpha,\beta}$ for all $j, k$, and $g_{1j}, g_{j1}$ converges to zero when they approach the divisor. And a function $f\in \mathcal{C}^{2,\alpha,\beta}$ if $f\in\mathcal{C}^{0,\alpha,\beta}$, $\partial f\in \mathcal{C}^{,\alpha,\beta}$ 
and $\ddbar f\in\mathcal{C}^{,\alpha,\beta}$. 
\end{defn}

%As a subspace of $\mathcal{C}^{1,\alpha,\beta}$, we can characterize this new H\"older space $\mathcal{C}^{2,\alpha,\beta}$ in $w$-coordinate as follows 
%$$\mathcal{C}^{2,\alpha,\beta} = \{\ddbar f\in \mathcal{C}^{,\alpha}, 
%\ \ \ \frac{\partial^2 f}{\partial w\partial \bar{w}_j}(0, w') =0,\ \frac{\partial^2 f}{\partial w_j\partial\bar{w}}(0,w') =0 \}. $$

Again this is a Banach space by restricted $\mathcal{C}^{2,\alpha}$ norm from $w$-coordinates.
Finally, we can further define a higher regularity class \cite{CZ12} as follows 
$$\mathcal{C}^{3,\beta}: = \{f\in\mathcal{C}^{2,\alpha,\beta} | \ \ \text{all the third derivatives of $f$ is bounded w.r.t $\omega_{\beta}$}.  \} $$

According to Calami and Zheng \cite{CZ12}, this higher regularity space $\mathcal{C}^{3,\beta}$ is useful, when we consider the existence of geodesics connecting
points in the space of conic K\"ahler metrics. And we will give further details when this result is mentioned later.

\subsection{Conic cscK metrics}
Our goal is to give a proper notion of cscK metric under the conic setting. The cheapest way is to forget about the singular part of the manifold, and define it as the usual cscK metric on the regular part. 
\begin{defn}
\label{def-ccscK0}
a weak conic cscK metric is a conic K\"ahler metric  $\omega_{\varphi}$ with cone angle $\b$ along the divisor $D$, satisfying the following equation on $X-D$:
\begin{equation}
\label{ccscK0}
R_{\varphi} = c,
\end{equation}
where $c$ is a proper constant. 
\end{defn}
This constant $c$ should only depend on the K\"ahler class$( [\omega],  [\omega\vert_{D}] )$, the first Chern class$(c_1(X), c_1(D))$, and the class of the divisor $D$.
However, this is difficult to know, even if $\varphi\in\cC^{2,\a,\b}$. In general, Stokes' theorem(integration by parts) fails to be applied to the LHS of equation of (\ref{csck2}) without further regularity assumptions.

In order to overcome this difficulty, we are going to invoke some pluripotential theory from the work of Boucksom-Eyssidieux-Guedj-Zeriahi (\cite{BEGZ10}). 
Recall that two $quasi$-$psh$ functions $\varphi$ and $\psi$ have the same singular type 
if $\varphi = \psi  + O(1)$. And the singular type of a positive closed $(1,1)$ current is determined by its potential. 

\begin{lemma}
\label{lem-ric}
Let $\omega_{\varphi}$ be a conic K\"ahler metric, and $\Omega$ be another conic K\"ahler metrics with $\mathcal{C}^{2,\alpha,\beta}$ potentials. 
Assume that on $X-D$, we have 
$$ Ric(\omega_{\varphi})\geq -C\Omega, $$ 
for some large constant $C$, such that $Ric(\omega) + C\omega >0$.
Then we can extend the $(1,1)$ form 
$$Ric(\omega_{\varphi}) + C\Omega$$
as a positive closed current on $X$.
Moreover, the function $\phi_{ric}: = -\log(\omega_{\varphi}^n / \omega^n)$ determines the singularity type of this positive current. 
\end{lemma}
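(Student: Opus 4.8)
The plan is to reduce the statement to a removable-singularity problem for a quasi-plurisubharmonic potential. Writing $\Omega = \omega + dd^c \psi$ with $\psi \in \mathcal{C}^{2,\alpha,\beta}$, and using the global identity $Ric(\omega_{\varphi}) = Ric(\omega) + dd^c \phi_{ric}$ valid on $X - D$ (where $\phi_{ric} = -\log(\omega_{\varphi}^n/\omega^n)$), I would rewrite
\[ Ric(\omega_{\varphi}) + C\Omega = \eta + dd^c\big(\phi_{ric} + C\psi\big), \]
where $\eta := Ric(\omega) + C\omega$ is a \emph{smooth} K\"ahler form on $X$ by the hypothesis $Ric(\omega) + C\omega > 0$. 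Setting $h := \phi_{ric} + C\psi$, the curvature assumption $Ric(\omega_{\varphi}) \geq -C\Omega$ on $X - D$ says precisely that $\eta + dd^c h \geq 0$ there, i.e. $h$ is $\eta$-plurisubharmonic on $X - D$. Thus extending $Ric(\omega_{\varphi}) + C\Omega$ as a positive closed current on $X$ is equivalent to extending $h$ as an $\eta$-psh function across $D$.

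The key analytic input is an upper bound for $h$ near $D$. First I would exploit the quasi-isometry $\omega_{\varphi} \geq C^{-1}\omega_{\beta}$ from Definition \ref{def-conic} together with the volume asymptotics of the model metric, $\omega_{\beta}^n \sim |z_1|^{2\beta-2}\,\omega^n$, to obtain $\omega_{\varphi}^n/\omega^n \gtrsim |z_1|^{2\beta-2}$ in a coordinate chart around any $p \in D$. Taking $-\log$ gives $\phi_{ric} \leq (2-2\beta)\log|z_1| + O(1)$; since $0 < \beta < 1$ the leading coefficient is positive and $\log|z_1| \leq 0$ near $D$, so $\phi_{ric}$ is bounded \emph{above} near $D$ (indeed $\phi_{ric} \to -\infty$). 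As $\psi$ is globally bounded, $h$ is likewise bounded above in a neighborhood of $D$.

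With this in hand, the extension is classical pluripotential theory. Since $D = \{s = 0\}$ for $s \in H^0(X, L_D)$, it is pluripolar, and an $\eta$-psh function on $X - D$ that is locally bounded above near the pluripolar set $D$ extends uniquely to an $\eta$-psh function on $X$ by its upper-semicontinuous regularization $\tilde h := \big(\limsup_{X-D} h\big)^*$. Then $\eta + dd^c \tilde h$ is a positive closed $(1,1)$-current on $X$ restricting to $Ric(\omega_{\varphi}) + C\Omega$ on $X - D$, which is the desired extension. For the last claim, the global potential of this current is $\tilde h = \phi_{ric} + C\psi$, and since $C\psi = O(1)$ we have $\tilde h = \phi_{ric} + O(1)$; hence the current has the same singular type as $\phi_{ric}$, so $\phi_{ric}$ determines the singularity type.

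I would expect the main obstacle to be the upper estimate of $\phi_{ric}$: one must convert the two-sided quasi-isometry into a one-sided control of the determinant $\omega_{\varphi}^n/\omega^n$, and verify it is genuinely bounded above (not merely comparable) so that the removable-singularity theorem applies. The potential-theoretic extension itself is routine once $h$ is known to be bounded above across the pluripolar divisor, and the canonical $\limsup$-regularization guarantees that no spurious singular mass is created along $D$.
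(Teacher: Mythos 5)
Your proposal is correct and follows essentially the same route as the paper: both rewrite $Ric(\omega_{\varphi})+C\Omega$ as $\bigl(Ric(\omega)+C\omega\bigr)+dd^c\bigl(\phi_{ric}+C\psi\bigr)$, establish the upper bound $\phi_{ric}\leq (1-\beta)\log|z_1|^2+O(1)$ near $D$ from the quasi-isometry with the model metric $\omega_{\beta}$, extend the resulting quasi-psh potential across the pluripolar divisor, and read off the singularity type from $\phi_{ric}$ since $\psi$ is bounded. The only (immaterial) differences are that you use just the one-sided bound $\omega_{\varphi}^n\gtrsim\omega_{\beta}^n$ where the paper quotes the two-sided quasi-isometry, and you spell out the $\limsup$-regularization step that the paper leaves implicit.
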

\begin{proof}
Write $\Omega = \omega + dd^c\psi$, for some $\psi\in \mathcal{C}^{2,\alpha,\beta}$. We can re-write the positivity condition $Ric(\omega_{\varphi}) + C\Omega \geq 0$ as
\begin{equation}\label{ric}
\Gamma + dd^c( -\log\frac{\omega_{\varphi}^n}{\omega^n} + C\psi) \geq 0, 
\end{equation}
where $\Gamma = Ric(\omega) + C\omega > 0$. Notice that the potential $\psi$ is continuous on $X$, and we claim that the function $\phi_{ric}$ is bounded from above near the divisor. Hence the new defined function $\Psi: = \psi + \phi_{ric}$ can be extended as a $\Gamma$-$psh$ function across the divisor.
Writing $\phi_{ric} = \Psi - \psi$, we can extend $\phi_{ric}$ across the divisor as the difference of two $quasi$-$psh$ functions, and it is uniformly bounded from above since 
$\psi$ is a bounded function on $X$. Finally the singularity type of $\Psi$ is determined by $\phi_{ric}$ for the same reason.

Now we are going to prove the claim. It's enough to consider the upper bound of $\phi_{ric}$ in a local coordinate chart of a point $p$ on the divisor $D$. Then by the local isometric property of conic K\"ahler metric, we can compare $\omega_{\varphi}$ to the model metric $\omega_{\beta}$ in this neighborhood. That is, there exist a constant $C$, such that 
$$C^{-1} \omega_{\beta}^n \leq \omega_{\varphi}^n \leq C\omega_{\beta}^n. $$
But in the local coordinate chart, we have 
$$-\log\frac{\omega_{\beta}^n}{\omega^n} \sim (1-\beta)\log|z_1|^2 + O(1),$$
which is obvious bounded from above. Therefore, the claim follows. 

\end{proof}

From now on, we can consider the space of all conic K\"ahler metrics with Ricci curvature bounded from below. 
\begin{defn}\label{def-ric lower}
a conic K\"ahler metric $\omega_{\varphi}$ is in the family $\mathcal{R}$ if there exists some(may depend on $\omega_{\varphi}$) 
conic K\"ahler metric $\Omega$ with $\mathcal{C}^{2,\alpha,\beta}$ potential such that 
\begin{equation}
\label{ric lower bound }
Ric(\omega_{\varphi})\geq -C\Omega
\end{equation}
for some large constant $C$. 
\end{defn}

This extra positivity imposing on the Ricci curvature is equivalent to say that the Ricci potential $\phi_{ric}$ is a $quasi$-$psh$ function on $X$.
Moreover, observe that the Ricci potential $\phi_{ric}$ and the K\"ahler potential $\varphi$ for a $\cC^{2,\a,\b}$ conic metric $\omega_{\varphi}$ 
both have small unbounded locus \cite{BEGZ10}, since the smooth divisor $D$ is a complete pluripolar closed subset of $X$.  
Then we can invoke the integration by parts formula (Theorem 1.14, \cite{BEGZ10}) to compute the topological constant. 

\begin{prop}
\label{IBP0}
Let $\omega_{\varphi_1}, \omega_{\varphi_2}\in\mathcal{R}$ are two conic K\"ahler metrics with $\mathcal{C}^{2,\alpha,\beta}$ potentials. 
Then we have the following equation:
\begin{equation}
\label{IBP}
\int_{X-D} Ric(\omega_{\varphi_1})\wedge \omega_{\varphi_1}^{n-1} = \int_{X-D} Ric(\omega_{\varphi_2})\wedge \omega_{\varphi_2}^{n-1}
\end{equation}
\end{prop}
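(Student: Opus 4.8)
The plan is to show that the difference $\int_{X-D} Ric(\omega_{\varphi_1})\wedge\omega_{\varphi_1}^{n-1} - \int_{X-D} Ric(\omega_{\varphi_2})\wedge\omega_{\varphi_2}^{n-1}$ vanishes, by rewriting it as a sum of pairings of $dd^c$-exact currents against closed positive currents with small unbounded locus, and then invoking the integration by parts formula of \cite{BEGZ10} to annihilate all boundary contributions along $D$. Write $\omega_i = \omega_{\varphi_i}$ and $R_i = Ric(\omega_i)$. First I would telescope
\begin{equation}
R_1\wedge\omega_1^{n-1} - R_2\wedge\omega_2^{n-1} = (R_1 - R_2)\wedge\omega_1^{n-1} + R_2\wedge(\omega_1^{n-1}-\omega_2^{n-1}),
\end{equation}
and record the two elementary identities $R_1 - R_2 = dd^c h$ with $h = -\log(\omega_1^n/\omega_2^n)$, and $\omega_1^{n-1} - \omega_2^{n-1} = dd^c u\wedge S$ with $u = \varphi_1 - \varphi_2$ and $S = \sum_{k=0}^{n-2}\omega_1^k\wedge\omega_2^{n-2-k}$. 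Note the telescoping is arranged so the first term only involves $\omega_1$ and the second only involves $R_2$ and its own dominating metric, so no common choice of $\Omega$ for the two metrics is required.

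The key preliminary observation is that both ``potentials'' appearing here are globally bounded. Since $\omega_1$ and $\omega_2$ are conic K\"ahler metrics with the \emph{same} cone angle $\beta$ along $D$, each is quasi-isometric to the model metric $\omega_\beta$, so the volume ratio $\omega_1^n/\omega_2^n$ is bounded above and below near $D$; hence $h\in L^\infty(X)$. Likewise $u = \varphi_1 - \varphi_2\in\mathcal{C}^{2,\alpha,\beta}$ is bounded on $X$. This is precisely where equal cone angles are essential: it is what turns the \emph{a priori} singular factor $h$ into a bounded function, so that its $dd^c$ pairs trivially against closed currents.

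Next I would promote the remaining factors to honest global currents. By Lemma \ref{lem-ric} each $R_i$ extends across $D$ as a difference $R_i = \Theta_i - C\Omega_i$ of positive closed currents, where $\Theta_i = Ric(\omega_i) + C\Omega_i\geq 0$ and $\Omega_i$ is a conic K\"ahler metric with $\mathcal{C}^{2,\alpha,\beta}$ potential; both these factors, as well as the non-pluripolar products $\omega_1^{n-1}$ and $S$, are closed positive currents whose unbounded locus sits inside the complete pluripolar set $D$, hence is small in the sense of \cite{BEGZ10}. Because $h$ and $u$ are bounded and the complementary factors are (differences of) closed positive currents with small unbounded locus, Theorem 1.14 of \cite{BEGZ10} applies with no boundary terms, giving
\begin{equation}
\int_{X-D} dd^c h\wedge\omega_1^{n-1} = \int_{X-D} h\, dd^c(\omega_1^{n-1}) = 0,
\end{equation}
since $\omega_1^{n-1}$ is closed, and likewise $\int_{X-D} dd^c u\wedge(R_2\wedge S) = 0$ after splitting $R_2 = \Theta_2 - C\Omega_2$ and using that $\Theta_2\wedge S$ and $\Omega_2\wedge S$ are closed. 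Summing the two vanishing terms yields (\ref{IBP}).

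The hard part will be the final step: verifying honestly that the hypotheses of the \cite{BEGZ10} integration by parts theorem are met — that the non-pluripolar products $\omega_1^{n-1}$, $S$ and $\Theta_i\wedge S$ are well defined with small unbounded locus, that $\int_{X-D}$ of these wedge products (smooth on $X\setminus D$) genuinely coincides with the non-pluripolar integral over $X$, and above all that no boundary mass escapes along $D$ as one integrates by parts. All of this ultimately rests on the boundedness of $h$ and $u$ together with the equal-angle quasi-isometry; controlling the singular weight $\phi_{ric}$ supplied by Lemma \ref{lem-ric} is what keeps the Ricci factor inside the class of currents for which \cite{BEGZ10} is applicable.
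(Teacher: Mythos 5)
Your proposal is correct and follows essentially the same route as the paper: both bound the Ricci-potential difference $h=\phi_{1,ric}-\phi_{2,ric}$ via the equal-angle quasi-isometry (Lemma \ref{lem-ric}), bound $u=\varphi_1-\varphi_2$ trivially, and then kill both telescoped terms by Theorem 1.14 of \cite{BEGZ10}; your identities $\int (R_1-R_2)\wedge\omega_1^{n-1}=0$ and $\int R_2\wedge dd^c u\wedge S=0$ are exactly the paper's equations (\ref{IBP1}) and (\ref{IBP2}). The only difference is cosmetic: you write the general-$n$ telescoping sum $S=\sum_k \omega_1^k\wedge\omega_2^{n-2-k}$ explicitly (and avoid choosing a common dominating metric $\Omega$), whereas the paper spells out $n=2$ and remarks that the general case follows similarly.
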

\begin{proof}
Notice that for two conic K\"ahler metrics $\omega_{\varphi_1}$ and $\omega_{\varphi_2}$, the following is always true: 
there exists some constant $K>0$, such that the difference of their $Ricci$ potentials is globally bounded by $K$, i.e. 
\begin{equation}
\label{ricbdd}
|\phi_{1,ric} - \phi_{2,ric}| = \left| \log\left(\frac{\omega_{\varphi_1}^n}{\omega_{\varphi_2}^n}\right) \right| < K,
\end{equation}
on $X$. This is true because these two Ricci potentials have the same singularity type near the divisor according to lemma (\ref{lem-ric}). 
In fact, in a local coordinate chart $U$ near a point $p\in D$, we can compare the two conic metrics with the model metric $\omega_{\beta}$ in this neighborhood. 
That is, there exist a constant $C$, such that 
$$C^{-1} \omega_{\beta}^n \leq \omega_{\varphi_i}^n \leq C\omega_{\beta}^n,  $$
where $i=1,2$. Writing 
$$u = \phi_{ric, 1} - \phi_{ric, 2} = ( \log \omega_{\varphi_1}^n - \log \omega_{\beta}^n) -( \log \omega_{\varphi_2}^n - \log \omega_{\beta}^n), $$
we can see $|u|\leq 2C$ on $U$, and hence bounded by some constant $K$ on $X$. 

Then we will illustrate the idea in the case when $n=2$, and the general case follows similarly.
Let's first claim that the following integration by parts equation holds:
\begin{equation}\label{IBP1}
\int_{X-D} (Ric(\omega_{\varphi_1}) - Ric(\omega_{\varphi_2}))\wedge\omega_{\varphi_i} = 0,
\end{equation}
where $i=1, 2$. Let's define two new positive currents as $T_i = Ric(\omega_{\varphi_i}) + C\Omega$. Here we can always assume the constant $C$ is big enough such that $Ric(\omega) + C\omega >0$. 
Writing $T_1 = \theta + dd^c\psi_1$ and $T_2= \theta + dd^c\psi_2$, where $\theta$ is a smooth closed $(1,1)$ form in the class $-c_1(X) + C[\Omega]$. 
Then equation (\ref{IBP}) is reduced to show 
$$ \int_{X-D} (dd^c\psi_1 - dd^c\psi_2)\wedge\omega_{\varphi_i} = 0. $$
Notice that $\psi_1$ and $\psi_2$ are two $quasi$-$psh$ functions on $X$, which are smooth on $X-D$. And their difference 
$u = \psi_1 - \psi_2 $ is uniformly bounded on $X$, because their currents have the same singularity type thanks to lemma (\ref{lem-ric}) and 
equation (\ref{ricbdd}). Then equation (\ref{IBP1}) follows from Theorem 1.14 of \cite{BEGZ10}. 

Next the following equation follows from a similar argument:
\begin{equation}
\label{IBP2}
\int_{X-D} Ric(\omega_{\varphi_i})\wedge ( \omega_{\varphi_1} - \omega_{\varphi_2} ) = 0,
\end{equation}
for $i=1,2$. Combine equations (\ref{IBP1}) and (\ref{IBP2}) together, and we proved equation (\ref{IBP}) when $n=2$.
\end{proof}

Based on this integration by parts, we can introduce the following definition.

\begin{defn}
a conic cscK metric with cone angle $\beta$ along the divisor $D$ is a conic K\"ahler metric $\omega_{\varphi}$ with $\mathcal{C}^{2,\alpha,\beta}$ potential, 
such that $\omega_{\varphi}\in \mathcal{R}$, and 
$$ R_{\varphi} = c(\beta), $$ on $X-D$.
\end{defn}

\begin{corollary}
\label{cor-ccsck1}
The constant $c(\beta)$ in the definition of conic cscK metrics is a topological invariant. 
\end{corollary}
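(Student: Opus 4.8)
The goal is to show that the constant $c(\beta)$ appearing in the definition of a conic cscK metric depends only on topological data, and in particular does not depend on the choice of the particular conic cscK metric $\omega_\varphi$ within the class. The plan is to relate $c(\beta)$ to the integral $\int_{X-D} Ric(\omega_\varphi)\wedge\omega_\varphi^{n-1}$ via the defining equation $R_\varphi = c(\beta)$, and then to invoke Proposition \ref{IBP0} to conclude that this integral is independent of the metric.

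First I would integrate the scalar curvature equation $R_\varphi = c(\beta)$ against the volume form. Recall that for a Kähler metric the scalar curvature satisfies $R_\varphi \, \omega_\varphi^n = n\, Ric(\omega_\varphi)\wedge\omega_\varphi^{n-1}$ on $X-D$, where the metric is smooth and non-degenerate. Integrating over $X-D$ and using $R_\varphi = c(\beta)$ gives
\begin{equation}
\label{scalar-int}
c(\beta)\int_{X-D}\omega_\varphi^n = n\int_{X-D} Ric(\omega_\varphi)\wedge\omega_\varphi^{n-1}.
\end{equation}
The left-hand integral $\int_{X-D}\omega_\varphi^n = \int_X \omega_\varphi^n = [\omega]^n$ is just the cohomological volume, since the divisor $D$ has measure zero and the conic metric has bounded volume form comparable to $\omega_\beta^n$. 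Hence $c(\beta)$ is determined once the right-hand side is shown to be a topological quantity.

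Next, the key point is that the right-hand side of \eqref{scalar-int} is exactly the integral appearing in Proposition \ref{IBP0}. Since $\omega_\varphi\in\mathcal{R}$ has $\mathcal{C}^{2,\alpha,\beta}$ potential, that proposition tells us that $\int_{X-D} Ric(\omega_\varphi)\wedge\omega_\varphi^{n-1}$ takes the same value for any two such conic Kähler metrics in $\mathcal{R}$. Therefore this integral is independent of the representative and defines a single number attached to the fixed cohomological and topological data $([\omega], c_1(X), [D])$. Combining this with \eqref{scalar-int} and the fact that $\int_X\omega_\varphi^n = [\omega]^n$ is purely cohomological, we deduce
\begin{equation}
\label{ctop}
c(\beta) = \frac{n\int_{X-D} Ric(\omega_\varphi)\wedge\omega_\varphi^{n-1}}{[\omega]^n},
\end{equation}
and the right-hand side does not depend on the choice of conic cscK metric.

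The main obstacle I anticipate is not the cohomological computation itself but making rigorous the identity $R_\varphi\,\omega_\varphi^n = n\, Ric(\omega_\varphi)\wedge\omega_\varphi^{n-1}$ integrated over $X-D$, together with the claim that the curvature integral genuinely computes a topological pairing rather than merely being metric-independent. The pointwise identity is valid on the smooth open set $X-D$, so the delicate issue is integrability near $D$ and the justification that Stokes-type arguments apply only through Proposition \ref{IBP0}, rather than naively; this is precisely why the membership $\omega_\varphi\in\mathcal{R}$ and the $\mathcal{C}^{2,\alpha,\beta}$ regularity are invoked. Finally, one should remark that \eqref{ctop} shows $c(\beta)$ agrees with the expected topological constant expressed through $c_1(X)$, $c_1(D)$ and the relevant cohomology classes, which matches the expression anticipated in the discussion following Definition \ref{def-ccscK0}.
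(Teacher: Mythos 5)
Your proposal is correct and follows essentially the same route as the paper: express $c(\beta)$ as the ratio of $\int_{X-D} Ric(\omega_\varphi)\wedge\omega_\varphi^{n-1}$ to the (cohomological) volume and invoke Proposition (\ref{IBP0}) for metric-independence of the numerator. You are in fact slightly more careful than the paper's own two-line proof, which omits the factor $n$ from the identity $R_\varphi\,\omega_\varphi^n = n\,Ric(\omega_\varphi)\wedge\omega_\varphi^{n-1}$ and does not comment on why $\int_{X-D}\omega_\varphi^n$ is itself metric-independent.
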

\begin{proof}
Suppose $\omega_{\varphi_1}$ and $\omega_{\varphi_2}$ are two conic cscK metrics with cone angle $\beta$ along $D$. 
They are both in the family $\mathcal{R}$. 
Then it's enough to prove equation (\ref{IBP}) since the constant can be written as
$$c_i(\beta) = \frac{\int_{X-D} Ric(\omega_{\varphi})\wedge\omega_{\varphi}^{n-1}}{\int_{X-D}\omega_{\varphi}^n},$$ for $i=1,2$. 
But this is true thanks to Proposition (\ref{IBP0}). 
\end{proof}

This definition is a kind of generalization of equation (\ref{csck}) from smooth cscK metrics, but we can also generalize it from equation (\ref{csck1}). 
%\begin{defn}
%label{def-ccsck1}
%a conic cscK metric with cone angle $\beta$ along the divisor $D$ is a conic K\"ahler metric $\omega_{\varphi}\in \mathcal{R}$ with $\mathcal{C}^{2,\alpha,\beta}$ potential, such that 
%it satisfies the following equation in the sense of currents:
%\begin{equation}
%\label{ccsck1.5}
%Ric(\omega_{\varphi})\wedge\omega_{\varphi}^{n-1}  = c(\beta) \omega_{\varphi}^n + (1-\beta) [D]\wedge\omega_{\varphi}^{n-1}. 
%\end{equation}
%\end{defn}

\begin{defn}
\label{def-ccsck2}
a strong conic cscK metric with cone angle $\beta$ along the divisor $D$ is a conic K\"ahler metric $\omega_{\varphi}$ with $\mathcal{C}^{2,\alpha,\beta}$ potential, such that it satisfies the following family of equations in the sense of currents
\begin{equation}
\label{ccsck2}
\left\{ \begin{array}{rcl}
Ric(\omega_{\varphi}) &=& c(\beta) \omega_{\varphi} + \Theta + (1-\beta)[D] \\
\Theta\wedge\omega_{\varphi}^{n-1} &=& 0,
\end{array}\right.
\end{equation}
where $\Theta$ is a closed $(1,1)$ current, such that $$\Theta=\theta+ dd^c\psi, $$ where $\theta$ is a smooth closed $(1,1)$ form on $X$ in the same cohomology class, and $\psi\in \mathcal{C}^{2,\alpha,\beta}$. 
\end{defn}

\begin{corollary}
\label{cor-ccsck}
The constant $c(\beta)$ in the definition of strong conic cscK metrics is a topological invariant. 
\end{corollary}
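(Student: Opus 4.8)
The plan is to follow the argument of Corollary \ref{cor-ccsck1} almost verbatim; the single new ingredient is that membership in the family $\mathcal{R}$ is built into the definition of a conic cscK metric but \emph{not} into that of a strong conic cscK metric, so I must first recover it. Thus I would begin by showing that every strong conic cscK metric $\omega_\varphi$ lies in $\mathcal{R}$ (Definition \ref{def-ric lower}). On $X-D$ the current $[D]$ vanishes, so the first line of (\ref{ccsck2}) becomes the smooth identity $Ric(\omega_\varphi)=c(\beta)\omega_\varphi+\theta+dd^c\psi$ there. Since $\psi\in\mathcal{C}^{2,\alpha,\beta}$ forces $dd^c\psi\in\mathcal{C}^{,\alpha,\beta}$, the form $\theta+dd^c\psi$ has coefficients bounded against the model metric $\omega_\beta$ up to the divisor; as $\omega_\varphi$ is itself quasi-isometric to $\omega_\beta$ and carries a $\mathcal{C}^{2,\alpha,\beta}$ potential, I may take $\Omega=\omega_\varphi$ and a large constant $C$ to conclude $Ric(\omega_\varphi)\geq -C\omega_\varphi$ on $X-D$, which is exactly the defining condition of $\mathcal{R}$.

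Next I would extract a scalar identity on $X-D$. Wedging the first equation of (\ref{ccsck2}) with $\omega_\varphi^{n-1}$ there and invoking the second equation $\Theta\wedge\omega_\varphi^{n-1}=0$ to cancel the $\Theta$ term yields
\begin{equation}
Ric(\omega_\varphi)\wedge\omega_\varphi^{n-1}=c(\beta)\,\omega_\varphi^n\qquad\text{on }X-D.
\end{equation}
Because the conic volume form is integrable and puts no mass on $D$, integration gives $\int_{X-D}\omega_\varphi^n=[\omega]^n$, and hence
\begin{equation}
c(\beta)=\frac{\int_{X-D}Ric(\omega_\varphi)\wedge\omega_\varphi^{n-1}}{[\omega]^n}.
\end{equation}

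To finish, take two strong conic cscK metrics $\omega_{\varphi_1}$ and $\omega_{\varphi_2}$ sharing the cone angle $\beta$. By the first step both lie in $\mathcal{R}$, so Proposition \ref{IBP0} gives $\int_{X-D}Ric(\omega_{\varphi_1})\wedge\omega_{\varphi_1}^{n-1}=\int_{X-D}Ric(\omega_{\varphi_2})\wedge\omega_{\varphi_2}^{n-1}$; the denominator $[\omega]^n$ is a cohomological pairing, so the formula above forces $c_1(\beta)=c_2(\beta)$. I expect the crux to be precisely this first step: one must verify that the $\mathcal{C}^{2,\alpha,\beta}$ regularity of $\psi$ genuinely controls $\theta+dd^c\psi$ against $\omega_\beta$ uniformly up to $D$, since it is only this Ricci lower bound that licenses the integration-by-parts of Proposition \ref{IBP0}. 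Once $\mathcal{R}$-membership is secured, the remainder is the same bookkeeping as in Corollary \ref{cor-ccsck1}.
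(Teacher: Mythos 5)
Your proposal is correct and follows essentially the same route as the paper: both arguments reduce the statement to Proposition (\ref{IBP0}) by first verifying that strong conic cscK metrics lie in $\mathcal{R}$ (you take $\Omega=\omega_\varphi$ itself, the paper takes the average $(\omega_{\varphi_1}+\omega_{\varphi_2})/2$, but Definition (\ref{def-ric lower}) permits either), and both then use $\Theta\wedge\omega_\varphi^{n-1}=0$ together with the vanishing of the $[D]$-term (you restrict to $X-D$, the paper phrases this via the non-pluripolar products of Lemmas (\ref{lem-msr}) and (\ref{lem-well})) to obtain the same integral formula for $c(\beta)$.
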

\begin{proof}
It's enough to prove that $\omega_{\varphi_1}$ and $\omega_{\varphi_2}$ are both in the family $\mathcal{R}$.
This is because we can estimate the lower bound of their Ricci currents as 
$$ Ric(\omega_{\varphi_i}) \geq c_i(\beta)\omega_{\varphi_i} + \Theta_i \geq -C\Omega, $$
where $\Omega =(\omega_{\varphi_1} + \omega_{\varphi_2}) /2$ is a conic K\"ahler metric with $\mathcal{C}^{2,\alpha,\beta}$ potential, and $C$ is a large constant. 
Now thanks to lemma (\ref{lem-msr}) and lemma (\ref{lem-well}), we can form the non-pluripolar product on both sides of the first equation of (\ref{ccsck2}) as 
$$ Ric(\omega_{\varphi})\wedge\omega_{\varphi}^{n-1}  = c(\beta) \omega_{\varphi}^n + (1-\beta) [D]\wedge\omega_{\varphi}^{n-1}. $$
Then the result follows from Proposition (\ref{IBP0}) again.
\end{proof}

Notice that the first equation in (\ref{ccsck2}) is in fact a twisted conic K\"ahler-Einstein equation, so it is equivalent to the following Monge-Amp\`ere equation:
\begin{equation}
(\omega+dd^c\varphi)^n = \frac{e^{h - c(\beta)\varphi} \omega^n}{|s|^2e^{-\psi}},
\end{equation} 
with normalization condition $\int_X \frac{e^{h - c(\beta)\varphi}\omega^n}{|s|^2e^{-\psi}} = 1 $. Here $\psi$ is a smooth $psh$ metric for the line bundle $L_D$, and the function $h = h_{\omega,\Theta} \in \mathcal{C}^{2,\alpha,\beta}$ solves the following equation:
\begin{equation}
\label{h}
Ric(\omega) = c(\beta)\omega + dd^c\psi + \Theta + dd^c h_{\omega,\Theta}.
\end{equation}

\begin{rem}
\label{rem-1}
Strong conic cscK metrics are certainly conic cscK metrics. However, it is interesting to know if a conic cscK metric is strong conic cscK. 
\end{rem}

\subsection{distributional view}

Another attempt is motivated by the definition of conic K\"ahler-Einstein metrics. A conic K\"ahler-Einstein metric with cone angle $\beta$ along the divisor $D$ is 
a conic K\"ahler metric $\omega_{\varphi_{\beta}}$ with $\mathcal{C}^{2,\alpha,\beta}$ potential, 
satisfying the following equation on $X$ in the sense of currents:
\begin{equation}
\label{cke}
Ric( \omega_{\varphi_{\beta}}) = c(\beta)\omega_{\varphi_{\beta}} + (1-\beta) [D],
\end{equation} 
where $[D]$ is the integration current of the divisor $D$, and the constant $c(\beta)$ is determined by the correct cohomology condition. Now we can 
wedge a positive $(n-1, n-1)$ form $\omega_{\varphi_{\beta}}^{n-1}$ on both sides of equation (\ref{cke}):
\begin{equation}
\label{ccsck1}
Ric(\omega_{\varphi_{\beta}})\wedge\omega_{\varphi_{\beta}}^{n-1}  = c(\beta) \omega_{\varphi_{\beta}}^n + (1-\beta) [D]\wedge\omega_{\varphi_{\beta}}^{n-1}. 
\end{equation}

This equation is a natural generalization of conic K\"ahler-Einstein equation. However, it is not clear that the wedge product 
$$[D]\wedge\omega_{\varphi}^{n-1} $$ is well defined in prior. 

Recall that the non-pluripolar product for unbounded $psh$ functions is defined in \cite{BEGZ10}. Generally speaking, we can take the trivial extension of the usual wedge product across the pluripolar set, provided it always has local finite mass near the pluripolar set. 
\begin{lemma}
\label{lem-msr}
The non-pluripolar product $\langle [D]\wedge\omega_{\varphi}^{n-1} \rangle$ is well defined for any conic K\"ahler metric $\omega_{\varphi}$ with 
$\mathcal{C}^{2,\alpha,\beta}$ potential. In particular, the non-pluripolar product is a zero current on $X$.
\end{lemma}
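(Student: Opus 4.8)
The plan is to show that the non-pluripolar product $\langle [D]\wedge\omega_{\varphi}^{n-1}\rangle$ is well defined and vanishes by working locally near a point $p\in D$ and using the defining property of the non-pluripolar product from \cite{BEGZ10}: it is obtained by taking the usual Bedford--Taylor product on the open set where all potentials are locally bounded, and then extending by zero across the pluripolar locus provided the resulting measures have uniformly bounded local mass. Here the divisor $D=\{s=0\}$ is the relevant pluripolar set, so the whole point is that the trivial extension produces the zero current.

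First I would set up local coordinates $(z_1,\dots,z_n)$ near $p$ in which $D=\{z_1=0\}$, and recall that the current of integration $[D]$ can be written as $[D] = dd^c\bigl(\tfrac{1}{2\pi}\log|z_1|^2\bigr)$ (up to normalization) in the sense of the Lelong--Poincar\'e formula. The potential $\log|s|^2$ of $[D]$ is a quasi-psh function whose unbounded locus is exactly $D$, which is a complete pluripolar set. Since $\omega_\varphi$ has a $\mathcal{C}^{2,\alpha,\beta}$ potential, on $X-D$ its potential is bounded (indeed smooth), so on the open set $X-D$ the product $[D]\wedge\omega_\varphi^{n-1}$ coincides with the genuine Bedford--Taylor product of bounded-potential currents. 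But restricted to $X-D$ the current $[D]$ is identically zero, because $\log|z_1|^2$ is pluriharmonic there; hence the classical product already vanishes on $X-D$.

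Next I would verify the local finite-mass condition that licenses the trivial extension: one must check that the masses of the approximating products $[D]\wedge(\omega_\varphi|_{\{-\log|s|^2<k\}})^{n-1}$ stay uniformly bounded as $k\to\infty$. Since $\omega_\varphi$ is quasi-isometric to the model cone metric $\omega_\beta$ near $D$ by Definition \ref{def-conic}, one has $\omega_\varphi^{n-1}\leq C\,\omega_\beta^{n-1}$, and the wedge of $[D]$ with $\omega_\beta^{n-1}$ has finite mass on a neighborhood of $p$ (this is an elementary computation, since $\omega_\beta$ restricts to a smooth finite form transverse to $D$). This finite-mass bound guarantees that the non-pluripolar product exists as a well-defined positive measure, and by construction it charges no mass on the pluripolar set $D$; combined with the vanishing on $X-D$ already observed, the product is the zero current on all of $X$.

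The main obstacle, and the step deserving the most care, will be the finite-mass verification: while $[D]\wedge\omega_\beta^{n-1}$ is morally finite, one must justify that the quasi-isometry comparison with $\omega_\beta$ is strong enough to control the truncated Monge--Amp\`ere masses uniformly, since the $(n-1)$-fold wedge of a singular conic metric with a current of integration is precisely the kind of product that fails to make sense under Bedford--Talyor as the author noted for \eqref{cke0}. I would therefore lean on the precise finiteness criterion in \cite{BEGZ10} (their monotonicity and capacity estimates for non-pluripolar products) rather than attempting a bare-hands estimate, and reduce the global claim to the local one using a partition of unity subordinate to coordinate charts covering $D$, together with the fact that $D$ is a smooth divisor so that these local computations patch consistently.
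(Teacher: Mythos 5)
Your proposal has the same skeleton as the paper's proof: Lelong--Poincar\'e in local coordinates, the observation that the product vanishes on $X-D$, reduction to a local finite-mass statement near $D$, and the trivial-extension argument to conclude the product is the zero current. The final step is fine. The problem is the step you yourself single out as the main obstacle: the finite-mass verification is never actually carried out, and the route you sketch for it is circular. You propose to dominate $\omega_{\varphi}^{n-1}\le C\omega_{\beta}^{n-1}$ and then claim that $[D]\wedge\omega_{\beta}^{n-1}$ has finite mass ``by an elementary computation''; but that product is exactly as ill-defined as the one in the statement --- it is the product the introduction flags (cf.\ \eqref{cke0}) as not making sense in the Bedford--Taylor framework, and you concede this two sentences later. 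Deferring to unspecified ``monotonicity and capacity estimates'' in \cite{BEGZ10} does not close the loop. The paper closes it with a concrete regularization: set $v_{\varepsilon}=\log(|z_1|^2+\varepsilon)$, so that $dd^c v_{\varepsilon}=\varepsilon\, dz_1\wedge d\bar z_1/(|z_1|^2+\varepsilon)^2$ is a positive multiple of $dz_1\wedge d\bar z_1$. Wedging with $dz_1\wedge d\bar z_1$ annihilates every term of $\omega_{\varphi}^{n-1}$ containing a normal differential, so $dd^c v_{\varepsilon}\wedge\omega_{\varphi}^{n-1}=dd^c v_{\varepsilon}\wedge(\Pi\omega_{\varphi})^{n-1}$, where $\Pi$ projects onto the directions parallel to $D$; and $(\Pi\omega_{\varphi})^{n-1}$ \emph{is} uniformly bounded, because the cone singularity of a conic metric lives only in the normal direction. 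Fubini then gives $\int_{U-D} dd^c v_{\varepsilon}\wedge\omega_{\varphi}^{n-1}\le C$ uniformly in $\varepsilon$. This projection observation is the actual content behind your phrase ``$\omega_{\beta}$ restricts to a smooth finite form transverse to $D$,'' and without it (or something equivalent) your mass bound is an assertion, not a proof.

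There is also an internal inconsistency in your setup which, once noticed, would rescue the argument more cheaply than any estimate. The truncated masses you propose to control, $[D]\wedge\bigl(\omega_{\varphi}|_{\{-\log|s|^2<k\}}\bigr)^{n-1}$, are supported in $\{\log|s|^2>-k\}$, a set disjoint from $D$; since $[D]$ is supported on $D$, every one of these truncated products is identically zero, so the uniform bound you seek holds trivially. In other words, your second paragraph (vanishing of the classical product on $X-D$) already settles the finite-mass condition in the \cite{BEGZ10} scheme for currents with small unbounded locus: the Bedford--Taylor product computed on $X-D$ is the zero current, which certainly has locally finite mass near $D$, hence the non-pluripolar product is well defined and equals the trivial extension of zero. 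Had you connected these two halves of your own argument, no computation would be needed at all. As written, however, the proposal treats its crucial step as a nontrivial estimate, offers a circular bound for it, and punts --- that is a genuine gap, even though both the paper's regularization computation and the one-line truncation remark above are available to fill it.
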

\begin{proof}
The divisor $D$ is a (locally) complete pluripolar closed subset of $X$.
According to \cite{BEGZ10}, 
the potential of the current $[D]$ is a
$quasi$-$psh$ function with small unbounded locus. And $\varphi\in\mathcal{C}^{2,\alpha,\beta}$ is bounded on $X$.
Then it's enough to prove the Bedford-Talyor product $[D]\wedge\omega_{\varphi}^{n-1}$ defined on the open subset $X-D$ has locally finite mass near each point of the divisor. 

By $Poincar\acute{e}$-$Lelong$ formula, the integration current $[D]$ is equal to $dd^c\log |s|^2$, where $s$ is its defining section. 
In a local coordinate $(z_1,\cdots, z_n)$ near a point $p\in U$ on $D$, we can assume the divisor $D$ coincides with the set $\{z_1 =0\}$, and the integration current can be written down as $$[D] = dd^c\log |z_1|^2, $$ in this local coordinate. Define 
$v_{\ep} = \log(|z_1|^2 + \ep) $ in the open neighborhood $U$.
Then $\{v_{\ep}\}$ forms a sequence of $psh$ functions decreasing to $\log |z_1|^2$ locally. And notice that 
\begin{equation}
\label{ddv}
\ddbar v_{\ep} = \frac{ \ep dz_1\wedge d\bar{z}_1}{(|z_1|^2+\ep)^2} 
\end{equation}
is a positive current on $X-D$. Hence we can compute the integral as 
\begin{eqnarray}
\label{well define}
\int_{U-D} dd^c v_{\ep} \wedge \omega_{\varphi}^{n-1} &=& \int_{U-D} dd^c v_{\ep} \wedge \Pi (\omega_{\varphi}^{n-1})
\nonumber\\
&=&\int_{U-D} dd^c v_{\ep} \wedge (\Pi\omega_{\varphi})^{n-1},
\end{eqnarray}
where the operator $\Pi$ is to project any $(p,p)$ form to parallel direction of the divisor(delete all forms involving $dz_1$ or $d\bar{z}_1$). 
This is because the $(1,1)$ form $dd^c v_{\ep}$ already occupies all the normal directions($dz_1$ and $d\bar{z}_1$) of the hypersurface $D$ of $X$.  
Now notice that the $(n-1, n-1)$ current $(\Pi\omega_{\varphi})^{n-1}$ is uniformly bounded on $X$. Therefore, by Fubini's theorem, the integral in equation (\ref{well define}) can be estimated as
\begin{eqnarray}
\label{well define2}
&\left( 2\pi\ep\int_0^1 \frac{rdr}{(r^2+\ep)^2}\right)& \left(\int_{|z_2|^2 + \cdots + |z_n|^2 \leq 1} dz_2\wedge d\bar{z}_2\cdots dz_n\wedge d\bar{z}_n \right) 
\nonumber\\
&\leq&  \left. \frac{-\ep C}{2(r^2 + \ep)} \right|_0^1
\nonumber\\
&=& \frac{C}{2(1+\ep)},
\end{eqnarray}
where $|z_1| = r$ and $C$ is some uniform constant. Finally, our integral is bounded by Bedford-Talyor's approximation theorem:
$$ \int_{U-D} [D]\wedge \omega_{\varphi}^{n-1} =\lim_{\ep} \int_{U-D} dd^c v_{\ep}\wedge \omega_{\varphi}^{n-1} < +\infty. $$

Finally, the non-pluripolar product $\langle [D]\wedge\omega_{\varphi}^{n-1}\rangle$ charges zero mass on the divisor, since $D$ is a pluripolar set in $X$. On the other hand, its support is in the divisor. In fact, it's the trivial extension of the zero current $\left. [D]\wedge\omega_{\varphi}^n \right|_{X-D}$ to $X$.
\end{proof}

\begin{rem}
There is another way to define this wedge product by Demailly's generalization of Bedford-Talyor product, but this product is no longer trivial on the divisor. 
\end{rem}
Now the non-pluripolar product $\langle [D]\wedge\omega_{\varphi}^{n-1}\rangle$ is a closed positive $(n,n)$ current on $X$. That is, its coefficient is a positive measure on the manifold. Then equation (\ref{ccsck1}) can hold in the sense of measures, and this naturally implies the wedge product between the Ricci curvature and $\omega^{n-1}_{\varphi}$ is well defined. 

\begin{prop}
\label{lem-well}
Suppose the conic K\"ahler metric $\omega_{\varphi}$ is in the family $\mathcal{R}$. Then the non-pluripolar product $Ric(\omega_{\varphi})\wedge\omega_{\varphi}^{n-1}$ is well defined.
\end{prop}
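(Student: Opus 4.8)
The plan is to realize $Ric(\omega_{\varphi})$ as a difference of two positive closed currents and then treat the two resulting products separately. Since $\omega_{\varphi}$ lies in the family $\mathcal{R}$, we are given a conic K\"ahler metric $\Omega$ with $\mathcal{C}^{2,\alpha,\beta}$ potential and a large constant $C$ with $Ric(\omega_{\varphi})\geq -C\Omega$ and $Ric(\omega)+C\omega>0$. By Lemma \ref{lem-ric} the form $T:=Ric(\omega_{\varphi})+C\Omega$ extends to a positive closed $(1,1)$ current on $X$ whose singularity type is determined by $\phi_{ric}=-\log(\omega_{\varphi}^n/\omega^n)$; in particular its unbounded locus is contained in $D$ and is therefore small in the sense of \cite{BEGZ10}. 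I would then write
\begin{equation*}
Ric(\omega_{\varphi})\wedge\omega_{\varphi}^{n-1}=T\wedge\omega_{\varphi}^{n-1}-C\,\Omega\wedge\omega_{\varphi}^{n-1},
\end{equation*}
and reduce the statement to showing that each of the two products on the right is a well-defined non-pluripolar product.

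For the second term, both $\Omega$ and $\omega_{\varphi}$ have bounded $\mathcal{C}^{2,\alpha,\beta}$ potentials, so $\Omega\wedge\omega_{\varphi}^{n-1}$ agrees with the classical Bedford-Taylor product of currents with bounded potentials; this is a positive measure of finite total mass and no extension issue arises. For the first term, $T$ and $\omega_{\varphi}$ are genuine positive closed currents on the compact K\"ahler manifold $X$, the factor $\omega_{\varphi}$ having bounded potential and $T$ having small unbounded locus contained in $D$. By the theory of non-pluripolar products in \cite{BEGZ10}, the product $\langle T\wedge\omega_{\varphi}^{n-1}\rangle$ is then well-defined, its total mass being controlled by the cohomological intersection $\{T\}\cdot[\omega_{\varphi}]^{n-1}$. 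A self-contained confirmation of finite mass near $D$ could be given in the spirit of Lemma \ref{lem-msr}: in a coordinate chart one truncates the quasi-$psh$ potential $\Psi=\psi+\phi_{ric}$ of $T$, uses that $\phi_{ric}$ is bounded above near $D$ (with $\phi_{ric}\sim(1-\beta)\log|z_1|^2$) while $\omega_{\varphi}^{n-1}$ has bounded potential, and bounds the integrated mass by Fubini.

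Finally I would assemble the two pieces. The non-pluripolar product is multilinear in its positive closed factors, so the displayed identity is legitimate once both products on the right are known to exist; and because the left-hand side is intrinsically determined by the local quasi-$psh$ potential $\phi_{ric}$ of $Ric(\omega_{\varphi})$ together with the bounded potential of $\omega_{\varphi}$, the resulting current is independent of the auxiliary choice of $\Omega$ and $C$. The main obstacle I anticipate is precisely this last point: verifying that decomposing $Ric(\omega_{\varphi})$ into a difference of positive closed currents is compatible with the non-pluripolar calculus and yields a choice-independent current, together with controlling the mass of the singular term $\langle T\wedge\omega_{\varphi}^{n-1}\rangle$ near $D$, where $T$ is genuinely unbounded. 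Everything else is a routine application of the Bedford-Taylor and \cite{BEGZ10} machinery, and one should record that this $(n,n)$ current is exactly what makes the right-hand side of equation (\ref{ccsck1}) meaningful.
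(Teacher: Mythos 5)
Your proposal is correct, and its skeleton coincides with the paper's: you use exactly the same decomposition $Ric(\omega_{\varphi}) = T - C\Omega$ with $T = Ric(\omega_{\varphi}) + C\Omega = \Gamma + dd^c\Psi$ extended as a positive current via Lemma \ref{lem-ric}, dispose of the term $C\,\Omega\wedge\omega_{\varphi}^{n-1}$ by bounded potentials, and reduce everything to the singular product $\langle T\wedge\omega_{\varphi}^{n-1}\rangle$. Where you genuinely diverge is in how that last product is shown to have finite mass. The paper does this by hand: in a chart near $D$ it writes the local potential $\tilde{\Psi}$ of $T$ as $\Phi + (1-\beta)\log|s|^2$ with $\Phi$ uniformly bounded, bounds $\int \chi\, dd^c\Phi\wedge\omega_{\varphi}^{n-1}$ by integration by parts (Theorem 1.14 of \cite{BEGZ10}), and bounds the remaining term $(1-\beta)\int\chi\,[D]\wedge\omega_{\varphi}^{n-1}$ by the explicit computation of Lemma \ref{lem-msr}. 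You instead invoke the soft global theorem of \cite{BEGZ10} that non-pluripolar products of arbitrary closed positive $(1,1)$-currents on a \emph{compact} K\"ahler manifold are always well defined; this is legitimate (compactness is exactly what that result requires) and shortens the proof, at the price of leaning on a harder black box and losing the explicit identification of the singular contribution as a $[D]$-type term, which is what the paper's splitting makes visible and what gets reused elsewhere (e.g.\ in Corollary \ref{cor-ccsck}). Your secondary, ``self-contained'' sketch is essentially the paper's argument in compressed form, since identifying the singularity of $\Psi$ with $(1-\beta)\log|z_1|^2$ and bounding the resulting mass by Fubini is precisely Lemma \ref{lem-msr} plus the $\Phi$-term estimate. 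Finally, the choice-independence issue you flag is real but resolves immediately: by locality of the non-pluripolar product in the plurifine topology, any such difference restricts to the smooth form $Ric(\omega_{\varphi})\wedge\omega_{\varphi}^{n-1}$ on $X-D$ and charges no mass on the pluripolar set $D$, so it is the trivial extension of that smooth form regardless of the auxiliary $\Omega$ and $C$ --- which is also why the paper never needs to address the point.
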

\begin{proof}
Write $Ric(\omega_{\varphi}) = \Gamma + dd^c\Psi - C\Omega$, and the Ricci potential $\phi_{ric} = \Psi - \psi$ as in lemma (\ref{lem-ric}). 
Then it's enough to prove the non-pluripolar product $(\Gamma+ dd^c \Psi ) \wedge \omega_{\varphi}^{n-1}$ is well defined. In a local coordinate chart $U$ near a point 
$p$ on the divisor $D$, we can write $dd^c\tilde{\Psi} = \Gamma + dd^c \Psi$ for some $psh$ function $\tilde{\Psi}$ in $U$. 
Notice that $\tilde{\Psi}$ has the same singularity type with the local function $(1-\beta)\log |s|^2$. Hence the new defined function $\Phi = \tilde{\Psi} - (1-\beta)\log |s|^2$ is uniformly bounded on $X$. Now let $\chi\geq 0$ be a smooth function on $U$ with compact support, and we can estimate their local mass as
\begin{equation}
\label{finite mass}
\int_{U-D} \chi dd^c\tilde{\Psi}\wedge\omega_{\varphi}^{n-1} = \int_{U-D} \chi dd^c\Phi\wedge\omega_{\varphi}^{n-1} 
+ (1-\beta)\int_{U-D} \chi[D]\wedge\omega_{\varphi}^{n-1}.
\end{equation}

Notice that the first term of the RHS of equation (\ref{finite mass}) is bounded from integration by parts:
$$ \int_{U-D} \chi dd^c\Phi\wedge\omega_{\varphi}^{n-1} = \int_{U-D} \Phi dd^c\chi \wedge \omega_{\varphi}^{n-1}.$$
This is true thanks to theorem 1.14 of \cite{BEGZ10} again. And the second term of the RHS of equation (\ref{finite mass}) is also bounded 
because of lemma (\ref{lem-msr}). Then the result follows.
\end{proof}

\section{Conic energies}
\label{sec-3}

In the space of smooth K\"ahler potentials(metrics) $\mathcal{H}$, we can define the following energy functionals for any $\varphi\in \mathcal{H}$:
\begin{equation}\label{energy1}
\mathcal{E}(\varphi): =\frac{1}{n+1} \int_X \sum_{j=0}^n \varphi\omega_{\varphi}^{n-j}\wedge\omega^{j},
\end{equation}
and
\begin{equation}
\label{energy2}
\mathcal{E}^{\alpha}(\varphi): = \int_X \sum_{j=0}^{n-1}\varphi \omega_{\varphi}^{n-1-j}\wedge\omega^{j}\wedge\alpha,
\end{equation}
where $\alpha$ is any closed smooth $(1,1)$ form on $X$. Let $\varphi_t$ be a curve in the space $\mathcal{H}$, we can take the first variation of the two energies as 
\begin{equation}
\label{energy3}
\frac{d}{dt}\mathcal{E}(\varphi_t) = \int_X \varphi'_t\omega_{\varphi_t}^n;\ \ \ \ \frac{d}{dt}\mathcal{E}^{\alpha}(\varphi_t) = \int_X \varphi'_t\omega_{\varphi}^{n-1}\wedge\alpha.
\end{equation}
And on the product space $X\times \Sigma$, we can compute their complex Hessians as 
\begin{equation}
\label{hessian}
d_{\tau}d^c_{\tau} \mathcal{E}(\varphi_{\tau}) = \int_{X} (\pi^*\omega + dd^c\Phi)^{n+1}; \ \ \ 
d_{\tau}d^c_{\tau} \mathcal{E}^{\alpha}( \varphi_{\tau} )= \int_{X} (\pi^*\omega + dd^c\Phi)^{n}\wedge\pi^*\alpha.
\end{equation}
In general, the first energy $\mathcal{E}$ is an affine function along geodesics, and both energies $\mathcal{E}$ and $\mathcal{E}^{\alpha}$
are continuous with respect to weak topology.
Now, the so called Mabuchi's functional is defined as 
$$\mathcal{M}: = E + H, $$
where $E = \underline{R}\mathcal{E} - \mathcal{E}^{Ric\omega} $ is called the energy part of $\mathcal{M}$, and $H$ is the entropy part, which is defined as 
\begin{equation}
\label{entropy}
H(\varphi): = \int_X \log\frac{\omega_{\varphi}^n}{\omega^n} \omega_{\varphi}^n
\end{equation}
Moreover, the first variation of Mabuchi functional can be computed as 
\begin{equation}
\label{K}
\frac{d}{dt}\mathcal{M}(\varphi_t) = n\int_X \varphi'_t (\underline{R}\omega_{\varphi} - Ric\omega_{\varphi})\wedge\omega_{\varphi}^{n-1}.
\end{equation}

Our next goal is to generalize these energy functionals to conic setting.
But before doing that, let's discuss a bit about conic geodesics in the space of conic K\"ahler metrics.

\subsection{Conic geodesics}
Donaldson \cite{Don12} defines a global model metric with cone angle $\beta$ as
\begin{equation}
\label{533}
\tilde{\omega}_{\beta}: =  \omega + dd^c\varphi_{\beta}>0 ,
\end{equation}
where $ \varphi_{\beta} = C^{-1} dd^c |s|^{2\beta}_{\psi} $ with a large constant $C$. This global model metric is equivalent to conic K\"ahler metrics up to quasi-isometry.
\begin{defn}
\label{def-c11}
Let $\omega_{\varphi}$ be a possibly degenerate  K\"ahler current. 
Then we say its potential $\varphi$ has bounded conic Laplacian if 
$\Delta_{\tilde{\omega}_{\beta}}\varphi $ is an $L^{\infty}$ function on $X$. And we can write $\varphi \in \mathcal{H}_{\Delta,\beta}$.
\end{defn}

Thanks to the $L^p$ estimates and embedding theorems established for conic K\"ahler metrics \cite{CW},
the potential $\varphi \in \mathcal{H}_{\Delta, \beta}$ is a $C^{1,\alpha}$ function in $w$-coordinate.

Now given two conic K\"ahler metrics(potentials) $\varphi_0$ and $\varphi_1$, we can consider a curve ${\varphi_t}$ connecting them. Instead of the real variable $t$, it's better to introduce another
irrelevant real variable to make it as a complex variable $$\tau: = e^{t + is}. $$ Hence our domain under consideration becomes a product manifold $X\times\Sigma$, 
where $\Sigma$ is an annuals in $\mathbb{C}$.

On the product manifold $X\times\Sigma$, we call a metric $\mathcal{G}$ to be conic $\mathcal{C}^{1,\bar{1}}_{\beta}$ if 
$$0\leq \mathcal{G} \leq C \mathcal{G}_{\beta}, $$
where $C$ is a fixed constant, and $\mathcal{G}_{\beta}: = \pi^* \tilde{\omega}_{\beta} + \sqrt{-1} dt\wedge d\bar{t}$. 
Hence $\mathcal{G}|_{X\times\{t\}}\in\mathcal{H}_{\Delta, \beta}$ for each $t\in\Sigma$. 

\begin{defn}
\label{def-con}
A conic $\mathcal{C}_{\beta}^{1,\bar{1}}$ geodesic $\mathcal{G}$(under Mabuchi's $L^2$ norm metric) connecting $\varphi_0$ and $\varphi_1$
is a closed positive $(1,1)$ current on $X\times\Sigma$ with $\mathcal{C}_{\beta}^{1,\bar{1}}$ regularity, satisfying
the following conditions:
\begin{eqnarray}
\label{conic-geod}
\mathcal{G}^{n+1} &=& 0
\nonumber\\
\mathcal{G}|_{X\times\{ 0\}, \{ 1\}} &=& \omega_{\varphi_{0,1}}
\end{eqnarray}
\end{defn}

In order to establish conic geodesics, we need to introduce more regularities on the boundary of $X\times\Sigma$. 
According to \cite{CZ12}, we denote $\mathcal{H}_C$ by the space of all
K\"ahler potentials $\varphi\in\cC^{3,\b}$ such that $\omega\in\mathcal{R}$, for $\b\in (0,\frac{1}{2})$. 
Then we have the following existence theorem for conic geodesics. 

\begin{theorem}[Calami-Zheng]
\label{CZ}
Assume two K\"ahler potentials $\varphi_0, \varphi_1$ are in the space $\mathcal{H}_C$. 
Then there exists a conic $\mathcal{C}^{1,\bar{1}}_{\beta}$ geodesic $\mathcal{G}$ connecting them. 
\end{theorem}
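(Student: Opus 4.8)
The plan is to realize the geodesic as the solution of a Dirichlet problem for the homogeneous complex Monge-Amp\`ere (HCMA) equation on the product $X\times\Sigma$, in the spirit of Semmes--Donaldson and X.X. Chen \cite{Chen00}, and then to carry out the degenerate limit through uniform a priori estimates adapted to the cone singularity. First I would reformulate the problem: writing any candidate current as $\mathcal{G}=\pi^*\tilde{\omega}_{\beta}+dd^c\Phi$ for an $S^1$-invariant potential $\Phi$ on $X\times\Sigma$, the conditions of Definition \ref{def-con} become the Dirichlet problem
$$(\pi^*\tilde{\omega}_{\beta}+dd^c\Phi)^{n+1}=0,\qquad \Phi|_{X\times\{0\},\{1\}}=\varphi_{0},\ \varphi_{1}.$$
Thus existence of $\mathcal{G}$ is equivalent to solving this degenerate equation with boundary data in $\mathcal{H}_C$.

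Second, since the right-hand side is degenerate, I would regularize it by a small parameter, solving the non-degenerate family
$$(\pi^*\tilde{\omega}_{\beta}+dd^c\Phi_{\ep})^{n+1}=\ep\,\mathcal{G}_{\beta}^{n+1},$$
where $\mathcal{G}_{\beta}$ is the product reference metric of the excerpt. Because $\tilde{\omega}_{\beta}$ is a genuine conic K\"ahler metric and the boundary potentials lie in $\mathcal{C}^{3,\beta}$, each $\ep$-problem should be solvable by the continuity method inside the conic H\"older category, using the linear Schauder theory in the $w$-coordinate of Definitions \ref{def-1}--\ref{def-3} together with the $L^p$ theory and embeddings for conic metrics from \cite{CW}; this is precisely the regime where $\beta<\tfrac12$ keeps the model geometry controllable.

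The heart of the argument is the a priori estimates \emph{uniform in} $\ep$. I would establish them in the usual order: a $\mathcal{C}^0$ bound by comparison with upper and lower barriers built from the boundary data, as in \cite{Chen00}; a boundary gradient/normal-derivative bound from barriers constructed near $\partial(X\times\Sigma)$ out of the $\mathcal{C}^{3,\beta}$ data; and finally the decisive second-order estimate, namely a uniform $L^{\infty}$ bound on the conic Laplacian $\Delta_{\tilde{\omega}_{\beta}}\Phi_{\ep}$. This is the Aubin--Yau / Chern--Lu computation performed with respect to the cone metric, and its curvature terms are controlled exactly by the $\mathcal{C}^{3,\beta}$ hypothesis (third derivatives bounded against $\omega_{\beta}$) together with the Ricci-lower-bound condition $\omega_{\varphi_i}\in\mathcal{R}$ built into $\mathcal{H}_C$. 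Once these bounds are in hand I would let $\ep\to 0$, extract a weak limit $\Phi$, and use the uniform bounded-Laplacian estimate and the conic embedding theorems to conclude that each slice $\Phi|_{X\times\{t\}}$ lies in $\mathcal{H}_{\Delta,\beta}$, so that $\mathcal{G}$ is $\mathcal{C}^{1,\bar{1}}_{\beta}$ and solves $\mathcal{G}^{n+1}=0$ in the limit.

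The main obstacle I anticipate is the uniform conic second-order estimate. In the smooth case Yau's inequality closes once one has a bisectional-curvature lower bound for the reference, but here the curvature of $\tilde{\omega}_{\beta}$ is unbounded along $D$ (the model cone metric degenerates as $\rho\to 0$), so the smooth estimate cannot be applied verbatim. The resolution must either absorb the singular curvature into the conic framework, exploiting that in $w$-coordinates $\tilde{\omega}_{\beta}$ is quasi-isometric to the Euclidean metric and the offending derivatives sit in $\mathcal{C}^{0,\alpha,\beta}$, or run the estimate on $X-D$ with a cutoff and verify that the cutoff contributes no boundary term; in both routes the restriction $\beta<\tfrac12$ is what guarantees that the model geometry, and hence the curvature control, stays manageable. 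This step, rather than the $\mathcal{C}^0$ and boundary estimates, is where the real work of \cite{CZ12} lies.
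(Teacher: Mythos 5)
You should first be aware that the paper does not prove Theorem \ref{CZ} at all: it is imported by citation as a result of Calamai--Zheng \cite{CZ12}, and the paper's role is only to quote it (together with the definition of the space $\mathcal{H}_C$) so that Corollary \ref{cor-convex} follows from the main convexity theorem. So there is no internal proof to compare against; your proposal has to be judged as a reconstruction of the argument in \cite{CZ12}.

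As such a reconstruction, your overall strategy is the right one and matches the structure of the cited work: reformulate the geodesic as a Dirichlet problem for the homogeneous complex Monge-Amp\`ere equation on $X\times\Sigma$, regularize by a non-degenerate family, and pass to the limit through estimates uniform in $\ep$, using exactly the three ingredients packaged into $\mathcal{H}_C$ (the $\mathcal{C}^{3,\beta}$ regularity, the Ricci lower bound $\omega_{\varphi_i}\in\mathcal{R}$, and $\beta<\tfrac12$). But as a proof it has a genuine gap, and you name it yourself: the uniform second-order estimate near $D$, where the bisectional curvature of $\tilde{\omega}_{\beta}$ is unbounded, is precisely the content of \cite{CZ12}, and your proposal does not carry it out --- it offers two conjectural routes (absorb the singular curvature into the conic framework, or cut off along $D$ and check there is no boundary contribution) without executing either, and the second route is known to be delicate since naive cutoffs do produce uncontrolled terms against a merely $\mathcal{C}^{1,\bar 1}_{\beta}$ solution. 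Two further steps are also asserted rather than established: the $\ep$-family you write down has right-hand side $\ep\,\mathcal{G}_{\beta}^{n+1}$, which itself blows up like $|s|^{2\beta-2}$ along $D\times\Sigma$, so solvability of each $\ep$-problem ``by the continuity method inside the conic H\"older category'' is not a routine application of Yau; it needs a linear Schauder theory \emph{with boundary} on the product manifold for cone metrics, which is not what \cite{CW} provides (that reference concerns interior regularity of fiberwise equations). In short: correct skeleton, but the theorem's actual mathematical content --- the conic second-order estimate and the boundary-value linear theory --- is exactly what is missing.
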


When the angle $\beta \geq 1/2$, the unboundedness of curvature causes troubles in the proof of such existence theorem.
However, it is still conjectured that such conic geodesic exists for every angle $\beta\in (0,1)$, 
and for all points in the space of $\mathcal{C}^{2,\alpha,\beta}$ conic K\"ahler potentials. 

\subsection{Conic Mabuchi's functional}

Now we can try to generalize our energy functionals to the space of conic metrics with $\mathcal{C}^{2,\alpha,\beta}$ potentials. The goal is to find the Euler-Langrange equation for conic cscK metrics defined in (\ref{ccsck2}).
 
First notice that for $\mathcal{C}^{2,\alpha,\beta}$ potentials, the energies $\mathcal{E}$ and $\mathcal{E}^{\alpha}$ are well defined thanks to the Bedford-Taylor product. And we can compute their first variation formula as follows.
\begin{lemma}
\label{lem-1variation}
Suppose $\varphi_t$ is a $\mathcal{C}^1$ curve in the space of conic K\"ahler potentials with $\mathcal{C}^{2,\alpha,\beta}$ H\"older norm. Then equation (\ref{energy3}) holds.
\end{lemma}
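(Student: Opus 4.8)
The plan is to prove the first-variation formulas $\frac{d}{dt}\cE(\varphi_t) = \int_X \varphi_t' \omega_{\varphi_t}^n$ and $\frac{d}{dt}\cE^\a(\varphi_t) = \int_X \varphi_t' \omega_{\varphi_t}^{n-1}\wedge\a$ for a $\cC^1$ curve $\varphi_t$ of conic potentials with $\cC^{2,\a,\b}$ norm. The natural approach is direct differentiation under the integral sign, exactly mimicking the classical smooth computation, with the key point being that all wedge products appearing are genuine Bedford--Taylor products (the $\cC^{2,\a,\b}$ potentials are bounded and continuous, in fact $C^{1,\a}$ in the $w$-coordinate, so $\omega_{\varphi_t}$ is a well-defined closed positive current with bounded local potentials) and that integration by parts is legitimate on $X-D$ via Theorem 1.14 of \cite{BEGZ10}.

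\textbf{First I would} differentiate $\cE(\varphi_t)$ term by term. Writing $\cE(\varphi_t) = \frac{1}{n+1}\sum_{j=0}^n \int_X \varphi_t\,\omega_{\varphi_t}^{n-j}\wedge\omega^j$, each summand contributes two pieces upon differentiation: one from $\varphi_t' = \partial_t\varphi_t$ directly, and one from $\partial_t(\omega_{\varphi_t}^{n-j}) = (n-j)\,dd^c\varphi_t'\wedge\omega_{\varphi_t}^{n-j-1}$. The heart of the matter is to integrate the $dd^c\varphi_t'$ terms by parts and telescope the resulting sum, precisely as in the smooth case where the combinatorial identity collapses everything to the single expression $\int_X \varphi_t'\,\omega_{\varphi_t}^n$. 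I would carry out the same bookkeeping here, justifying each integration by parts by invoking the \cite{BEGZ10} formula: both $\varphi_t'$ and the potentials involved are bounded quasi-psh functions with small unbounded locus (the divisor $D$ being complete pluripolar), so no boundary contribution from $D$ appears. The computation for $\cE^\a$ is strictly simpler since the structure is linear in one fewer factor of $\omega_{\varphi_t}$, with $\a$ a fixed smooth form.

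\textbf{The hard part will be} the analytic justification of interchanging $\frac{d}{dt}$ with $\int_X$ and the validity of integration by parts uniformly in $t$. Since $\varphi_t$ is only $\cC^1$ in $t$ with values in $\cC^{2,\a,\b}$, I would need a dominated-convergence argument: the difference quotients $(\varphi_{t+h}-\varphi_t)/h$ converge to $\varphi_t'$ in a norm controlling the relevant integrals, and the measures $\omega_{\varphi_t}^{n-j}\wedge\omega^j$ are uniformly bounded in mass (their total mass is the topological intersection number $[\omega]^n$, constant in $t$). The subtlety is that the densities of $\omega_{\varphi_t}^n$ degenerate or blow up near $D$ like the model metric, so I would localize near the divisor and use the quasi-isometry to $\omega_\b$ together with the mass estimate from Lemma \ref{lem-msr} to bound the near-$D$ contributions uniformly, while on compact subsets of $X-D$ the curve is smooth and the classical argument applies verbatim.

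\textbf{Finally I would} note that the telescoping identity is purely formal once the individual integrations by parts are justified, so the entire proof reduces to (i) establishing that each term is a legitimate Bedford--Taylor or non-pluripolar integral, and (ii) one clean application of the \cite{BEGZ10} integration-by-parts theorem on $X-D$ to kill the cross terms. I expect step (ii), combined with the uniform-in-$t$ mass control near $D$, to be the only genuine obstacle; everything else is the standard smooth computation transported through the quasi-isometry with the model metric $\omega_\b$.
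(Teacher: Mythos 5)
Your overall skeleton --- differentiate term by term, telescope, and reduce everything to the single integration by parts
\begin{equation*}
\int_{X-D}\varphi\, dd^c\varphi'\wedge\omega_{\varphi}^{n-j-1}\wedge\omega^{j}
=\int_{X-D}\varphi'\, dd^c\varphi\wedge\omega_{\varphi}^{n-j-1}\wedge\omega^{j}
\end{equation*}
justified by Theorem 1.14 of \cite{BEGZ10} --- is exactly the paper's plan (this identity is equation (\ref{IBP-time}) there). However, your justification of that key step contains a genuine flaw: you assert that $\varphi'_t$ is a \emph{bounded quasi-psh function} with small unbounded locus. In general it is not. Membership in $\mathcal{C}^{2,\alpha,\beta}$ only gives $dd^c\varphi'_t\geq -C\tilde{\omega}_{\beta}$, a lower bound relative to the \emph{conic} metric; since $\tilde{\omega}_{\beta}$ blows up like $|z_1|^{2\beta-2}$ near $D$, this does not imply $dd^c\varphi'_t\geq -C'\omega$ for any smooth form. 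For instance $\varphi'=-\varphi_{\beta}$ (the negative of the model potential in (\ref{533})) is a legitimate tangent vector and is not quasi-psh. So Theorem 1.14 of \cite{BEGZ10}, which is stated for differences of quasi-psh functions locally bounded off a complete pluripolar set, does not apply to $\varphi'$ off the shelf --- and this is precisely the point where the conic setting departs from the smooth computation you are transporting.

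The gap can be repaired in two ways. (a) The cheap fix of your own argument: write $\varphi' = (\varphi' + C\varphi_{\beta}) - C\varphi_{\beta}$; since $dd^c(\varphi'+C\varphi_{\beta}) \geq -C\tilde{\omega}_{\beta}+C(\tilde{\omega}_{\beta}-\omega)=-C\omega$ and $\varphi_{\beta}$ is bounded, this exhibits $\varphi'$ as a difference of bounded quasi-psh functions, which is what \cite{BEGZ10} actually requires. (b) The paper's route: never apply \cite{BEGZ10} to $\varphi'$ at all. Instead apply it to the difference quotients $\varphi_t-\varphi$, which are manifestly differences of bounded $\omega$-psh potentials, and then pass to the limit using the $\mathcal{C}^1$ hypothesis (\ref{C1}): the error satisfies $-\ep(t)\tilde{\omega}_{\beta}\leq dd^c\{\frac{1}{t}(\varphi_t-\varphi) - \varphi'\}\leq \ep(t)\tilde{\omega}_{\beta}$, and since $\tilde{\omega}_{\beta}\wedge\omega_{\varphi}^{n-j-1}\wedge\omega^{j}$ has finite total mass the discrepancy is $O(\ep(t))$. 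Either repair works; (b) has the advantage of invoking the pluripotential theorem only in the form in which it is literally stated. Your remaining points (dominated convergence in $t$, uniform mass bounds, the telescoping bookkeeping) agree with the paper; the appeal to Lemma \ref{lem-msr} is unnecessary here, since for bounded potentials the Bedford--Taylor products already charge no mass on $D$.
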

\begin{proof}
First note that the Bedford-Talyor product $\omega_{\varphi}^{n-j}\wedge\omega^j$ doesn't charge mass on any pluripolar set, and $\varphi$ is a continuous function on $X$. Hence we can write the energy integration as 
$$\int_X \varphi\omega_{\varphi}^{n-j}\wedge\omega^j = \int_{X-D}\varphi \omega_{\varphi}^{n-j}\wedge\omega^j. $$

Now it's enough to prove the following claim:
\begin{equation}
\label{IBP-time}
\int_{X-D}\varphi dd^c\varphi' \wedge \omega_{\varphi}^{n-j-1}\wedge\omega^j  = \int_{X-D} \varphi' dd^c\varphi\wedge\omega_{\varphi}^{n-j-1}\wedge\omega^j.
\end{equation}
The time derivative $\varphi' \in\mathcal{C}^{2,\alpha,\beta}$ is in the tangent space of all $\mathcal{C}^{2,\alpha,\beta}$ conic K\"ahler potentials, and it satisfies the following convergence condition:
\begin{equation}
\label{C1}
\left| \frac{1}{t} (\varphi_t -\varphi -t\varphi')  \right|_{\mathcal{C}^{2,\alpha,\beta}} \rightarrow 0.
\end{equation}
Then by dominate convergence theorem, we can write the RHS of equation (\ref{IBP-time}) as 
\begin{equation}
\label{IBP-time0}
 \int_{X-D} \varphi' dd^c\varphi\wedge\omega_{\varphi}^{n-j-1}\wedge\omega^j=\lim_{t\rightarrow 0} \frac{1}{t}\int_{X-D} (\varphi_t - \varphi) dd^c\varphi\wedge\omega_{\varphi}^{n-j-1}\wedge\omega^j.
\end{equation}
Next, we can normalize our potential $\varphi \geq 0$, since it is a continuous function $X$. By equation (\ref{C1}), there exists a sequence of real numbers $\ep(t_i)>0$ converging to zero as $t_i\rightarrow 0$, such that 
$$ - \ep(t_i) \tilde{\omega}_{\beta} \leq dd^c \left\{\frac{1}{t_i}(\varphi_{t_i} - \varphi) - \varphi'  \right\} \leq \ep(t_i) \tilde{\omega}_{\beta}, $$
and the LHS of equation (\ref{IBP-time}) can be estimated as 
\begin{eqnarray}
\label{IBP-time3}
\left| \int_{X-D} \varphi dd^c \left\{ \frac{1}{t}(\varphi_t - \varphi) - \varphi' \right\}      \wedge\omega_{\varphi}^{n-j-1}\wedge\omega^j \right|
&\leq& \ep(t) \int_{X-D} |\varphi | \tilde{\omega}_{\beta}\wedge\omega_{\varphi}^{n-j-1}\wedge\omega^j
\nonumber\\
&\leq& \ep(t) C,
\end{eqnarray}
for some uniform constant $C$. Finally we can do integration by parts as 
\begin{equation}
\label{IBP-time1}
\int_{X-D} (\varphi_t - \varphi) dd^c\varphi\wedge\omega_{\varphi}^{n-j-1}\wedge\omega^j 
= \int_{X-D} \varphi dd^c(\varphi_t - \varphi)\wedge\omega_{\varphi}^{n-j-1}\wedge\omega^j,
\end{equation}
by theorem 1.14 of \cite{BEGZ10} again, and the claim follows.
\end{proof}

\begin{rem}
\label{rem-energy}
If $\alpha$ is a fixed conic closed $(1,1)$ current in the definition of $\mathcal{E}^{\alpha}$, then lemma (\ref{lem-1variation}) still holds. This is because we can always write $\alpha$ as the difference of two conic K\"ahler metrics(maybe in different cohomology class) with $\mathcal{C}^{2,\alpha,\beta}$ potentials.  
\end{rem}
Next, let's consider how to define the conic entropy functional, and here we will use a slightly different formula: for a conic K\"ahler potential $\varphi\in\mathcal{C}^{2,\alpha,\beta}$, put
\begin{equation}
\label{cone-entropy}
H_{c}(\varphi): = \int_{X} \log \frac{\omega_{\varphi}^n}{\omega^n/ |s|^{2-2\beta}_{\psi}} \omega_{\varphi}^n,
\end{equation}
where $\psi$ is a smooth $psh$ metric on the line bundle $L_D$. Note that the function $\frac{\omega_{\varphi}^n |s|_{\psi}^{2-2\beta}}{\omega^n}$ is uniformly bounded on $X$ by the local isometric property of conic metrics. Therefore, the functional $H_c$ is well defined. 
When the curve $\omega_{\varphi_t}$ is in the family $\mathcal{R}$, we can compute its first variation:
\begin{eqnarray}
\label{entropy4}
\frac{d}{dt}H_c(\varphi_t) &=& \int_{X-D} \Delta_{\varphi}\varphi'\omega_{\varphi}^n +
\int_{X-D} \log \frac{\omega_{\varphi}^n}{e^{-(1-\beta)\log |s|^2_{\psi}}\omega^n} \Delta_{\varphi}\varphi' \omega_{\varphi}^n
\nonumber\\
&=& n\int_{X-D} dd^c\varphi'\wedge\omega_{\varphi}^{n-1} + n\int_{X-D} \log\frac{\omega_{\varphi}^n}{\omega^n / |s|_{\psi}^{2-2\beta}} dd^c\varphi'\wedge\omega_{\varphi}^{n-1} 
\nonumber\\
&=& n\int_{X-D} \varphi' dd^c \log\frac{\omega_{\varphi}^n |s|_{\psi}^{2-2\beta}}{\omega^n} \wedge\omega_{\varphi}^{n-1}
\nonumber\\
&=& n \int_{X-D} \varphi' (Ric\omega - Ric\omega_{\varphi} + (1-\beta)[D] - dd^c\psi)\wedge\omega_{\varphi}^{n-1}
\nonumber\\
&=& n\int_{X-D} \varphi' (Ric\omega - Ric\omega_{\varphi} )\wedge\omega_{\varphi}^{n-1}  - n\int_{X-D} \varphi' dd^c\psi\wedge \omega_{\varphi}^{n-1}.
\end{eqnarray}

Here we need to justify the equality from the second line to the third line in equation (\ref{entropy4}). The integration by parts formula can be applied to 
the first term $\int_{X-D} dd^c\varphi'\wedge\omega_{\varphi}^n$ for the same reason as in equation (\ref{IBP-time}). For the second term of line two, the bounded function in the integrant can be written as
$$\log\frac{\omega_{\varphi}^n |s|_{\psi}^{2-2\beta}}{\omega^n} = (1-\beta)\log|s|^2_{\psi} - \phi_{ric},$$
where $(1-\beta)\log|s|^2_{\psi}$ is a $quasi$-$psh$ function on $X$, and the Ricci potential can be decomposed into $\phi_{ric} = \Psi - \psi $ as in lemma (\ref{lem-ric}). 
Then the term in line three follows from theorem 1.14 of \cite{BEGZ10}.

Finally we can define the conic Mabuchi functional as follows:
\begin{defn}
\label{def-cMabuchi}
The conic Mabuchi functional $\mathcal{M}_c$ acting on the space of all $\mathcal{C}^{2,\alpha,\beta}$ conic K\"ahler potentials is defined as 
\begin{equation}
\label{cMabuchi}
\mathcal{M}_c: = c(\beta)\mathcal{E} - \mathcal{E}^{Ric\omega - dd^c\psi}  + H_c, 
\end{equation}
where $\psi$ is the smooth metric on the line bundle $L_D$.
\end{defn}

\begin{corollary}
\label{cor-mabuchi}
Suppose $\varphi_t$ is a $\mathcal{C}^1$ curve in the space of $\mathcal{C}^{2,\alpha,\beta}$ conic K\"ahler potentials, 
and assume $\omega_{\varphi_t} \in\mathcal{R}$. Then the first variation of the conic Mabuchi functional can be computed as follows:
\begin{equation}
\label{cMabuchi2}
\frac{d}{dt}\mathcal{M}_c(\varphi_t) = n \int_{X-D} \varphi'_t(c(\beta)\omega_{\varphi} - Ric\omega_{\varphi})\wedge\omega_{\varphi}^{n-1}.
\end{equation}
Hence conic cscK metrics are critical points of $\mathcal{M}_c$. 
\end{corollary}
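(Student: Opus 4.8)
The plan is to differentiate the three summands of $\mathcal{M}_c = c(\beta)\mathcal{E} - \mathcal{E}^{Ric\omega - dd^c\psi} + H_c$ separately along the curve $\varphi_t$ and then add. Each piece already has a first variation at hand: for $\mathcal{E}$ and for $\mathcal{E}^{Ric\omega - dd^c\psi}$ I would quote Lemma (\ref{lem-1variation}) together with Remark (\ref{rem-energy}), the latter licensing the choice $\alpha = Ric\omega - dd^c\psi$ because this fixed conic closed $(1,1)$ current is a difference of two $\mathcal{C}^{2,\alpha,\beta}$ conic metrics; for $H_c$ I would use the variation recorded in (\ref{entropy4}), which is valid precisely because the hypothesis $\omega_{\varphi_t}\in\mathcal{R}$ places the Ricci potential in the quasi-psh class and allows the integration by parts of Theorem 1.14 of \cite{BEGZ10} to run on $X - D$. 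Thus no new analytic input is required; the work is purely in assembling terms.

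The decisive step is a cancellation of the two ``twisting'' contributions. Differentiating $H_c$ produces, besides the genuine curvature term $-n\int_{X-D}\varphi'_t\,Ric\omega_{\varphi}\wedge\omega_{\varphi}^{n-1}$, the two extra terms $+n\int_{X-D}\varphi'_t\,Ric\omega\wedge\omega_{\varphi}^{n-1}$ and $-n\int_{X-D}\varphi'_t\,dd^c\psi\wedge\omega_{\varphi}^{n-1}$ coming from the reference form and from the line-bundle metric; together these are $n\int_{X-D}\varphi'_t\,(Ric\omega - dd^c\psi)\wedge\omega_{\varphi}^{n-1}$. Differentiating $-\mathcal{E}^{Ric\omega - dd^c\psi}$ produces exactly the opposite of this. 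I would verify that the two match term for term and so cancel, leaving only the curvature term from $H_c$ together with the energy term $c(\beta)\,\frac{d}{dt}\mathcal{E}$. Re-grouping the survivors into a single integrand then yields the claimed formula (\ref{cMabuchi2}), namely $n\int_{X-D}\varphi'_t\,(c(\beta)\omega_{\varphi} - Ric\omega_{\varphi})\wedge\omega_{\varphi}^{n-1}$.

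For the critical-point statement I would substitute a conic cscK metric $\omega_{\varphi}$ into (\ref{cMabuchi2}). Being conic cscK means $R_{\varphi} = c(\beta)$ on $X - D$, and since the scalar curvature is the $\omega_{\varphi}$-trace of the Ricci form, this is equivalent to the pointwise proportionality $Ric\omega_{\varphi}\wedge\omega_{\varphi}^{n-1} = c(\beta)\,\omega_{\varphi}^n$ on $X - D$, the constant being identified with the coefficient appearing in $\mathcal{M}_c$ through the topological characterisation of Corollary (\ref{cor-ccsck1}). Consequently the $(n,n)$-form $(c(\beta)\omega_{\varphi} - Ric\omega_{\varphi})\wedge\omega_{\varphi}^{n-1}$ vanishes on $X - D$, the integrand in (\ref{cMabuchi2}) is identically zero, and hence $\frac{d}{dt}\mathcal{M}_c(\varphi_t) = 0$ for every admissible variation $\varphi'_t$, which is exactly the assertion that conic cscK metrics are critical points of $\mathcal{M}_c$.

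I do not anticipate a serious obstacle, as the statement is essentially a bookkeeping consequence of the first-variation formulas already established. The single delicate point is the reconciliation of the combinatorial factors of $n$: the variations of $\mathcal{E}$, of $\mathcal{E}^{\alpha}$ and of $H_c$ emerge from their definitions with different powers of $n$, so one must track these carefully to confirm both that the two twisting terms cancel with matching coefficients and that the surviving energy and curvature terms combine into the single factor $n$ displayed in (\ref{cMabuchi2}). Equally, one must read $c(\beta)$ as one and the same topological constant in the energy part of $\mathcal{M}_c$ and in the conic cscK equation, for it is exactly this coincidence---guaranteed by Corollary (\ref{cor-ccsck1})---that forces the Euler-Lagrange integrand to vanish, rather than merely be constant, at a conic cscK metric.
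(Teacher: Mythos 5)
Your proposal is correct and is essentially the paper's own proof: the paper disposes of this corollary in a single line by combining Lemma (\ref{lem-1variation}) (with Remark (\ref{rem-energy}) supplying the case $\alpha = Ric\,\omega - dd^c\psi$) and equation (\ref{entropy4}), which is precisely the assemble-and-cancel argument you spell out, including the vanishing of the Euler--Lagrange integrand at a conic cscK metric via Corollary (\ref{cor-ccsck1}). The factor-of-$n$ bookkeeping you flag as the delicate point is a genuine wrinkle, but it originates in the paper's own normalizations (the stated variation of $\mathcal{E}^{\alpha}$ in (\ref{energy3}) versus what definition (\ref{energy2}) actually yields), so your careful tracking of it only makes the argument more complete than the paper's one-line version.
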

\begin{proof}
Combine lemma (\ref{lem-1variation}) and equation (\ref{entropy4}), and the result follows.
\end{proof}

Notice that we can extend the domain of our conic entropy functional to the space $\mathcal{H}_{\Delta, \beta}$. Consider the function 
$x = \log\frac{\omega_{\varphi}^n |s|_{\psi}^{2-2\beta}}{\omega^n} $. Then $x\log x$ is bounded from above and below on $X$ for any $\varphi\in \mathcal{H}_{\Delta,\beta}$. And the conic entropy can be written as 
$$H_c (\varphi) = \int_{X} (x\log x) d\mu_0 , $$ where the measure $d\mu_0 = \omega^n/|s|_{\psi}^{2-2\beta}$ has finite mass on $X$. Hence it is well defined. 
Finally, we can also extend the domain of conic Mabuchi's functional to this larger space, since the energy part is always well defined once the potential is 
a bounded $quasi$-$psh$ function thanks to the Bedford-Talyor product. Therefore, we infer the following.

\begin{corollary}
For any $\varphi\in \mathcal{H}_{\Delta,\beta}$, the conic Mabuchi functional $\mathcal{M}_c(\varphi)$ is well defined.
In particular, we can restrict $\mathcal{M}_c$ to any $\cC^{1,\bar 1}_\b$ geodesic $\cG$. 
\end{corollary}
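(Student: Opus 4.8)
The plan is to check, term by term, that each of the three ingredients of $\cM_c = c(\beta)\cE - \cE^{Ric\omega - dd^c\psi} + H_c$ takes a finite value on an arbitrary $\varphi\in\mathcal{H}_{\Delta,\beta}$; the two energy pieces will be handled directly by Bedford--Taylor theory, while the entropy piece is the only one requiring a genuine estimate. The common input I would extract from the defining condition $\Delta_{\tilde\omega_\beta}\varphi\in L^\infty(X)$ is twofold. First, as recorded after Definition (\ref{def-c11}), such a $\varphi$ is $\cC^{1,\alpha}$ in the $w$-coordinate, hence continuous on the compact manifold $X$ and in particular $\varphi\in L^\infty(X)$. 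Second, since $\omega_\varphi\geq 0$ and $\tr_{\tilde\omega_\beta}\omega_\varphi = \tr_{\tilde\omega_\beta}\omega + \Delta_{\tilde\omega_\beta}\varphi$ is bounded (here $\tr_{\tilde\omega_\beta}\omega$ is bounded because $\tilde\omega_\beta$ of (\ref{533}) dominates the smooth form $\omega$ in the normal direction to $D$), I obtain a uniform comparison $0\leq\omega_\varphi\leq C\tilde\omega_\beta$ on $X-D$.

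For the energy terms I would argue as follows. Because $\varphi$ is a bounded $\omega$-psh function, the mixed Monge--Amp\`ere products $\omega_\varphi^{n-j}\wedge\omega^j$ and $\omega_\varphi^{n-1-j}\wedge\omega^j\wedge(Ric\omega-dd^c\psi)$ are well defined by Bedford--Taylor, do not charge the pluripolar set $D$, and have finite total mass (the masses being the cohomological pairings $[\omega]^n$, respectively $[\omega]^{n-1}\cdot[Ric\omega-dd^c\psi]$, up to combinatorial factors). Integrating the bounded continuous function $\varphi$ against these finite signed measures then produces finite numbers, so both $\cE(\varphi)$ and $\cE^{Ric\omega-dd^c\psi}(\varphi)$ are well defined. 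Here $Ric\omega-dd^c\psi$ is in fact a smooth closed $(1,1)$ form, so the argument is immediate; should one wish to allow a singular twisting form, Remark (\ref{rem-energy}) already covers that case.

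The entropy $H_c$ is the crux, and I expect it to be the one nontrivial point. Set $d\mu_0 = \omega^n/|s|^{2-2\beta}_\psi$, a finite measure on $X$ by the local isometric comparison $\tilde\omega_\beta^n\asymp d\mu_0$, and let $f = \omega_\varphi^n/d\mu_0$ denote the density. The comparison $\omega_\varphi\leq C\tilde\omega_\beta$ gives $\omega_\varphi^n\leq C^n\tilde\omega_\beta^n$, whence $f$ is bounded above by a constant $M$. Rewriting
$$H_c(\varphi) = \int_X \log\!\Big(\frac{\omega_\varphi^n\,|s|^{2-2\beta}_\psi}{\omega^n}\Big)\,\omega_\varphi^n = \int_X (f\log f)\, d\mu_0,$$
I would bound the integrand by the elementary fact that $t\mapsto t\log t$ is bounded below by $-e^{-1}$ on $[0,\infty)$ and is continuous, hence bounded, on the finite range $[0,M]$ (with the convention $0\log 0=0$ at the degeneracy locus of $\omega_\varphi^n$); since $\mu_0(X)<\infty$, the integral is then absolutely convergent. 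This is precisely where the bounded conic Laplacian is indispensable: all the finiteness ultimately rests on the upper density bound $f\leq M$ furnished by $\Delta_{\tilde\omega_\beta}\varphi\in L^\infty$, and I regard establishing this bound as the heart of the matter.

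Combining the three finite values shows that $\cM_c(\varphi)$ is well defined for every $\varphi\in\mathcal{H}_{\Delta,\beta}$. For the final assertion, a conic $\cC^{1,\bar 1}_\b$ geodesic $\cG$ has the property, noted just after Definition (\ref{def-c11}), that $\cG|_{X\times\{t\}}\in\mathcal{H}_{\Delta,\beta}$ for every $t\in\Sigma$; applying the first part slicewise then shows that $t\mapsto\cM_c(\varphi_t)$ is a well-defined real-valued function along $\cG$, with no further difficulty. This sets the stage for the convexity analysis of Section \ref{sec-4}.
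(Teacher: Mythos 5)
Your proposal is correct and follows essentially the same route as the paper: the energy terms are handled by Bedford--Taylor theory for bounded quasi-psh potentials, and the entropy is controlled by writing $H_c(\varphi)=\int_X (x\log x)\,d\mu_0$ with $d\mu_0=\omega^n/|s|^{2-2\beta}_{\psi}$ of finite mass and the density $x$ bounded above via the bounded conic Laplacian, so that $x\log x$ is bounded above and below. Your write-up merely fills in details the paper leaves implicit (the trace inequality giving $0\leq\omega_\varphi\leq C\tilde\omega_\beta$, and the slicewise restriction using $\mathcal{G}|_{X\times\{t\}}\in\mathcal{H}_{\Delta,\beta}$), and it also silently corrects a typo in the paper, where $x$ is mistakenly defined as the logarithm of the density rather than the density itself.
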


\subsection{Complex Hessian of conic energies}
In order to prove the convexity(subharmonicity) of the conic Mabuchi functional, we will investigate its second order derivative in this section. First, we claim that the energy part still behaves very well on the conic geodesics.

\begin{lemma}
\label{lem-second}
Let $\varphi_{\tau} = \Phi(\tau, \cdot)$ be the K\"ahler potentials of a geodesic connecting two conic K\"ahler metric. 
Then the complex Hessian of the energy functionals can be computed as
\begin{equation}
\label{hessian2}
d_{\tau}d^c_{\tau} \mathcal{E}(\varphi_{\tau}) = \int_{X} (\pi^*\omega + dd^c\Phi)^{n+1}; \ \ \ 
d_{\tau}d^c_{\tau} \mathcal{E}^{\alpha}( \varphi_{\tau} )= \int_{X} (\pi^*\omega + dd^c\Phi)^{n}\wedge\pi^*\alpha.
\end{equation}
\end{lemma}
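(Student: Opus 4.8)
The plan is to reduce the statement to the corresponding computation in the smooth case (formula \eqref{hessian}), which is classical, and then to justify that the regularization and limiting procedures are legitimate under the $\cC^{1,\bar 1}_\b$ regularity. First I would recall the first variation formulas \eqref{energy3} on the product $X\times\Sigma$: after promoting the real parameter $t$ to the complex variable $\tau=e^{t+is}$ and setting $\varphi_\tau=\Phi(\tau,\cdot)$, a direct computation — exactly as in the smooth theory — gives that the complex Hessian $d_\tau d^c_\tau \cE(\varphi_\tau)$ is the fiber integral of $(\pi^*\omega+dd^c\Phi)^{n+1}$ over $X$, and similarly $d_\tau d^c_\tau \cE^\alpha(\varphi_\tau)$ is the fiber integral of $(\pi^*\omega+dd^c\Phi)^{n}\wedge\pi^*\alpha$. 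The point is that the second variation of these polynomial energies is always expressible as a total Monge--Amp\`ere type mass on the total space; this is a formal identity among wedge products that does not see the cone singularity once all products make sense.

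The substance of the proof is therefore to show that these wedge products are well defined and that the differentiation under the integral sign, together with the integration by parts it requires, is valid in the conic setting. Here I would use exactly the machinery already set up in Section \ref{sec-2}: since $\Phi$ restricts to potentials in $\cC^{2,\a,\b}$ (more precisely in $\cH_{\Delta,\b}$ along each slice), the currents $\pi^*\omega+dd^c\Phi$ are bounded positive $(1,1)$ currents on $X\times\Sigma$, so the Bedford--Taylor products $(\pi^*\omega+dd^c\Phi)^{n+1}$ and $(\pi^*\omega+dd^c\Phi)^{n}\wedge\pi^*\alpha$ are well defined and charge no mass on the pluripolar set $D\times\Sigma$. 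The first variation computation in Lemma \ref{lem-1variation} already establishes the key integration-by-parts identity \eqref{IBP-time} in this regularity, appealing to Theorem 1.14 of \cite{BEGZ10}; the second variation is obtained by differentiating \eqref{energy3} once more, which amounts to the same type of integration by parts applied one further time. I would spell out the $n=2$ case and note that the general case is identical, mirroring the bookkeeping in Proposition \ref{IBP0}.

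The main obstacle I expect is the interchange of the $\tau$-derivatives with the fiber integral and the justification that no boundary contribution from the divisor appears when passing from the fiberwise formula to the total-space formula. Because the energy $\cE$ is only claimed to be affine and the energies are only continuous in the weak topology, the differentiation must be controlled by a dominated-convergence argument of the kind used in \eqref{IBP-time3}, where the bound $-\ep(t)\tilde\omega_\b \le dd^c\{\tfrac1t(\varphi_t-\varphi)-\varphi'\}\le \ep(t)\tilde\omega_\b$ furnishes a uniform majorant against $\tilde\omega_\b\wedge\omega_\varphi^{n-1-j}\wedge\omega^j$, which has finite mass by the finite-mass estimates of Lemma \ref{lem-msr} and Proposition \ref{lem-well}. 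The delicate point is that the $\cC^{1,\bar 1}_\b$ geodesic $\Phi$ need not be $\cC^2$ in $\tau$, so the Hessian $d_\tau d^c_\tau$ must be understood in the sense of currents on $\Sigma$; I would therefore test against a smooth compactly supported form on $\Sigma$, move the $d_\tau d^c_\tau$ onto the test form, and use Fubini together with the fact that $\Phi\in\cC^{1,\bar 1}_\b$ guarantees the product $(\pi^*\omega+dd^c\Phi)^{n+1}$ is a well-defined positive measure on $X\times\Sigma$, so that the two sides agree as currents on $\Sigma$.
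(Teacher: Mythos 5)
There is a genuine gap, and it sits exactly where your proposal puts the word ``Fubini.'' The whole content of Lemma \ref{lem-second} is the identity $\int_{\Sigma} \mathcal{E}(\varphi_{\tau})\, d_{\tau}d^c_{\tau}\chi = \int_{X\times\Sigma}\chi\,(\pi^*\omega+dd^c\Phi)^{n+1}$ for a potential $\Phi$ that is only $\cC^{1,\bar 1}_{\beta}$; knowing that $(\pi^*\omega+dd^c\Phi)^{n+1}$ is a well-defined positive measure (Bedford--Taylor, since $\Phi$ is a bounded quasi-psh potential) does not by itself imply that this measure computes the distributional Hessian of the fiberwise energy --- that implication is precisely what must be proved, and no formal Fubini argument gives it. Your main working route also does not go through: differentiating \eqref{energy3} once more via the integration-by-parts of Lemma \ref{lem-1variation} and the bound \eqref{IBP-time3} presupposes a $\mathcal{C}^1$ (indeed twice-differentiable) curve of $\mathcal{C}^{2,\alpha,\beta}$ potentials, and the geodesic has no $\tau$-derivative in that sense. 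You flag this obstacle yourself, but the retreat to the distributional formulation in your last paragraph does not repair it, because the distributional identity is then left unproven.

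The missing idea, which is how the paper argues, is to regularize $\Phi$ \emph{globally on the product space} rather than to differentiate along the fibers: since the geodesic potential is H\"older continuous on $X\times\Sigma$, Blocki--Kolodziej \cite{BK07} provides smooth approximants $\Phi_{\ep}$ decreasing to $\Phi$. For each $\ep$ the smooth formula \eqref{hessian} holds, so testing against $\chi$ gives $\int_{\Sigma}(d_{\tau}d^c_{\tau}\chi)\,\mathcal{E}(\varphi_{\ep}) = \int_{X\times\Sigma}\chi\,(\pi^*\omega+dd^c\Phi_{\ep})^{n+1}$. One then passes to the limit on both sides: the right-hand side converges by the Bedford--Taylor monotone convergence theorem for decreasing sequences of bounded psh functions, and the left-hand side converges by dominated convergence, since $\mathcal{E}(\varphi_{\ep})\to\mathcal{E}(\varphi)$ pointwise in $\tau$ with uniform bounds. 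This limiting argument is what replaces both your ``differentiate the first variation'' step and your ``Fubini'' step; note that it needs none of the BEGZ small-unbounded-locus machinery you invoke, only the classical Bedford--Taylor theory, because all potentials involved are bounded.
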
 
\begin{proof}
Notice that $\Phi|_{X\times\{ \tau\}} \in \mathcal{H}_{\Delta, \beta}$, for each $\tau\in \Sigma$, and then $\mathcal{E}(\varphi_{\tau})$ and 
$\mathcal{E}^{\alpha}(\varphi_{\tau})$ are well defined as discussed before. 
Since $\varphi$ are H\"older continuous functions on $X\times \Sigma$, there exists a smooth approximation sequence 
$\varphi_{\ep}$ decreasing to $\varphi$ uniformly on any open domain of $X\times\Sigma$ \cite{BK07}. 

Let $\chi(\tau)$ be a smooth compactly supported testing function on $\Sigma$, and by equation (\ref{hessian}) we have 
$$ \int_{\Sigma} (d_{\tau}d^c_{\tau} \chi) \mathcal{E}(\varphi_{\ep}) = \int_{X\times\Sigma} \chi (\pi^*\omega + dd^c\Phi_{\ep})^{n+1}.$$
Thanks to the Bedford-Talyor approximation theorem, we can pass to limits on the RHS side
$$ \int_{X\times\Sigma} \chi (\pi^*\omega + dd^c\Phi_{\ep})^{n+1} \rightarrow \int_{X\times\Sigma} \chi (\pi^*\omega + dd^c\Phi)^{n+1}.$$
Moreover, we have
$$\int_{\Sigma} (d_{\tau}d^c_{\tau}\chi )\mathcal{E}(\varphi_{\ep}) = \int_{\Sigma} \mathcal{E}(\varphi_{\ep}) dd^c\chi
\rightarrow \int_{\Sigma}  \mathcal{E}(\varphi) dd^c\chi , $$
by dominate convergence theorem, and our result follows.
\end{proof}

Now on the conic geodesic $\mathcal{G}$, let's try to take the complex Hessian of the conic Mabuchi functional in the current sense as follows. 
\begin{eqnarray}
\label{a}
\int_{\Sigma}\chi d_{\tau}d^c_{\tau} \mathcal{M}_c(\varphi_{\tau}) &=& -\int_{X\times\Sigma} \chi (Ric\omega - dd_{X}^c\psi)\wedge\mathcal{G}^n
\nonumber\\
& + & \int_{X\times\Sigma} \chi dd_{t,X}^c  \log \frac{\omega_{\varphi}^n}{\omega^n/ |s|^{2-2\beta}_{\psi}} \wedge\mathcal{G}^n,
\end{eqnarray}
where $\omega_{\varphi} = \mathcal{G}|_{X\times\{t\}}$, and the operator $d_{t,X}$ means taking derivatives on both fiber and time directions.
In order to prove convexity, it is enough to show the following inequality among currents:
\begin{equation}
\label{convex1}
dd_{t,X}^c \log \frac{\omega_{\varphi}^n}{\omega^n/ |s|^{2-2\beta}_{\psi}} \wedge\mathcal{G}^n \geq (Ric \omega - dd_{X}^c\psi)\wedge\mathcal{G}^n.
\end{equation}

\section{ Convexity of conic Mabuchi's functional }
\label{sec-4}
In this section, we will generalize our previous work \cite{CLP} to the conic setting. The basic idea is to construct a new approximation of the volume element of the geodesic $\mathcal{G}$, by means of solving a family of Monge-Amp\`ere equations. 
And readers are encouraged to read our previous work, because the idea used here is very similar to \cite{CLP}. 

Let's explain some notations for the use of later calculation. 
\begin{enumerate}

\item[(i)] we will abuse the notation $\omega_{\beta}$ for local model conic metric and the global one $\tilde{\omega}_{\beta}$(equation (\ref{533})). So we have 
$\omega_{\beta} = \omega + dd^c\varphi_{\beta}$. \\

\item[(ii)] since all energies only depend on the real part of the complex variable $\tau\in \Sigma$, it is reasonable to abuse $\tau$ and its real part $t$.\\

\item[(iii)] we write $d_X$ or $\partial_{X}$ as for taking derivatives on the fiber $X\times\{ t\}$ for some $t$, and $d_{t,X}$ or $\partial_{t,X}$ as for taking derivatives on
the product manifold $X\times \Sigma$. If there is no confusion, then we just use $d$ or $\partial$.\\

\item[(iv)] the constant $C$, $C'$... might be different from line to line, and we hope its self-explanation is clear enough.
\end{enumerate}

\subsection{Regularization}
Let $\omega_{\varphi_i}, i=1,2$ be two conic K\"ahler metrics, with $\varphi_i \in \mathcal{C}^{2,\alpha,\beta}$. 
Suppose there exists a conic $\mathcal{C}_{\beta}^{1,\bar{1}}$ geodesic $\mathcal{G}$ connecting these two points.
First, it can be approximated by taking the convolution with respect to mollifiers. Take $\Sigma'$ to be a relatively compact open subset of the annuals $\Sigma$. 
\begin{lemma}
\label{lem-app-geod}
On a slight smaller open domain $X\times \Sigma'$ in the product manifold $X\times\Sigma$, there exists a sequence $\mathcal{G}_{\delta}$ to approximate the geodesic $\mathcal{G}$ satisfying:
\begin{enumerate}

\item[(i)] the $(1,1)$ form $\mathcal{G}_{\delta}$ are non-singular, and there is a uniform constant $C$, such that we have:
$$ \mathcal{G}_{\delta}  \geq -2C\delta (\omega + \sqrt{-1} dt\wedge d\bar{t}) , $$
on $X\times\Sigma'$.\\

%$$\mathcal{G}_{\delta} + C\delta dd^c_{X}\varphi_{\beta, \delta} \geq -2C\delta (\omega + \sqrt{-1} dt\wedge d\bar{t}) , $$
%where $\varphi_{\beta, \delta}$ is a sequence of smooth $quasi$-$psh$ functions converging uniformly to $\varphi_{\beta}$, such that 
%\begin{equation}
%\label{111}
%| dd^c\varphi_{\beta,\delta} - dd^c \varphi_{\beta} |_{\omega_{\beta}} \rightarrow 0.
%\end{equation}

\item[(ii)] the coefficients ($g_{\delta, \alpha\bar{\beta}}, g_{\delta, \alpha\bar{t}}, g_{\delta, t\bar{\beta}}, g_{\delta, t\bar{t}}$) of  $\mathcal{G}_{\delta}$ are uniformly bounded in the sense of conic K\"ahler metrics. That is, in a local coordinate $(z_1,\cdots, z_n)$ near a point $p$ on the divisor, we have 
\begin{eqnarray}
\label{conic C11}
\sum_{\alpha,\beta>1}|g_{\delta, \alpha\bar{\beta}}|+ |g_{\delta, t\bar{t}}| &+& 2|z_1|^{1-\beta} (|g_{\delta, 1\bar{t}}|+ \sum_{\beta>1}|g_{\delta, 1\bar{\beta}}|)
\nonumber\\
&+& |z_1|^{2-2\beta}|g_{\delta, 1\bar{1}}| + 2\sum_{\beta>1}|g_{\delta, t\bar{\beta}}| < C,
\end{eqnarray}
where the uniform constant $C$ doesn't depend the point $p$, or the index $\delta$. \\

\item[(iii)] the coefficients satisfy the following convergence property on each fiber $X\times \{t\}$: \\
$g_{\delta,\alpha\bar{\beta}}\rightarrow g_{\alpha\bar{\beta}} $ for $\alpha,\beta >1$, in 
$L^{p}$ for any $p>1$; \\
$g_{\delta,1\bar{1}}\rightarrow g_{1\bar{1}}$ in $L^p$ for  $1< p < \frac{1}{1-\beta}$; \\
$g_{\delta, 1\bar{\beta}}\rightarrow g_{1\bar{\beta}}$ for $\beta >1$, and $g_{\delta, 1\bar{t}}\rightarrow g_{1\bar{t}}$ in $L^{p}$ for $1<p<\frac{2}{1-\beta}$;\\
$g_{\delta, t\bar{t}}\rightarrow g_{t\bar{t}}$ and $g_{\delta, t\bar{\beta}} \rightarrow g_{t\bar{\beta}} $ for $\beta>1$ in $L^p$ for any $p>1$. \\
And all the convergences are uniform with respect to $t\in \Sigma'$.\\

\item[(iv)] for potentials, we have 
$$\sup_{t\in\Sigma'} ||\varphi_{\delta} - \varphi ||_{\mathcal{C}^0(X\times\{ t\})} \rightarrow 0,$$
as $\delta\rightarrow 0$.

\end{enumerate}
\end{lemma}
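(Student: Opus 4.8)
The plan is to construct $\mathcal{G}_\delta$ as the convolution of the geodesic potential $\Phi$ (or rather its local potential representatives) with a standard family of mollifiers $\rho_\delta$ in the $w$-coordinate introduced in Section~\ref{sec-2}. Since the geodesic is only $\mathcal{C}^{1,\bar 1}_\beta$, the natural object to regularize is the local potential of the positive current $\mathcal{G}$, not $\mathcal{G}$ itself. First I would fix a finite cover of $X\times\Sigma$ by coordinate charts, distinguishing charts meeting the divisor $D$ (where the $w$-coordinate $w_1 = \rho^\beta e^{i\theta}$ is used) from charts away from $D$ (where ordinary convolution applies). On each chart I write $\mathcal{G} = \pi^*\tilde\omega_\beta + dd^c u$ for a local bounded psh function $u$, and set $u_\delta = u * \rho_\delta$, patching via a partition of unity. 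Because $\tilde\omega_\beta$ is a smooth form away from $D$ and quasi-isometric to the model metric, the decrease of $u_\delta \downarrow u$ and the standard distortion estimate for convolution of psh functions against a fixed Kähler form yield the lower bound in item~(i): $\mathcal{G}_\delta \geq -2C\delta(\omega + \sqrt{-1}\,dt\wedge d\bar t)$, where the loss $O(\delta)$ comes from the Lipschitz variation of $\tilde\omega_\beta$ over a mollification ball of radius $\delta$.

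For item~(ii), the key is that the mollification is performed in the $w$-coordinate, in which the geodesic has genuine $\mathcal{C}^{1,\bar 1}$ (hence bounded second-derivative) regularity by Definition~\ref{def-con} and the bound $0 \le \mathcal{G} \le C\mathcal{G}_\beta$. The weighted sum in (\ref{conic C11}) is precisely the expression of the Euclidean $\mathcal{C}^{1,\bar 1}$ bound on $u$ transported back to the $z$-coordinate: the weights $|z_1|^{2-2\beta}$, $|z_1|^{1-\beta}$ attached to the $1\bar 1$, $1\bar\beta$, $1\bar t$ entries are exactly the conversion factors between $dz_1$ and $\epsilon = \beta\rho^{\beta-1}dz_1$, so that each weighted coefficient equals a bounded $w$-coordinate coefficient of $dd^c u$. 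Since convolution does not increase the $L^\infty$ norm, these bounds pass from $\mathcal{G}$ to $\mathcal{G}_\delta$ with a constant independent of $\delta$ and of the base point $p$ (using that the charts are chosen from a fixed finite atlas and the estimate in Definition~\ref{def-con} is uniform). Item~(iv) is immediate: uniform continuity of $\varphi$ on the compact $X\times\overline{\Sigma'}$ gives $\|\varphi_\delta - \varphi\|_{\mathcal{C}^0(X\times\{t\})}\to 0$ uniformly in $t$, since the modulus of continuity of $\varphi$ controls $\sup|u*\rho_\delta - u|$.

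The main obstacle is item~(iii), the $L^p$ convergence of the individual coefficients with the \emph{sharp} exponent thresholds $\frac{1}{1-\beta}$, $\frac{2}{1-\beta}$. Here I cannot simply invoke $L^p$ convergence of mollifications of $L^1_{loc}$ functions, because the coefficients $g_{1\bar 1}, g_{1\bar\beta}, g_{1\bar t}$ blow up like $|z_1|^{-(2-2\beta)}$, $|z_1|^{-(1-\beta)}$ near $D$, and the admissible $p$ is dictated exactly by the local integrability of these singular weights: $\int_{U} |z_1|^{-p(2-2\beta)}\,dV < \infty$ forces $p(2-2\beta) < 2$, i.e. $p < \frac{1}{1-\beta}$, and similarly the threshold $\frac{2}{1-\beta}$ arises from the weight $|z_1|^{-(1-\beta)}$. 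The strategy is to establish convergence in $w$-coordinate first, where the objects are bounded and $L^p$ convergence for every $p$ is routine, and then transport back: a coefficient such as $g_{1\bar 1}$ equals $\beta^2\rho^{2\beta-2}$ times a bounded $w$-coordinate coefficient, so $\|g_{\delta,1\bar 1}-g_{1\bar 1}\|_{L^p(X\times\{t\})}$ is dominated by $\big\||z_1|^{-(2-2\beta)}\big\|_{L^p}$ times the vanishing $w$-coordinate $L^\infty$ (or $L^q$) error. I would combine a uniform bound (from item~(ii)) with pointwise a.e.\ convergence off $D$ and the dominated convergence theorem, the domination coming from the fixed integrable weight; the uniformity in $t\in\Sigma'$ follows because the $w$-coordinate regularization is performed simultaneously in the $\tau$-variable on the compact product and the modulus of continuity is uniform there. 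Verifying that the threshold exponents are attained and not merely approached will require care at the borderline, but the tension is entirely between the integrability of the fixed singular weights and the clean convergence available in the $w$-picture.
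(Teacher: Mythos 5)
There is a genuine gap, and it lies at the very core of your construction: you mollify the local potentials in the $w$-coordinate. The change of variables $w = \rho^{\beta}e^{i\theta}$ is \emph{not holomorphic} (the paper stresses this in Section \ref{sec-2}), and every tool you then invoke is tied to the holomorphic structure. The ``standard distortion estimate'' giving item (i) rests on the sub-mean-value property of psh functions on complex discs; convolution over $w$-balls, which are badly distorted relative to complex discs near $D$, does not preserve (quasi-)plurisubharmonicity even approximately, so the lower bound $\mathcal{G}_{\delta}\geq -2C\delta(\omega+\sqrt{-1}dt\wedge d\bar t)$ is unjustified. The same problem undermines (ii) and (iii): $dd^c$ is computed with respect to $z$, and $\partial_{z_1}\partial_{\bar z_1}$ written in $w$-coordinates is a variable-coefficient operator mixing $\partial_w\partial_{\bar w}$ with the non-$(1,1)$ terms $\partial_w\partial_w$, $\partial_{\bar w}\partial_{\bar w}$ (which are \emph{not} controlled by the $\mathcal{C}^{1,\bar 1}_{\beta}$ hypothesis, since that only bounds the complex Hessian). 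Hence $dd^c(u\ast_w\rho_{\delta})\neq (dd^c u)\ast_w\rho_{\delta}$, and your claim that ``convolution does not increase the $L^{\infty}$ norm'' of the conic-frame coefficients, as well as the ``clean convergence in the $w$-picture,'' do not follow. A secondary but real defect: patching by a partition of unity introduces error terms $dd^c\chi_{\alpha}\,(u_{\alpha,\delta}-u)$ and cross terms $d\chi_{\alpha}\wedge d^c(u_{\alpha,\delta}-u)$ that wreck the positivity control; this is exactly why one must glue with the regularized maximum instead.

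The paper's route is the opposite: it convolves the local psh potentials in the \emph{holomorphic} coordinates (Blocki--Kolodziej, theorem 2 of \cite{BK07}), glues with the regularized maximum, and sets $\mathcal{G}_{\delta}=\omega+dd^c_{t,X}\varphi_{\delta}$. Then (i) is immediate because holomorphic-coordinate convolution genuinely preserves psh-ness, and the conic bounds in (ii) come not from transporting an $L^{\infty}$ bound through the $w$-picture, but from a direct growth analysis of the convolution of functions with $|z_1|^{2\beta-2}$-type singularities (Lemma \ref{lem-grow}: $g_{\delta,1\bar 1}\leq C\delta^{2\beta-2}$ within distance $3\delta$ of $D$, and $\leq C(|s|^2+\delta^2)^{\beta-1}$ outside), together with the appendix estimates (Proposition \ref{prop-glue}, \ref{prop-b}) controlling the second derivatives of the regularized maximum. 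Your outline for deducing (iii) from (ii) (uniform weighted bound, a.e.\ convergence off $D$, dominated convergence against the fixed integrable weights, with thresholds $\frac{1}{1-\beta}$ and $\frac{2}{1-\beta}$) and your argument for (iv) are sound and match the paper's logic; but they presuppose (i) and (ii), which your $w$-coordinate construction cannot deliver.
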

\begin{proof}
The geodesic can be written as $\mathcal{G} = \omega + dd^c_{t,X}\varphi$, for a conic $\mathcal{C}_{\beta}^{1,\bar{1}}$ potential $\varphi$. Then we can construct a smooth approximation sequence $\mathcal{G}_{\delta}$ as in theorem 2 of \cite{BK07}. On each coordinate chart $U_{\alpha}$, we can take convolution of $\varphi$ with respect to a local smoothing kernel $\rho_{\delta}$ \cite{BK07}. And then glue each piece together to get $\varphi_{\delta}$, by using regularized-maximum function. Finally define 
$$\mathcal{G}_{\delta}: = \omega + dd^c_{t,X}\varphi_{\delta} .$$ 

From the basic properties of convolution and the fact $0\leq\mathcal{G}\leq\mathcal{G}_{\beta}$, we have 
$$ \mathcal{G}_{\delta}\geq -C\delta ( \omega + \sqrt{-1} dt\wedge d\bar{t}), $$

%Writing $\omega_{\beta} = \omega + dd^c_X \varphi_{\beta} $ as in equation (\ref{533}), we can reformulate our equation as $ dd^c\varphi_{\beta} \geq \gamma_{\delta}$, where
%$$ \gamma_{\delta}: =  -\frac{1}{C\delta}\mathcal{G}_{\delta} - (\omega + \sqrt{-1}dt\wedge d\bar{t}).$$
%Now by theorem 2 of \cite{BK07}, there is a sequence of smooth functions $\varphi_{\beta, j}$ decreasing uniformly to the continuous potential $\varphi_{\beta}$ 
%on $X\times\Sigma'$, such that
%$$ dd^c \varphi_{\beta, j} \geq  -\frac{1}{C\delta}\mathcal{G}_{\delta} - (1+\ep_j)(\omega + \sqrt{-1}dt\wedge d\bar{t}),$$
%where $\ep_j \rightarrow 0$ if $j\rightarrow +\infty$. Then for each $\delta$, we can pick up a large number $j(\delta)$, 
%such that $|\varphi_{\beta} - \varphi_{\beta, j(\delta)}| < \delta$. Put $\varphi_{\beta,\delta}:= \varphi_{\beta, j(\delta)}$. Finally, equation (\ref{111}) follows 
%since $\varphi_{\beta,\delta}$ is basically the convolution of $\varphi_{\beta}$ with respect to some local smoothing kernel \cite{BK07}.

And (ii) follows from the property of convolution on $psh$ functions ( see Appendix, Proposition (\ref{prop-glue})) and then (iii) and (iv) follow from (ii).

\end{proof}

In fact, we can take a closer look at the convolution process, and have a better growth control for the coefficients of the smoothing geodesics as follows.

\begin{lemma}
\label{lem-grow}
In a fixed small neighborhood $U$ of the divisor $D$,
the coefficients of $\mathcal{G}_{\delta}|_{X\times\{t\}}$ grows as:
\begin{enumerate}

\item[(a)]
for points $x\in U$ such that $\emph{dist}(x,D) < 3\delta$, we have
$$ 0 \leq g_{\delta,1\bar{1}} \leq C \delta^{2\beta-2}, \ \ \ |g_{\delta,1\bar{\beta}}| \leq C \delta^{\beta-1}, $$
where $C$ is a uniform constant independent of $\delta$ and $t\in \Sigma'$.\\

\item[(b)] for points $x\in U$ such that $\emph{dist}(x, D) \geq 3\delta$, we have 
$$ 0 \leq g_{\delta, 1\bar{1}} \leq \frac{C}{ ( |s|^2 + \delta^2 )^{1-\beta}}; \ \ \ |g_{\delta, 1\bar{\beta}}|^2\leq  \frac{C}{ ( |s|^2 + \delta^2 )^{1-\beta}},$$
for some uniform constant $C$.
\end{enumerate}
\end{lemma}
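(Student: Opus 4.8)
The plan is to work in a single fixed coordinate chart $U$ around a point $p\in D$ with $D=\{z_1=0\}$, where by construction $\varphi_{\delta}=\varphi*\rho_{\delta}$ is the convolution of the geodesic potential against the smoothing kernel $\rho_{\delta}(y)=\delta^{-2n}\rho(y/\delta)$ supported in $\{|y|<\delta\}$ (the regularized-maximum gluing of Lemma \ref{lem-app-geod} only intervenes near $\partial U$, away from $D$, where all local pieces already obey the same bounds). Since convolution commutes with the constant-coefficient operators $\partial_1\partial_{\bar 1}$ and $\partial_1\partial_{\bar\beta}$, I would write $g_{\delta,1\bar 1}=\omega_{1\bar 1}+(\partial_1\partial_{\bar 1}\varphi)*\rho_{\delta}$ and likewise for the mixed coefficient, where on $X-D$ the current $\partial_1\partial_{\bar 1}\varphi$ is an honest positive function. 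The lower bound follows from positivity of the mollified fibre current: the $z_1\bar 1$-coefficient of $\mathcal{G}$ is a nonnegative measure, preserved under convolution against $\rho_{\delta}\geq 0$, up to the uniform $O(\delta)$ smooth error already recorded in Lemma \ref{lem-app-geod}(i). The crucial input for the upper bounds is the quasi-isometry $0\leq\mathcal{G}\leq C\mathcal{G}_{\beta}$, which through the conic bound (\ref{conic C11}) supplies the pointwise estimates $g_{1\bar 1}\leq C|z_1|^{2\beta-2}$ and $|g_{1\bar\beta}|\leq C|z_1|^{\beta-1}$ on $X-D$; these are locally integrable since $2\beta-2>-2$.

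For part (a), at a point $x$ with $\mathrm{dist}(x,D)=|z_1(x)|<3\delta$, I would estimate the convolution directly. After a change of variables the mixed coefficient is controlled by $\delta^{-2n}\int_{|u-x|<\delta}|u_1|^{\beta-1}\,dV(u)$. Integrating out the $2n-2$ transverse real directions costs a factor $\delta^{2n-2}$, while the remaining integral over the complex $u_1$-disc satisfies $\int_{|u_1|<4\delta}|u_1|^{\beta-1}\,dA\sim\delta^{\beta+1}$, because for $|z_1(x)|<3\delta$ the $\delta$-disc around $z_1(x)$ lies inside $\{|u_1|<4\delta\}$. This yields $|g_{\delta,1\bar\beta}|\leq C\delta^{\beta-1}$, and the identical computation with exponent $2\beta-2$ gives $g_{\delta,1\bar 1}\leq C\delta^{2\beta-2}$.

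For part (b), at a point with $|z_1(x)|\geq 3\delta$ the supporting ball $\{|u-x|<\delta\}$ misses the divisor: there $|u_1|\geq|z_1(x)|-\delta\geq\tfrac{2}{3}|z_1(x)|$, so $|u_1|^{2\beta-2}\leq C|z_1(x)|^{2\beta-2}$ using $2\beta-2<0$. Since $\rho_{\delta}$ is a probability density the convolution is then bounded by $C|z_1(x)|^{2\beta-2}$, and the elementary comparison $|z_1|^2\leq|z_1|^2+\delta^2\leq\tfrac{10}{9}|z_1|^2$ valid for $|z_1|\geq 3\delta$, together with $|z_1|\sim|s|$, converts this into $g_{\delta,1\bar 1}\leq C(|s|^2+\delta^2)^{\beta-1}$. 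The mixed coefficient is handled in the same way, giving $|g_{\delta,1\bar\beta}|\leq C(|s|^2+\delta^2)^{(\beta-1)/2}$, which is the asserted bound on $|g_{\delta,1\bar\beta}|^2$. All constants depend only on the quasi-isometry constant and the fixed chart, hence are uniform in $\delta$ and in $t\in\Sigma'$.

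The main obstacle, I expect, is the bookkeeping at the interface of the two regimes and the justification that the gluing does not corrupt these one-chart bounds: one must check that the $\delta$-support of $\rho_{\delta}$ keeps each estimate self-contained inside $U$, that the exchange of $\partial_1\partial_{\bar 1}$ with convolution is legitimate across the singular current $dd^c\varphi$ (so that the positive-measure interpretation is genuinely available), and that the transition at $\mathrm{dist}(x,D)=3\delta$ is consistent. At that scale both bounds read $C\delta^{2\beta-2}$, so they patch continuously, and the numerical factor $3$ is precisely what guarantees the clean comparison $|u_1|\geq\tfrac{2}{3}|z_1|$ that drives case (b).
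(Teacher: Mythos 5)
Your core convolution estimates are correct and essentially the same as the paper's own: the paper likewise reduces to the normal direction by Fubini, uses the quasi-isometry bound $g_{1\bar{1}}\leq C|z_1|^{2\beta-2}$, and splits into the cases $\mathrm{dist}(x,D)<3\delta$ and $\mathrm{dist}(x,D)\geq 3\delta$ (treating $z=0$ and $0<|z|<3\delta$ separately, and in case (b) using $|z-\delta w|^2>|z/3|^2+(\delta/3)^2$ in place of your $|u_1|\geq\frac{2}{3}|z_1|$ --- same content). The only cosmetic difference is the mixed term: the paper deduces the bound on $|g_{\delta,1\bar{\beta}}|$ from the diagonal one by positivity of $\mathcal{G}_{\delta}$, namely $|g_{\delta,1\bar{\beta}}|^2\leq g_{\delta,1\bar{1}}g_{\delta,\beta\bar{\beta}}$, whereas you convolve the pointwise bound $|g_{1\bar{\beta}}|\leq C|z_1|^{\beta-1}$ directly; both are legitimate.

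There is, however, a genuine gap in your treatment of the gluing, which is not a technicality here. You assert that the regularized-maximum step ``only intervenes near $\partial U$, away from $D$.'' That is false: $D$ is a compact hypersurface and cannot be contained in a single chart, so points of $D$ necessarily lie in overlaps $V_{\alpha}\cap V_{\beta}$ where both local convolutions genuinely enter the regularized maximum; the paper's appendix even arranges the cover so that every point of $D$ lies in at most two charts --- not zero. The danger is concrete: by Lemma (\ref{lem-max}) the second derivative of the regularized maximum is only bounded by $|\partial_w\partial_w M_{\tau}|\leq C\tau^{-1}$, and in the chain-rule expansion (\ref{11111}) this factor multiplies $\partial_k w\,\partial_{\bar{l}}w$, a product of first derivatives of the difference of the two local regularizations. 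Near $D$ each such first derivative grows like $|z_1|^{\beta-1}$, so a naive estimate yields $\tau^{-1}|z_1|^{2\beta-2}$, which destroys the claimed growth $\delta^{2\beta-2}$. The paper closes exactly this hole inside its proof of the lemma: it first establishes the first-derivative growth control $|\partial\varphi_{\delta}/\partial z_1|^2\leq C\delta^{2\beta-2}$ (resp.\ $C(|s|^2+\delta^2)^{\beta-1}$), using that $\varphi$ is $C^{1,\alpha}$ in $w$-coordinates so that $\partial(\varphi_{\alpha,\delta}-\varphi)$ decays like $\delta^{\alpha\beta+\beta-1}$, and then invokes Propositions (\ref{prop-glue}) and (\ref{prop-b}), where the coupling $\delta=\delta(\ep)$ renders the term $\partial_w\partial_w M\cdot\partial w\,\bar{\partial}w$ harmless. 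Your proposal names this as ``the main obstacle'' but then dismisses it with an incorrect geometric claim; the lemma is a statement about the glued object $\mathcal{G}_{\delta}$, so without this argument the proof only covers the single-chart mollification, not $\mathcal{G}_{\delta}$ itself.
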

\begin{proof}
As before, let $p$ be a point on the divisor $D$, and $B_R$ is a coordinate ball centered at $p$ with radius $R$, such that locally $D$ is given by $\{z_1=0\}$. 
And we will prove the growths control of the coefficients of $\mathcal{G}_{\delta}$ uniformly in this ball $B_R$.

First notice that it's enough to prove $0\leq g_{\delta,1\bar{1}}\leq C\delta^{2\beta-2}$, and then the growth of $|g_{\delta,1\bar{\beta}}|$ is given 
by the positivity of $\mathcal{G}_{\delta}$ as 
$$ g_{\delta,1\bar{1}}g_{\delta, \beta\bar{\beta}} \geq |g_{\delta, 1\bar{\beta}}|^2. $$
Now only normal direction to the divisor is under consideration, and we can restrict our attention to this one dimensional disk by Fubini's theorem. 
Writing $ u = g_{1\bar{1}} \geq 0$, we have by convolution
$$ g_{\delta, 1\bar{1}} = u_{\delta} = \int u(z - \delta w) \rho(w) dV(w), $$
for $\delta < R/10$, and $\rho(w)$ is standard mollifier with support on the unit ball. 
Then there are three cases to discuss:
\begin{enumerate}

\item[(1)] $z=0$;

\item[(2)] $ |z| \geq 3\delta $;

\item[(3)] $0 < |z| < 3\delta $.

\end{enumerate}
For $(1)$, we can compute as 
\begin{eqnarray}
\label{growth}
u_{\delta}(0) &=& \int u(-\delta w)\rho(w)dw\wedge d\bar{w} 
\nonumber\\
&\leq& \frac{C}{\delta^{2-2\beta}} \int_{|w|<1} \frac{1}{|w|^{2-2\beta}} dw\wedge d\bar{w}
\nonumber\\
&\leq & C \delta^{2\beta-2}.
\end{eqnarray}
For $(3)$, notice that the ball $B_{\delta}(z)$ is contained in $B_{4\delta}(0)$ for all $|z|< 3\delta$. After enlarging the integration domain,
we have $ u_{\delta} \leq C (4\delta)^{2\beta-2}$ by the same computation as in equation (\ref{growth}).

For $(2)$, notice that $|z-\delta w|^2 > |z/3|^2 + (\delta/3)^2$ for all $|z|\geq 3\delta$. Therefore, we have 
\begin{eqnarray}
\label{grow2}
\int u(z-\delta w)\rho(w)dw\wedge d\bar{w} &\leq& \frac{3^{2\beta-2}C}{(|z_1|^2 + \delta^2)^{1-\beta}} \int \rho(w)dw\wedge d\bar{w}
\nonumber\\
&\leq & \frac{C'}{(|s|^2 + \delta^2)^{1-\beta}}.
\end{eqnarray}

Next we claim that the first derivative in the normal direction of $\varphi_{\delta}$ is controlled as 
$ \left|  \partial \varphi_{\delta} / \partial z_1\right|^2(x) \leq C \delta^{2\beta-2},$ 
for $\text{dist}(x,D) < 3\delta$; and
$$ \left|  \partial \varphi_{\delta} / \partial z_1\right|^2(x) \leq \frac{C}{(|s|^2 + \delta^2)^{1-\beta}}, $$
for $\text{dist}(x, D) \geq 3\delta$. 
This is because the geodesic potential $\varphi$ belongs to the H\"older space $C^{1,\alpha}$ in $w$-coordinates, due to the $\mathcal{C}^{1,\bar{1}}_{\beta}$ regularity.
Hence we have the growth control $|\partial \varphi / \partial z_1|^2 < C |z_1|^{2\beta -2}$ near the divisor, 
and the claim follows from a similar argument as above.

Finally, we need to take regularized maximum for different coordinate charts. 
For simplicity, let's consider a point $x$ contained only in two charts $U_{\alpha}$ and $U_{\beta}$.
Assume $\varphi_{\alpha}$ and $\varphi_{\beta}$ are local trivializations of $\varphi$ on $U_{\alpha}$ and $U_{\beta}$ near $x$. 
Take convolutions as $\varphi_{\delta,\alpha} = \varphi_{\alpha}*\rho_{\delta}$ and $\varphi_{\delta,\beta} = \varphi_{\beta}*\rho_{\delta}$. 
Put $M_X$ to be the regularized maximum operator for two variables. 
Notice that the value, the first and second order derivatives of $M_{X}$ only depend on the geometry of $X$ (see Proposition (\ref{prop-b}), Appendix). 
Therefore, we have the same growth rates on its derivatives with the previous estimates of local convolutions

\end{proof}

In order to find a better approximation,
we will introduce a triple of index $\eta: = (\ep, \delta, \ep')$, for small positive real numbers $\ep$, $\delta$ and $\ep'$, and in prior, they are independent of each other. Define a semi-positive $(1,1)$ form as 
$$\mathcal{G}_{\delta}' := \mathcal{G}_{\delta} + 2C\delta(\omega+ \sqrt{-1}dt\wedge d\bar{t}) > 0.$$
Put $\Delta_{\eta}: = \mathcal{G}_{\delta}' / \ep$, and then the class $c_1(X) + \{ \Delta_{\eta} \} |_{X\times\{ t\}}$ is K\"ahler on each fiber. 
Then we can solve the following family of Monge-Amp\`ere equations by the famous result of Yau \cite{Yau} as 
\begin{equation}
\label{MA}
(\Theta_{\omega}(K_X + D) + \Delta_{\eta} + dd^c \phi_{t,\eta})^n = \ep^{-n} e^{\phi_{t,\eta}} \frac{\omega^n} {( |s|^2_{\psi}+ \ep' )^{1-\beta} }, \ \ \ \ (\dagger_{\eta})
\end{equation}
on each fiber $X\times\{t \}$. And the solution $\phi_{t,\eta}$ satisfies the positivity condition:
$$ \rho: = \left.(  \Theta_{\omega}(K_X+D) + \Delta_{\eta} + dd^c \phi_{t,\eta}  ) \right|_{X\times{\{t\}}} > 0. $$ 

According to theorem 4.1 of \cite{CLP}, we infer that 
\begin{equation}
\label{positive}
 \Xi: =  \Theta_{\omega}(K_X+D) + \Delta_{\eta} + dd^c \phi_{t,\eta} > 0, 
\end{equation}
as a smooth closed positive $(1,1)$ form on the product manifold $X\times\Sigma'$. 
In fact, it is enough to prove the Ricci curvature of the K\"ahler form $\rho$ is bounded below by $-1$. 
But this is true thanks to the positivity of the following form
\begin{equation}
dd^c \log (|s|^2 + \ep'e^{\psi}) \geq 0, 
\end{equation}
for any positively curved smooth metric $\psi$ on the line bundle $D$.

\subsection{ $C^0$ estimate and equicontinuity } 
We will prove the $C^0$ estimate for the family of equations $(\dagger_{\eta})$. And the claim is as follows.
\begin{lemma}
\label{lem-C0}
Let $\phi_{t,\eta}$ be the solution of equation $(\dagger_{\eta})$. Then there exits a proper choice of $\ep' = \ep'( \delta)$, 
where $\ep'\rightarrow 0$ if $\delta\rightarrow 0$, such that on each fiber $X\times\{t \}$ we have:
$$ \sup_{X}\phi_{t,\eta} \leq C,\ \ \  -\ep \inf_{X} \phi_{t,\eta} \leq C, $$ for a uniform constant $C$ independent of $\eta$ and $t\in \Sigma'$.
\end{lemma}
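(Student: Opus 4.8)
The plan is to prove both bounds by the pointwise maximum principle applied to the Monge-Amp\`ere equation $(\dagger_{\eta})$ on each fixed fibre $X\times\{t\}$, using that $\det$ is monotone on the cone of positive Hermitian forms, and then feeding in the precise coefficient growth of Lemma (\ref{lem-grow}). The only genuine freedom is the tie between the three smoothing scales, and the whole argument hinges on choosing $\ep'\asymp\delta^2$, which is exactly the balancing announced in the introduction.

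For the upper bound I would evaluate at a point $x_0$ where $\phi_{t,\eta}$ attains its maximum. There $dd^c\phi_{t,\eta}(x_0)\le 0$, so $0<\rho(x_0)=\Theta_\omega(K_X+D)+\Delta_\eta+dd^c\phi_{t,\eta}\le \Theta_\omega(K_X+D)+\Delta_\eta$ at $x_0$; in particular $\Theta_\omega(K_X+D)+\Delta_\eta$ is automatically positive at the maximum, and monotonicity of $\det$ gives
\[ \ep^{-n}e^{\phi_{t,\eta}(x_0)}\frac{\omega^n}{(|s|^2+\ep')^{1-\beta}}=\rho^n(x_0)\le \big(\Theta_\omega(K_X+D)+\Delta_\eta\big)^n(x_0). \]
Expanding the right-hand side, the leading term is $\ep^{-n}(\mathcal{G}_\delta')^n$ and the remaining ones are of lower order in $\ep^{-1}$; after cancelling the common factor $\ep^{-n}$ the estimate reduces to comparing $(\mathcal{G}_\delta')^n$ with the density $\omega^n/(|s|^2+\ep')^{1-\beta}$. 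By Lemma (\ref{lem-grow}) one has $(\mathcal{G}_\delta')^n\le C\,\omega^n/(|s|^2+\delta^2)^{1-\beta}$ near $D$, so the exponential is bounded by $C\big((|s|^2+\ep')/(|s|^2+\delta^2)\big)^{1-\beta}+O(\ep)$. Choosing $\ep'\le\delta^2$ makes this ratio $\le 1$, and the $O(\ep)$ error is harmless since each lower-order term then carries a positive power of $\ep$; hence $\sup_X\phi_{t,\eta}\le C$.

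For the (much weaker) lower bound I would rewrite $(\dagger_{\eta})$ relative to a genuine K\"ahler representative $\hat\omega$ of the fibre class, say $\Theta_\omega(K_X+D)+\Delta_\eta=\hat\omega+dd^c\kappa$, since $\Theta_\omega(K_X+D)+\Delta_\eta$ itself need not be positive along the degenerate directions of $\mathcal{G}_\delta'$. Setting $u=\kappa+\phi_{t,\eta}$, the equation becomes $(\hat\omega+dd^cu)^n=\ep^{-n}e^{u-\kappa}\,d\mu$ with $d\mu=\omega^n/(|s|^2+\ep')^{1-\beta}$, and at a minimum point of $u$ the inequality $(\hat\omega+dd^cu)^n\ge\hat\omega^n$ yields a pointwise lower bound of the form $u\ge n\log\ep+\kappa+\log\big(\hat\omega^n/d\mu\big)$. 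The decisive observation is that $\hat\omega^n$ is itself of size $\ep^{-n}(\mathcal{G}_\delta')^n$, so the explicit $n\log\ep$ cancels the $-n\log\ep$ hidden in $\log\hat\omega^n$, leaving $\phi_{t,\eta}\ge -\mathrm{osc}(\kappa)+(1-\beta)\log(\ep'/\delta^2)$ up to bounded terms. Now $\mathrm{osc}(\kappa)=O(\ep^{-1})$ because $\kappa$ is essentially $\ep^{-1}$ times the uniformly bounded potential $\varphi_\delta$ of Lemma (\ref{lem-app-geod})(iv), and the remaining logarithm vanishes once $\ep'\asymp\delta^2$; multiplying by $\ep$ therefore gives $-\ep\inf_X\phi_{t,\eta}\le C$ uniformly in $\eta$ and $t$.

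The main obstacle is precisely this lower bound: the reference form $\Theta_\omega(K_X+D)+\Delta_\eta$ fails to be positive along the degenerate directions of the smoothed geodesic, and every error term must be controlled uniformly across the three independent parameters $\ep,\delta,\ep'$. The crux is that the upper bound forces $\ep'\le\delta^2$ while the lower bound forces $\ep'\gtrsim\delta^2$, so the only compatible choice is $\ep'\asymp\delta^2$; verifying that with this choice all the logarithmic singularities coming from the measure $d\mu$ and from $\hat\omega^n$ cancel rather than accumulate is the technical heart of the estimate.
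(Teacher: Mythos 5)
Your upper bound is essentially the paper's own argument (maximum principle at a maximum point, monotonicity of the determinant, the growth control of Lemma (\ref{lem-grow}), and the coupling $\ep'=\delta^2$), and it is fine. The genuine gap is in the lower bound. Your ``decisive observation'' that a genuine K\"ahler representative $\hat\omega$ of the fibre class has $\hat\omega^n$ of size $\ep^{-n}(\mathcal{G}'_{\delta})^n$ is unjustified, and it is false in the direction you need it: the minimum principle requires a \emph{lower} bound on $\hat\omega^n/d\mu_{\beta,\ep'}$ at the minimum point, i.e. $\hat\omega^n\geq c\,\ep^{-n}\omega^n/(|s|^2+\ep')^{1-\beta}$, whereas Lemma (\ref{lem-grow}) provides only \emph{upper} bounds on the coefficients of $\mathcal{G}'_{\delta}$; the smoothed geodesic restricted to a fibre can be arbitrarily degenerate, so $(\mathcal{G}'_{\delta})^n$ admits no useful lower bound at all. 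With any concrete admissible choice, say $\hat\omega=\Theta_{\omega}(K_X+D)+\ep^{-1}(1+2C\delta)\omega$, one has $\hat\omega^n\sim\ep^{-n}\omega^n$, and your pointwise estimate then leaves an uncancelled term $(1-\beta)\log(|s|^2+\ep')$, which at minimum points near $D$ is as negative as $(1-\beta)\log\ep'$. Multiplying by $\ep$ does not rescue this: in the way the estimate is used (Lemma (\ref{cor-geod})) one sends $\delta\rightarrow 0$, hence $\ep'(\delta)\rightarrow 0$, \emph{for fixed} $\ep$, so $\ep(1-\beta)\log\ep'(\delta)\rightarrow-\infty$ and the bound is not uniform in $\eta$. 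In short, the logarithmic singularity of the measure does not cancel; it accumulates.

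The missing ingredient --- and the actual content of the paper's proof --- is a reference metric whose volume form matches the singular density \emph{exactly} and whose potential is bounded uniformly in $\ep'$: one solves the auxiliary equation $(\omega+dd^c\varphi_{\beta,\ep'})^n=d\mu_{\beta,\ep'}$ (equation (\ref{MA2})) by Yau \cite{Yau}, and invokes Kolodziej \cite{Kol} to get a uniform $\mathcal{C}^{\alpha}$ (in particular $L^{\infty}$) bound on $\varphi_{\beta,\ep'}$ independent of $\ep'$. Incorporating $\tfrac12\varphi_{\beta,\ep'}$ together with $\varphi_{\delta}$ into the unknown $\tau_{\eta}=\ep\phi_{t,\eta}+\varphi_{\delta}-\tfrac12\varphi_{\beta,\ep'}$, the minimum principle compares the Monge-Amp\`ere measure against $d\mu_{\beta,\ep'}$ itself, giving a ratio bounded below by $2^{-n}$, and all remaining terms are oscillations of uniformly bounded potentials. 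Note also that your diagnosis that ``the lower bound forces $\ep'\gtrsim\delta^2$'' is a misreading of where the tension lies: in the paper's proof the lower bound needs \emph{no} relation between $\ep'$ and $\delta$; the coupling $\ep'=\delta^2$ is dictated solely by the upper bound (and later by the $C^2$ estimate of Lemma (\ref{lem-C2})).
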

\begin{proof}
We are going to apply maximum principle on the fiber $X\times\{ t\}$. Rewriting the Monge-Amp\`ere equation as 
\begin{equation}
\label{MA1}
(\ep\Theta + \mathcal{G}'_{\delta} + \ep dd^c\phi_{t,\eta} )^n = \frac{e^{\phi_{t,\eta}} \omega^n}{ (|s|^2_{\psi} + \ep')^{1-\beta}}
\end{equation}

Let the point $p\in X$ be as $\phi_{t,\eta}(p) = \max_{X} \phi_{t,\eta}$, and then $dd^c\phi_{t,\eta}(p) \leq 0$ by maximum principle. This implies 
$( \ep\Theta + \mathcal{G}'_{\delta} )(p) > 0$, and we have 
$$ \phi_{t,\eta} \leq \phi_{t,\eta}(p) \leq \log \frac{(\ep\Theta + \mathcal{G}'_{\delta})^n}{d\mu_{\beta,\ep'}}(p), $$
where $ d\mu_{\beta,\ep'} = \omega^n / (|s|^2_{\psi} + \ep')^{1-\beta}$. Now consider the following function 
$$ f(\delta,\ep'): = \frac{( I + \mathcal{G}'_{\delta})^n (|s|_{\psi}^{2} + \ep' )^{1-\beta}}{\omega^n} (p),$$
where $I$ is the identity matrix, and we can always assume $\ep\Theta \leq I$. 

The sequence of volume forms $( I+\mathcal{G}'_{\delta})^n$ 
is a smooth approximation of the singular volume form $(I + \mathcal{G})^n$, which is uniformly bounded by a constant $C'$ times $\omega^n/ |s|^{2-2\beta}_{\psi}$. 
Thanks to lemma (\ref{lem-grow}), we can find $\ep'(\delta)$ as function of $\delta$ decreasing fast enough such that 

$$ (I + \mathcal{G}'_{\delta})^n \leq \frac{C' k(\delta)}{(|s|^2_{\psi} + \ep'(\delta))^{1-\beta}} \omega^n,$$
where $\ep'(\delta)\rightarrow 0$ and $k(\delta)\rightarrow 1$ as $\delta\rightarrow 0$. 
In fact, put $\ep' = \delta^2$, and then point (a) of lemma (\ref{lem-grow}) implies that 
$$ (I + \mathcal{G}'_{\delta})^n (|s|^2 + \delta^2)^{1-\beta} /\omega^n \leq 10^{2-2\beta}C, $$  
for points $x\in U$ such that $\text{dist}(x, D) < 3\delta$. And also point (b) of lemma (\ref{lem-grow}) directly implies 
$$ (I + \mathcal{G}'_{\delta})^n (|s|^2 + \delta^2)^{1-\beta} /\omega^n \leq C, $$
for points $x\in U$, such that $\text{dist}(x, D)\geq 3\delta$. 
Hence we conclude that $f(\delta,\ep'(\delta))\leq 2C'$ for all $\delta$ small enough, and the upper bound follows since $\phi_{t,\eta} \leq f(\delta)$. 

In order to find the lower bound, we need to rewrite equation (\ref{MA}) again. First let $\varphi_{\beta,\ep'}$ be the solution of the following Monge-Amp\`ere equation
\begin{equation}
\label{MA2}
(\omega + dd^c\varphi_{\beta,\ep'})^n = d\mu_{\beta,\ep'}.
\end{equation}
The solution exists because of Yau's result \cite{Yau} again, and by Kolodziej \cite{Kol}, the H\"older norm $|\varphi_{\beta, \ep'}|_{\mathcal{C}^{\alpha}} $ is uniformly bounded independent of $\ep'$. Now put
$$ \Omega_{\beta, \ep'}: = \omega + 2C\delta (\omega + \sqrt{-1}dt\wedge d\bar{t}) + \frac{1}{2}dd^c\varphi_{\beta,\ep'},$$
and $$\tau_{\eta}: = \ep\phi_{t,\eta} + \varphi_{\delta} - \frac{1}{2}\varphi_{\beta,\ep'}.$$
Then equation (\ref{MA}) can be rewritten as 
\begin{equation}
\label{MA4}
(\ep\Theta + \Omega_{\beta,\ep'} + dd^c \tau_{\eta})^n = \exp\{ \frac{1}{\ep} (\tau_{\eta} - \varphi_{\delta} + \frac{1}{2}\varphi_{\beta,\ep'}  ) \} d\mu_{\beta, \ep'}.
\end{equation}
By the minimum principle, on the point $q\in X$ where $\tau_{\eta}(q) = \min_{X} \tau$, we have
$$ \tau_{\eta} \geq \tau_{\eta}(q)\geq \ep\log\frac{(\ep \Theta + \Omega_{\beta,\ep'})^n}{d\mu_{\beta,\ep'}} (q) + \varphi_{\delta}(q) - \frac{1}{2}\varphi_{\beta,\ep'}(q), $$
where $\ep$ is small enough such that $\ep\Theta + \omega/2 >0$. Therefore, we have
 $$(\ep\Theta + \Omega_{\beta,\ep'})^n \geq 2^{-n}(\omega_{\beta,\ep'})^n \geq 2^{-n} d\mu_{\beta,\ep'},$$
and the lower bound of $\ep\phi_{t,\eta}$ follows since $\varphi_{\delta}$ and $\varphi_{\beta,\ep'}$ are all uniformly bounded.
\end{proof}

\begin{lemma}
\label{lem-equi}
For each $\ep$, the family $\phi_{t,\eta}$ obtained by piecing together fiber-wise solutions is equicontinuous.
\end{lemma}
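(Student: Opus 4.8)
The plan is to produce, for fixed $\ep$, a modulus of continuity for the function $(t,x)\mapsto \phi_{t,\eta}(x)$ on $X\times\Sigma'$ that is independent of the remaining parameters $(\delta,\ep'(\delta))$; equicontinuity of the family is precisely the existence of such a uniform modulus. I would treat the time direction and the fibre direction separately and then combine them by the triangle inequality. Throughout I use that, with $\ep$ fixed, Lemma \ref{lem-C0} makes $\phi_{t,\eta}$ uniformly bounded in $C^0$: indeed $\sup_X\phi_{t,\eta}\le C$ and $-\ep\inf_X\phi_{t,\eta}\le C$ force $\|\phi_{t,\eta}\|_{C^0}\le C/\ep$, uniformly in $\eta$ and $t$.

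For the time direction, write $\theta_t:=\big(\Theta_\omega(K_X+D)+\Delta_\eta\big)|_{X\times\{t\}}$ for the fibre reference form and fix $t_0$ together with a nearby $t$. Since $\mathcal{G}_\delta|_{X\times\{t\}}=\omega+dd^c_X\varphi_\delta(t,\cdot)$ lies in the fixed class $[\omega]$ and $\Theta_\omega(K_X+D)$ is independent of $t$, one has $\theta_t=\theta_{t_0}+dd^c_X h$ with $h=\ep^{-1}\big(\varphi_\delta(t,\cdot)-\varphi_\delta(t_0,\cdot)\big)$. The $\mathcal{C}^{1,\bar 1}_\beta$ control, namely the uniform bound on $g_{\delta,t\bar t}$ in Lemma \ref{lem-app-geod}(ii) together with the $C^0$ convergence in Lemma \ref{lem-app-geod}(iv), yields a uniform Lipschitz-in-$t$ bound for $\varphi_\delta$, hence $\|h\|_{C^0}\le \ep^{-1}L\,|t-t_0|$ with $L$ independent of $\delta,\ep'$. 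Setting $\tilde\phi_t=\phi_{t,\eta}+h$, the equation $(\dagger_{\eta})$ on $X\times\{t\}$ reads $(\theta_{t_0}+dd^c\tilde\phi_t)^n=e^{\tilde\phi_t}\,e^{-h}\,G$, while $\phi_{t_0,\eta}$ solves $(\theta_{t_0}+dd^c\phi_{t_0,\eta})^n=e^{\phi_{t_0,\eta}}\,G$, where $G=\ep^{-n}\omega^n/(|s|^2_\psi+\ep')^{1-\beta}$. Comparing these two Monge--Amp\`ere equations at the maxima of $\pm(\phi_{t_0,\eta}-\tilde\phi_t)$ and using that both Monge--Amp\`ere forms are positive gives $\|\phi_{t_0,\eta}-\tilde\phi_t\|_{C^0}\le\|h\|_{C^0}$, whence
$$\|\phi_{t,\eta}-\phi_{t_0,\eta}\|_{C^0(X)}\le 2\|h\|_{C^0}\le \frac{2L}{\ep}\,|t-t_0|,$$
a uniform Lipschitz estimate in time.

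For the fibre direction the reference form $\theta_t$ degenerates to a conic form as $\delta\to0$ (its normal coefficient blows up like $\delta^{2\beta-2}$ by Lemma \ref{lem-grow}), so Kolodziej's estimate cannot be applied to $(\dagger_{\eta})$ directly; this is the main obstacle. I would remove the degeneration by the substitution $u=\varphi_\delta+\ep\,\phi_{t,\eta}$, which absorbs the singular part of $\Delta_\eta$ into the unknown: multiplying $(\dagger_{\eta})$ by $\ep^n$ turns it into $(A_\delta+dd^c u)^n=f\,\omega^n$, where $A_\delta=\ep\,\Theta_\omega(K_X+D)+(1+2C\delta)\omega$ is uniformly K\"ahler (comparable to $\omega$ for $\ep$ fixed and $\delta$ small) and $f=e^{\phi_{t,\eta}}(|s|^2_\psi+\ep')^{-(1-\beta)}$. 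By the $C^0$ bound the factor $e^{\phi_{t,\eta}}$ is uniformly bounded, and since $1-\beta<1$ the weight $(|s|^2_\psi+\ep')^{-(1-\beta)}$ has uniformly bounded $L^p$ norm for any $1<p<\tfrac{1}{1-\beta}$; hence $\|f\|_{L^p}$ is bounded independently of $\delta,\ep',t$. Kolodziej's theorem \cite{Kol} then provides a modulus of continuity for $u$ on $X$ depending only on this $L^p$ bound and on $\omega$, in particular independent of $\eta$ and $t$. Since $\varphi_\delta$ has a uniform modulus of continuity by Lemma \ref{lem-app-geod}(iv) and the continuity of the geodesic potential $\varphi$, subtracting it shows that $\phi_{t,\eta}=\ep^{-1}(u-\varphi_\delta)$ has a spatial modulus of continuity $\varpi$ uniform in $\delta,\ep',t$.

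Combining the two estimates, for $P=(t,x)$ and $P'=(t',x')$ one obtains
$$|\phi_\eta(t,x)-\phi_\eta(t',x')|\le |\phi_{t,\eta}(x)-\phi_{t,\eta}(x')|+|\phi_{t,\eta}(x')-\phi_{t',\eta}(x')|\le \varpi\big(d(x,x')\big)+\frac{2L}{\ep}|t-t'|,$$
a modulus of continuity uniform over the whole family, which is exactly equicontinuity. The delicate point, as indicated, is the fibre estimate: one must first convert the conically degenerate reference form into a genuinely non-degenerate K\"ahler one, and the uniform $L^p$ bound on the density---available only because the cone angle satisfies $1-\beta<1$---is what makes the resulting constants independent of the approximation parameters.
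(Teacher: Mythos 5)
Your argument is correct. The time-direction half is essentially the paper's own proof: the paper likewise sets $\xi_\eta(t)=\ep\phi_{t,\eta}+\varphi_\delta$, compares the Monge--Amp\`ere equations at $t_0$ and $t_1$ through the maximum principle, and feeds in a uniform Lipschitz-in-$t$ control on $\varphi_\delta$ (via property (iv) of Lemma \ref{lem-app-geod}) to get $\sup_X|\phi_{t_0,\eta}-\phi_{t_1,\eta}|\le \ep^{-1}C|t_0-t_1|$. Where you genuinely differ is the fibre direction: the paper disposes of it with a second, rather terse, maximum-principle comparison on each fiber, producing H\"older continuity $\ep^{-1}C\,d(x_0,x_1)^{\alpha'}$ with respect to the model cone distance (parallel to Theorem 4.2 of \cite{CLP}), whereas you rewrite $(\dagger_\eta)$ as $(A_\delta+dd^c u)^n=f\,\omega^n$ with $u=\varphi_\delta+\ep\phi_{t,\eta}$, observe that $A_\delta$ is uniformly K\"ahler and that $f=e^{\phi_{t,\eta}}(|s|^2_\psi+\ep')^{-(1-\beta)}$ is uniformly in $L^p$ for $1<p<\frac{1}{1-\beta}$, and then invoke Kolodziej's uniform continuity/H\"older estimate. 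This is a legitimate alternative; indeed it is the same tool the paper already uses inside Lemma \ref{lem-C0} to bound $|\varphi_{\beta,\ep'}|_{\mathcal{C}^{\alpha}}$ uniformly in $\ep'$. Your route buys constants depending only on $\|f\|_{L^p}$ and $\|u\|_{C^0}$ and a modulus measured against the fixed smooth metric, at the price of quoting a deeper stability theorem; the paper's comparison argument is more self-contained and yields an explicit cone-distance H\"older exponent. Two points are worth a sentence to make your version airtight: first, the background form $A_\delta$ varies with $\delta$, so you should remark that Kolodziej's constants are uniform over the smoothly convergent family $A_\delta\to\ep\Theta_\omega(K_X+D)+\omega$ (which is K\"ahler for $\ep$ small, an assumption the paper also makes); second, your Lipschitz-in-$t$ step, deduced from the bound on $g_{\delta,t\bar t}$ together with the uniform $C^0$ bound, needs the interior elliptic gradient estimate on $\Sigma'\Subset\Sigma$ --- this is fine precisely because $\Sigma'$ is relatively compact, and is equivalent in effect to the paper's direct use of $|\partial\varphi/\partial t|\le C$ for the $\mathcal{C}^{1,\bar 1}_{\beta}$ geodesic.
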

\begin{proof}
The proof is very similar to what we did before in theorem 4.2 of \cite{CLP}, and we recall it here for the convenience of readers. For fixed $\ep$, put 
$\Psi_{t,\eta}: = \ep \Xi |_{X\times\{t\}}$, and then we can compare equation (\ref{MA}) at $t_0$ and $t_1$ for the same index $\delta$ and $\ep'$:
$$\left( \Psi_{t_0, \eta} + dd^c(\xi_{\eta}(t_1) - \xi_{\eta}(t_0)) \right)^n  =
 \exp\{ \frac{1}{\ep}(\xi_{\eta}(t_1) - \xi_{\eta}(t_0) -\varphi_{\delta}(t_1) + \varphi_{\delta}(t_0)  )\} (\Psi_{t,\eta})^n, $$
where $\xi_{\eta}(t) = \ep\phi_{t,\eta} + \varphi_{\delta}$. By maximum principle and property (iv) of lemma (\ref{lem-app-geod}), 
we claim that the difference of $\ep\phi_{t,\eta}$ can be estimated as 
$$ \sup_X |\phi_{t_0,\eta}(x) - \phi_{t_1, \eta}(x) | \leq \ep^{-1}C | t_0 - t_1|. $$
This is because for fixed $x$, we have 
$  \left| \frac{\partial}{\partial t} \varphi (t,x) \right| < C$
for some uniform constant $C$ independent of $x$ and $t$. Therefore, we infer the following estimate: 
$$ | \varphi_{\delta}(t_0, x) - \varphi_{\delta}(t_1, x)  | \leq C |t_0 - t_1|, $$ for some uniform constant on $X\times\Sigma'$, and proved the claim. 

Then apply maximum principle on the fiber $X\times\{ t \}$ in a similar manner, and we have  
$$ |\phi_{t_0,\eta}(x_0) - \phi_{t_1,\eta}(x_1)| \leq \ep^{-1} C (|t_0 - t_1| +  d(x_0, x_1)^{\alpha'}), $$
where $C$ is a uniform constant independent of $\eta$, and $d(x_0, x_1)$ is the distance between the two points measured by the model cone metric.
\end{proof}

\subsection{$C^2$ estimate}
Guenancia and P\u aun generalized a prior $C^2$ estimate for K\"ahler-Einstein equation to conic case in their work \cite{GP14}. Here we
will invoke this method again.
\begin{prop}[Guenancia, P\u aun]
\label{pr-C2}
Let $\omega_{\varphi} = \omega + dd^c\varphi$ be a K\"ahler metric satisfying 
\begin{equation}\label{544}
(\omega + dd^c\varphi)^n = e^{\psi_+ - \psi_{-}}\omega^n,
\end{equation}
for some smooth function $\psi_{\pm}$. Suppose there exists a uniform constant $C>0$, and a smooth function $\Psi$ such that 
\begin{equation}
\label{5455}
C\omega+dd^c\Psi \geq 0,
\end{equation}
and
\begin{equation}
\label{545}
i\Theta_{\omega}(T_X) \geq - (C\omega + dd^c\Psi)\otimes \emph{Id}.
\end{equation}
Then we have 
\begin{equation}
\label{lap}
\Delta_{\omega_{\varphi}} (\log tr_{\omega}\omega_{\varphi} + \Psi - A\varphi) \geq 
\frac{ \Delta_{\omega}f }{tr_{\omega}\omega_{\varphi} }  - C tr_{\omega_{\varphi}}\omega -nA + A( tr_{\omega_{\varphi}}\omega),
\end{equation}
where $f = \psi_+ - \psi_{-}$.
\end{prop}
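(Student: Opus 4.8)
The plan is to treat this as the classical Aubin--Yau (Chern--Lu) second order estimate for the complex Monge--Amp\`ere equation, modified so that the negative part of the curvature of $T_X$ is allowed to be a $dd^c$ term rather than a constant. The starting point is the pointwise inequality for $\Delta_{\omega_{\varphi}}\log tr_{\omega}\omega_{\varphi}$. Fix a point and choose normal coordinates for $\omega$ there in which $\omega_{\varphi}$ is simultaneously diagonal, with eigenvalues $\mu_1,\dots,\mu_n$ relative to $\omega$; thus $tr_{\omega}\omega_{\varphi}=\sum_i\mu_i$ and $tr_{\omega_{\varphi}}\omega=\sum_i\mu_i^{-1}$. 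Writing $u=tr_{\omega}\omega_{\varphi}$ and using $\Delta_{\omega_{\varphi}}\log u=\frac{\Delta_{\omega_{\varphi}}u}{u}-\frac{|\partial u|^2_{\omega_{\varphi}}}{u^2}$, I would expand $\Delta_{\omega_{\varphi}}u$ via the K\"ahler commutation identities, trading the fourth derivatives of $\varphi$ for $\Delta_\omega$ of the volume ratio by differentiating twice the identity $\log\det(g_\varphi)=f+\log\det(g)$ coming from (\ref{544}), where $f=\psi_+-\psi_-$. The third order (gradient) terms produced this way combine, by a Cauchy--Schwarz inequality, into a nonnegative quantity dominating $|\partial u|^2_{\omega_\varphi}/u$, so they can be discarded with the correct sign. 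This yields the standard inequality
\[
\Delta_{\omega_{\varphi}}\log tr_{\omega}\omega_{\varphi}\ \geq\ \frac{\Delta_\omega f}{tr_{\omega}\omega_{\varphi}}+\frac{1}{tr_{\omega}\omega_{\varphi}}\sum_{i,k}\frac{\mu_k}{\mu_i}\,R_{i\bar i k\bar k},
\]
where $R_{i\bar i k\bar k}$ denote the bisectional curvature components of $\omega$ in the chosen frame.

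The next step is to insert the curvature hypothesis. In this frame the assumption $i\Theta_\omega(T_X)\geq-(C\omega+dd^c\Psi)\otimes\mathrm{Id}$ of (\ref{545}) reads $R_{i\bar i k\bar k}\geq-(C+\Psi_{i\bar i})$ for all $i,k$, where $\Psi_{i\bar i}$ are the coefficients of $dd^c\Psi$. Using $\sum_k \mu_k/\mu_i=(tr_{\omega}\omega_{\varphi})/\mu_i$, the curvature sum is bounded below by
\[
\frac{1}{tr_{\omega}\omega_{\varphi}}\sum_{i,k}\frac{\mu_k}{\mu_i}\,R_{i\bar i k\bar k}\ \geq\ -\sum_i\mu_i^{-1}\bigl(C+\Psi_{i\bar i}\bigr)\ =\ -C\,tr_{\omega_{\varphi}}\omega-\Delta_{\omega_{\varphi}}\Psi,
\]
since $\sum_i\mu_i^{-1}=tr_{\omega_{\varphi}}\omega$ and $\sum_i\mu_i^{-1}\Psi_{i\bar i}=\Delta_{\omega_{\varphi}}\Psi$ (as $g_\varphi^{i\bar i}=\mu_i^{-1}$ in this frame). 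The role of $\Psi$ is exactly this: it absorbs the part of the negative curvature that is not a constant multiple of $\omega$, at the cost of the extra term $-\Delta_{\omega_{\varphi}}\Psi$.

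Combining, $\Delta_{\omega_{\varphi}}\log tr_\omega\omega_\varphi+\Delta_{\omega_{\varphi}}\Psi\geq \frac{\Delta_\omega f}{tr_\omega\omega_\varphi}-C\,tr_{\omega_\varphi}\omega$, so inserting $\Psi$ into the test function cancels the unwanted $-\Delta_{\omega_\varphi}\Psi$. Finally I would compute the contribution of $-A\varphi$ directly from $dd^c\varphi=\omega_\varphi-\omega$, namely $\Delta_{\omega_\varphi}(-A\varphi)=-A\,tr_{\omega_\varphi}(\omega_\varphi-\omega)=-nA+A\,tr_{\omega_\varphi}\omega$. Summing the three contributions gives exactly the claimed inequality (\ref{lap}). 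The only genuinely delicate point is the derivation of the base inequality in the first paragraph: keeping track of the curvature of $\omega$ when commuting covariant derivatives and verifying that the third order terms have the favorable sign; everything after that is bookkeeping in the simultaneously diagonalizing frame. I also note that hypothesis (\ref{5455}), $C\omega+dd^c\Psi\geq0$, is not needed for the inequality itself, but guarantees that $\Psi$ is a legitimate quasi-$psh$ weight, which is what will make this estimate usable in the maximum principle argument to follow.
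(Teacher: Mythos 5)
Your proposal has to be measured against Guenancia--P\u aun \cite{GP14}, because the paper under review does not prove Proposition (\ref{pr-C2}) at all: it quotes the statement and only remarks that the computation is local. Measured against that, there is a genuine gap, and it sits exactly at the point where this proposition differs from the classical Aubin--Yau estimate. Your base inequality is false as stated. The correct identity, in your frame, is
\begin{equation*}
\Delta_{\omega_{\varphi}}\,tr_{\omega}\omega_{\varphi}\;=\;\Delta_{\omega}f\;+\;\frac{1}{2}\sum_{i,k}\frac{(\mu_i-\mu_k)^2}{\mu_i\mu_k}\,R_{i\bar{i}k\bar{k}}\;+\;T,
\end{equation*}
where $T$ is the third-order term dominating $|\partial\, tr_{\omega}\omega_{\varphi}|^2_{\omega_{\varphi}}/tr_{\omega}\omega_{\varphi}$. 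Your weight $\mu_k/\mu_i$ differs from the correct symmetrized weight $(\mu_i-\mu_k)^2/(2\mu_i\mu_k)$ exactly by the scalar curvature $s$ of $\omega$, since $\frac{1}{2}\sum_{i,k}\bigl(\frac{\mu_i}{\mu_k}+\frac{\mu_k}{\mu_i}-2\bigr)R_{i\bar{i}k\bar{k}}=\sum_{i,k}\frac{\mu_k}{\mu_i}R_{i\bar{i}k\bar{k}}-s$. Consequently your claimed inequality asserts, for $\varphi\equiv0$ and $\psi_{\pm}\equiv0$, that $0\geq s/n$, which fails on any manifold of positive scalar curvature. This is not a removable slip here: in this paper the proposition is applied with $\omega=\omega_{\ep'}$, whose curvature blows up as $\ep'\rightarrow0$, so the missing scalar-curvature term cannot be absorbed into the constant $C$.

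The misstatement matters downstream. With the correct weight, your one-line absorption of the curvature term no longer works: applying only the bound $R_{i\bar{i}k\bar{k}}\geq-(C+\Psi_{i\bar{i}})$ to the symmetrized sum produces, besides the harmless $-u\,(C\,tr_{\omega_{\varphi}}\omega+\Delta_{\omega_{\varphi}}\Psi)$ where $u=tr_{\omega}\omega_{\varphi}$, an uncontrollable term $-tr_{\omega_{\varphi}}\omega\cdot\sum_i\mu_i(C+\Psi_{i\bar{i}})$, and no upper bound on $dd^c\Psi$ is available. The actual argument uses both bounds $R_{i\bar{i}k\bar{k}}\geq-(C+\Psi_{i\bar{i}})$ and $R_{i\bar{i}k\bar{k}}\geq-(C+\Psi_{k\bar{k}})$ (K\"ahler symmetry), i.e. $R_{i\bar{i}k\bar{k}}\geq-\min(a_i,a_k)$ with $a_j:=C+\Psi_{j\bar{j}}$, together with the elementary inequality
\begin{equation*}
\frac{(\mu_i-\mu_k)^2}{\mu_i\mu_k}\,\min(a_i,a_k)\;\leq\;\Bigl(\frac{\mu_i}{\mu_k}+\frac{\mu_k}{\mu_i}\Bigr)\min(a_i,a_k)\;\leq\;\frac{\mu_i}{\mu_k}\,a_k+\frac{\mu_k}{\mu_i}\,a_i,
\end{equation*}
both steps of which use precisely $a_j\geq0$; summing over $i,k$ gives $\frac{1}{2}\sum_{i,k}\frac{(\mu_i-\mu_k)^2}{\mu_i\mu_k}R_{i\bar{i}k\bar{k}}\geq-u\,(C\,tr_{\omega_{\varphi}}\omega+\Delta_{\omega_{\varphi}}\Psi)$, which yields (\ref{lap}). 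Since $a_j\geq0$ is hypothesis (\ref{5455}), your closing claim that (\ref{5455}) is not needed is also wrong; in fact the statement fails without it. At $\varphi\equiv0$, (\ref{lap}) reduces to $\Delta_{\omega}\Psi\geq-Cn$; on $\mathbb{CP}^1$ with the Fubini--Study metric, normalized so that $i\Theta_{\omega}(T_X)=2\,\omega\otimes\mathrm{Id}$ and the height function $h$ satisfies $dd^ch=-2h\omega$, the choice $\Psi=th$ with $C/2<t\leq(C+2)/2$ satisfies (\ref{545}) with the constant $C$, violates (\ref{5455}), and violates (\ref{lap}) at the maximum of $h$. What is correct in your proposal are the standard parts: the Cauchy--Schwarz treatment of the third-order terms and the bookkeeping $\Delta_{\omega_{\varphi}}(-A\varphi)=-nA+A\,tr_{\omega_{\varphi}}\omega$.
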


Although equation (\ref{lap}) is in a global form, it is in fact computed in a local normal coordinate near a point $p\in X$. So proposition (\ref{pr-C2}) applies even if the Monge-Amp\`ere equation (\ref{544}) holds locally. 
In order to apply this inequality, we prefer to write our family of Monge-Amp\'ere equations as 
$$(\omega_{\ep'} + dd^c\varphi_{\ep'})^n = e^{\mu \varphi_{\ep'} + F_{\ep'}} \omega_{\ep'}^n,  $$
with 
$$ F_{\ep'}: = -\log\frac{(|s|^2 + \ep')^{1-\beta} \omega^n_{\ep'}}{dV}. $$
Here $\omega_{\ep'} = \omega + dd^c \varphi_{\ep'}$ is constructed in the paper of Guenancia and P\u aun \cite{GP14}.
It is equivalent to our $\omega_{\beta,\ep'}$, 
and will approximate the model conic K\"ahler metric $\omega_{\beta}$ as $\ep'\rightarrow 0$. 
However, we have some extra term in the Monge-Amp\`ere operator in equation $(\dagger_{\eta})$ like $\ep\Theta_{\omega}(K_X)$ and so on. 
Hence our goal is prove these perturbations won't really affect the estimate. First, put 
$$ \tau'_{\eta}: = \ep\phi_{\eta} + \varphi_{\delta}  - \varphi_{\ep'}  $$
and define locally
$$ \zeta_{\eta}: = \tau'_{\eta} + \ep\psi_{\theta}, $$
where $\psi_{\theta}$ is the local potential of the $(1,1)$ form $\Theta_{\omega}(K_X)$. Then we can
re-write equation (\ref{MA}) locally as 
\begin{equation}
\label{MA6}
(\omega_{\ep'} + dd^c \zeta_{\eta})^n =  
\exp \left( \frac{1}{\ep} (\zeta_{\eta} - \varphi_{\delta}  + \varphi_{\ep'} - \ep\psi_{\theta} ) + F_{\ep'}\right) \omega_{\ep'}^n.
\end{equation}
Here we forgot about the positive $(1,1)$ form $2C\delta(\omega + \sqrt{-1}dt\wedge d\bar{t})|_{X\times\{ t\}}$, since the metric 
$\omega_{\ep'} + 2C\delta(\omega + \sqrt{-1}dt\wedge d\bar{t})$ is a small smooth perturbation(only depends on the index $\delta$!) of the non-degenerate metric $\omega_{\ep'}$. In fact, the growth of the curvature of the perturbed metric is completely determined by $\omega_{\ep'}$ itself near the divisor.

Take $\omega_{\zeta_{\eta}} = \omega_{\ep'} + dd^c \zeta_{\eta}$ and 
$f = \frac{1}{\ep}u_{\eta} + F_{\ep'}$, where 
$$u_{\eta}: = \zeta_{\eta} - \varphi_{\delta}  + \varphi_{\ep'} = \ep (\phi_{\eta} + \psi_{\theta}).  $$

Although $u_{\eta}$ is not globally defined, $\Delta_{\omega}u_{\eta}$ is globally well define with respect to any K\"ahler form $\omega$.
Hence $\Delta_{\omega}f$ is also globally defined. 
Applying proposition (\ref{pr-C2}) to equation (\ref{MA6}) under the curvature assumption (\ref{5455}) and (\ref{545}), we get
\begin{eqnarray}
\label{566}
\Delta_{\omega_{\zeta}} (\log tr_{\omega_{\ep'}}\omega_{\zeta}  + \Psi_{\ep'}  -A\tau'_{\eta}) &\geq& 
A\ep \Delta_{\omega_{\zeta}} \psi_{\theta} -  \frac{\Delta_{\omega_{\ep'}} \psi_{\theta}}{tr_{\omega_{\ep'}} \omega_{\zeta}}      + \frac{\Delta_{\omega_{\ep'}} f}{tr_{\omega_{\ep'}} \omega_{\zeta}}
\nonumber\\
& +& (A - C) tr_{\omega_{\zeta}}\omega_{\ep'} - nA
\nonumber\\
&\geq & (\ep A -1)\frac{\Delta_{\omega_{\ep'}} \psi_{\theta}}{tr_{\omega_{\ep'}} \omega_{\zeta}} + \frac{\Delta_{\omega_{\ep'}} f}{tr_{\omega_{\ep'}} \omega_{\zeta}}
+(A - C) tr_{\omega_{\zeta}}\omega_{\ep'} - nA
\nonumber\\
&\geq& \frac{\Delta_{\omega_{\ep'}} f}{tr_{\omega_{\ep'}} \omega_{\zeta}} + (A -C -C'|\ep A -1|) tr_{\omega_{\zeta}}\omega_{\ep'} - nA,
\end{eqnarray}
for $\ep A < 1$, and $C$ and $C'$ are some uniform constants. For the second inequality in equation (\ref{566}), we assumed $\Theta = dd^c\psi_{\theta} \geq 0$,
since on the point where $dd^c\psi_{\theta} <0$, we have an even better lower bound.

Now the two sides of inequality (\ref{566}) become globally defined, and we can apply maximum principle to it as in \cite{GP14}.

\begin{lemma}
\label{lem-C2}
Let $\phi_{t,\eta}$ be the solution of equation $(\dagger_{\eta})$. Then there exits a proper choice of $\ep' = \ep'( \delta)$, 
where $\ep'\rightarrow 0$ if $\delta\rightarrow 0$, such that we have:
$$ -C\omega_{\ep'}\leq \ep dd^c\phi_{\ep,\delta,\ep'} \leq C\omega_{\ep'}$$ for a uniform constant $C$ independent of $\eta$.
\end{lemma}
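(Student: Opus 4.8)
The plan is to run an Aubin--Yau type second--order maximum principle on the globally defined quantity already produced in inequality (\ref{566}), while keeping the three regularizations $\ep,\delta,\ep'$ linked (in particular $\ep'=\delta^2$ as in Lemma (\ref{lem-C0})) so that every singular contribution stays comparable to $\omega_{\ep'}$ near the divisor. First I would consider, on each fiber $X\times\{t\}$, the function
\[
H:=\log \tr_{\omega_{\ep'}}\omega_{\zeta}+\Psi_{\ep'}-A\tau'_{\eta},
\]
where $A$ is a large constant chosen with $\ep A<1$. Since both sides of (\ref{566}) are globally defined, I may evaluate at a point $x_0$ where $H$ attains its fiberwise maximum; there $\Delta_{\omega_{\zeta}}H(x_0)\le 0$, and (\ref{566}) gives at $x_0$
\[
0\ge \frac{\Delta_{\omega_{\ep'}}f}{\tr_{\omega_{\ep'}}\omega_{\zeta}}+\bigl(A-C-C'|\ep A-1|\bigr)\tr_{\omega_{\zeta}}\omega_{\ep'}-nA .
\]
Fixing $A$ large makes the coefficient of $\tr_{\omega_{\zeta}}\omega_{\ep'}$ positive and bounded away from zero, so this controls $\tr_{\omega_{\zeta}}\omega_{\ep'}(x_0)$ provided the term $\Delta_{\omega_{\ep'}}f$ is bounded below.

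For $\Delta_{\omega_{\ep'}}f$ I would split $f=\tfrac1\ep u_{\eta}+F_{\ep'}$. The part $\Delta_{\omega_{\ep'}}u_{\eta}$ is expressed through the trace of the Monge--Amp\`ere equation (\ref{MA6}) together with $\Delta_{\omega_{\ep'}}\varphi_{\delta}$, which is comparable to $\omega_{\ep'}$ by Lemma (\ref{lem-grow}) once $\ep'=\delta^2$, while the smooth piece $\ep\Delta_{\omega_{\ep'}}\psi_{\theta}$ is of order $\ep$. The genuinely delicate term is $\Delta_{\omega_{\ep'}}F_{\ep'}$, whose negative part comes from $-(1-\beta)\Delta_{\omega_{\ep'}}\log(|s|^2+\ep')$ and is \emph{not} controlled by a naive curvature lower bound when $\beta>1/2$. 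Here I would invoke the Guenancia--P\u aun mechanism already encoded in Proposition (\ref{pr-C2}): the auxiliary function $\Psi_{\ep'}$ is chosen so that the twisted curvature hypotheses (\ref{5455}) and (\ref{545}) hold for $\omega_{\ep'}$, and the positivity $dd^c\log(|s|^2+\ep' e^{\psi})\ge 0$ lets the bad term be absorbed into $\Psi_{\ep'}$ rather than into a negative multiple of $\tr_{\omega_{\zeta}}\omega_{\ep'}$.

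With $\Delta_{\omega_{\ep'}}f$ bounded below, the inequality above bounds $\tr_{\omega_{\zeta}}\omega_{\ep'}(x_0)$; feeding this into the elementary inequality
\[
\tr_{\omega_{\ep'}}\omega_{\zeta}\le (\tr_{\omega_{\zeta}}\omega_{\ep'})^{n-1}\,\frac{\omega_{\zeta}^n}{\omega_{\ep'}^n},
\]
whose volume ratio is uniformly bounded above by the $C^0$ estimate of Lemma (\ref{lem-C0}), bounds $\tr_{\omega_{\ep'}}\omega_{\zeta}(x_0)$, hence $H(x_0)$. Since $\Psi_{\ep'}$ and $\tau'_{\eta}$ are uniformly bounded, $H\le C$ on the whole fiber, giving the key upper bound $\omega_{\zeta}\le C\omega_{\ep'}$. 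Finally I would translate back through $\ep\,dd^c\phi_{\eta}=\omega_{\zeta}-\omega-dd^c\varphi_{\delta}-\ep\,dd^c\psi_{\theta}$: the positivity $\omega_{\zeta}\ge 0$ together with the two--sided comparability of $\omega$, of $dd^c\varphi_{\delta}$ (via Lemma (\ref{lem-grow}) with $\ep'=\delta^2$), and of $\ep\,dd^c\psi_{\theta}$ to $\omega_{\ep'}$ yields the lower bound, while the upper bound $\omega_{\zeta}\le C\omega_{\ep'}$ yields the upper bound, giving $-C\omega_{\ep'}\le \ep\,dd^c\phi_{\ep,\delta,\ep'}\le C\omega_{\ep'}$.

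The main obstacle is precisely $\Delta_{\omega_{\ep'}}F_{\ep'}$ coupled with the requirement that the three regularizations stay mutually comparable: for $\beta>1/2$ there is no lower bound on the curvature of $\omega_{\ep'}$, so the argument hinges on the choice $\ep'=\delta^2$ and on the Guenancia--P\u aun absorbing trick, ensuring that the singular Hessians of $\varphi_{\delta}$, of the reference metric $\omega_{\ep'}$, and of $F_{\ep'}$ all remain of the same order $(|s|^2+\delta^2)^{-(1-\beta)}$ uniformly in $\eta$ and in $t\in\Sigma'$.
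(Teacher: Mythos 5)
Your overall architecture (maximum principle applied to the twisted quantity from (\ref{566}), the Guenancia--P\u aun auxiliary function $\Psi_{\ep'}$, and the coupling $\ep'=\ep'(\delta)$ through Lemma (\ref{lem-grow})) matches the paper's, but the pivotal step fails. You require ``$\Delta_{\omega_{\ep'}}f$ bounded below'' and then claim the maximum-principle inequality controls $tr_{\omega_{\zeta}}\omega_{\ep'}(x_0)$. That premise cannot hold uniformly in $\eta$: since $f=\frac{1}{\ep}u_{\eta}+F_{\ep'}$ and $dd^c u_{\eta}=\omega_{\zeta}-\mathcal{G}'_{\delta}$, the best pointwise bound is
\[
\Delta_{\omega_{\ep'}}f\;\geq\;\frac{1}{\ep}\Bigl( tr_{\omega_{\ep'}}\omega_{\zeta}-tr_{\omega_{\ep'}}\mathcal{G}'_{\delta}\Bigr)+\Delta_{\omega_{\ep'}}F_{\ep'}\;\geq\;-\frac{C}{\ep}+\Delta_{\omega_{\ep'}}F_{\ep'},
\]
because $tr_{\omega_{\ep'}}\mathcal{G}'_{\delta}$ is of order $1$ near the divisor (both metrics are conic of the same angle), so the $1/\ep$ loss cannot be removed by any choice of $\ep'(\delta)$. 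Feeding a bound of size $-C/\ep$ into your inequality gives $\lambda\, tr_{\omega_{\zeta}}\omega_{\ep'}(x_0)\leq nA+\frac{C/\ep}{tr_{\omega_{\ep'}}\omega_{\zeta}(x_0)}$, which is useless unless $tr_{\omega_{\ep'}}\omega_{\zeta}(x_0)$ is a priori of order $1/\ep$; the only available lower bound, $tr_{\omega_{\ep'}}\omega_{\zeta}\geq n e^{f/n}$, degenerates like $e^{-C/(n\ep)}$ since Lemma (\ref{lem-C0}) controls $\inf\phi_{t,\eta}$ only at the scale $1/\ep$. So neither the bound on the reverse trace nor the subsequent volume-ratio conversion ever gets started.

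What closes the argument --- and is exactly what the paper does after (\ref{599}) --- is to \emph{not} seek a lower bound for $\Delta_{\omega_{\ep'}}f$ at all, but to keep its structure after division:
\[
\frac{\Delta_{\omega_{\ep'}}\bigl(\frac{1}{\ep}u_{\eta}\bigr)}{tr_{\omega_{\ep'}}\omega_{\zeta}}
=\frac{1}{\ep}\left(1-\frac{tr_{\omega_{\ep'}}\mathcal{G}'_{\delta}}{tr_{\omega_{\ep'}}\omega_{\zeta}}\right),
\]
where the positive $\frac{1}{\ep}$ now works in your favor. At the maximum point $p$, discarding the nonnegative term $(A-2C-C')\,tr_{\omega_{\zeta}}\omega_{\ep'}$ yields $nA\geq \frac{1}{\ep}\bigl(1-\frac{tr_{\omega_{\ep'}}\mathcal{G}'_{\delta}(p)}{tr_{\omega_{\ep'}}\omega_{\zeta}(p)}\bigr)$, i.e. $(1-\ep nA)\,tr_{\omega_{\ep'}}\omega_{\zeta}(p)\leq tr_{\omega_{\ep'}}\mathcal{G}'_{\delta}(p)\leq C$ once $\ep'(\delta)$ is chosen by Lemma (\ref{lem-grow}) so that $\mathcal{G}'_{\delta}\leq C\omega_{\ep'(\delta)}$. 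This bounds the direct trace $tr_{\omega_{\ep'}}\omega_{\zeta}$ at $p$ with no reverse trace and no volume-ratio step. A second, smaller defect: your test function carries a single $\Psi_{\ep'}$, but the lower bound (\ref{588}) for $\Delta_{\omega_{\ep'}}F_{\ep'}$ costs a term $-\Delta_{\omega_{\ep'}}\Psi_{\ep'}/tr_{\omega_{\ep'}}\omega_{\zeta}$ whose positive singular part is unbounded; the paper absorbs it by taking $2\Psi_{\ep'}$ in the test function (equation (\ref{599})), and with only one copy this term is left uncontrolled.
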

\begin{proof}
According to section 4 of \cite{GP14}, a very useful auxiliary function $\Psi$ is constructed as 
$$\Psi_{\ep', \rho}: = C \chi_{\rho} (|s|^2_{\psi} + \ep'), $$ where $\chi_{\rho}$ is a kind of cut-off function defined on $[\ep' , +\infty [$ to $\mathbb{R}$ such that
$$i\ddbar \Psi_{\ep', \rho}  \geq C (|s|^2_{\psi} + \ep')^{\rho-1} \sqrt{-1} \langle \partial^{\psi}s, \partial^{\psi}s \rangle - C\omega_{\ep'}. $$
In particular, this implies 
$$ C\omega_{\ep'} + dd^c\Psi_{\ep'} \geq 0. $$

As also proved in \cite{GP14}, for any fixed $0<\rho<\min \{ \beta, 1-\beta \}$, the function $\Psi_{\ep'} = \Psi_{\ep',\rho}$ is uniformly bounded,
and we have the following two important inequalities:
\begin{equation}
\label{577}
i\Theta_{\omega_{\ep'}}(T_X) \geq - (C\omega_{\ep'} + dd^c\Psi_{\ep'})\otimes \text{Id};
\end{equation}
and
\begin{equation}
\label{588}
dd^c F_{\ep'} \geq - (C\omega_{\ep'} + dd^c \Psi_{\ep'})
\end{equation}
where $C$ is a constant independent of $\ep'$. 

Now equation (\ref{577}) allows us to use proposition (\ref{pr-C2}), and we infer from equation (\ref{588}) as 
$$ \Delta_{\omega_{\ep'}} F_{\ep'} \geq -nC - \Delta_{\omega_{\ep'}}\Psi_{\ep'},$$
and then by equation (\ref{566}) we have 
$$ \Delta_{\omega_{\zeta}}\Psi_{\ep'} \geq  - \frac{\Delta_{\omega_{\ep'}}F_{\ep'}}{tr_{\omega_{\ep'}}\omega_{\zeta}}  - C{tr_{\omega_{\zeta}}\omega_{\ep'}} .$$
Hence we can re-write equation (\ref{5455}) as 
\begin{eqnarray}
\label{599}
\Delta_{\omega_{\zeta}} (\log tr_{\omega_{\ep'}}\omega_{\zeta}  + 2\Psi_{\ep'}  -A\tau'_{\eta}) 
&\geq& \frac{\Delta_{\omega_{\ep'}} ( \frac{1}{\ep}u_{\eta} ) }{tr_{\omega_{\ep'}} \omega_{\zeta}} + (A -2C -C'|\ep A -1|) tr_{\omega_{\zeta}}\omega_{\ep'} - nA
\nonumber\\
&\geq& \frac{\Delta_{\omega_{\ep'}} ( \frac{1}{\ep}u_{\eta} ) }{tr_{\omega_{\ep'}} \omega_{\zeta}} + (A - 2C - C')tr_{\omega_{\zeta}}\omega_{\ep'} - nA
\nonumber\\
&\geq& \frac{\Delta_{\omega_{\ep'}} ( \frac{1}{\ep}u_{\eta} ) }{tr_{\omega_{\ep'}} \omega_{\zeta}} - nA.
\end{eqnarray}
The second line works for $\ep$ small enough such that $\ep (2C+C'+1) <1$, under the assumption $A < 2C + C'+1$. And in the third line, we just take $A = 2C+ C'$.
Observe that $dd^c u_{\eta} = \omega_{\zeta} - \mathcal{G}'_{\delta}$, 
and then at the maximum point $p$, we have 
$$ (1-\ep nA) ( tr_{\omega_{\ep'}}\omega_{\zeta}) (p) \leq tr_{\omega_{\ep'}} \mathcal{G}'_{\delta}(p)  $$
Finally, as we did in the proof of the upper bound of $C^0$ estimate, take $\ep'(\delta)$ decreasing fast enough such that 
$$ \mathcal{G}'_{\delta} \leq C\omega_{\ep'(\delta)}, $$ 
for some uniform constant $C$, when $\delta\rightarrow 0$. This is possible thanks to lemma (\ref{lem-grow}). 
Then for $\ep$ small enough, we have 
$$  tr_{\omega_{\ep'}}\omega_{\zeta} (p) \leq \frac{C}{1-\ep nA},$$
and the $C^2$ estimate follows.
\end{proof}

\subsection{Approximation of the conic geodesic}
Based on our previous $C^0$ and $C^2$ estimate (lemma (\ref{lem-C0}) and (\ref{lem-C2})), we conclude as follows.
\begin{lemma}
\label{cor-geod}
For each fixed $\ep>0$, we can extract a limit 
$$\lim_{\delta\rightarrow 0} \phi_{\ep, \delta, \ep'(\delta)}  = \phi_{\ep},$$
strongly in $C^0$. And the restriction of $\phi_{\ep}$ to the fiber $X\times\{ t\}$ is the unique conic $\mathcal{C}^{1,\bar{1}}$ solution of the degenerate Monge-Amp\`ere
equation
\begin{equation}
\label{epsilon}
(\Theta_{\omega}(K_X + D)  + \ep^{-1}\mathcal{G} + dd^c\phi_{t,\ep})^n = \ep^{-n} e^{\phi_{t,\ep}}\frac{\omega^n}{|s|^{2-2\beta}_{\psi}}.
\end{equation}
Moreover we have 
$$ \Theta_{\omega}(K_X + D) + \ep^{-1}\mathcal{G} + dd^c\phi_{\ep} \geq 0, $$
as a current in the product manifold $X\times\Sigma'$.
\end{lemma}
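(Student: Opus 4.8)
The plan is to combine the three a priori bounds established above---the $C^0$ estimate (Lemma~\ref{lem-C0}), the equicontinuity (Lemma~\ref{lem-equi}), and the relative $C^2$ estimate (Lemma~\ref{lem-C2})---with the fibre-wise positivity recorded in \eqref{positive}, and to let $\delta\to 0$ along the coupling $\ep'=\ep'(\delta)$. First I fix $\ep>0$. By Lemma~\ref{lem-C0} the functions $\phi_{\ep,\delta,\ep'(\delta)}$ are uniformly bounded on $X\times\Sigma'$: the upper bound is uniform, and the lower bound $\inf_X\phi_{t,\eta}\geq -C/\ep$ is uniform once $\ep$ is fixed. Lemma~\ref{lem-equi} supplies a uniform modulus of continuity in both the fibre and the time directions, so by Arzel\`a--Ascoli a subsequence converges strongly in $C^0$ to some $\phi_{\ep}$; convergence of the whole family will follow a posteriori from the uniqueness of the limiting equation.

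Next I identify the limit equation. As $\delta\to 0$ the approximating data degenerate in a controlled way: $\varphi_{\delta}\to\varphi$ uniformly by Lemma~\ref{lem-app-geod}(iv), so $\Delta_{\eta}=\mathcal{G}'_{\delta}/\ep\to\ep^{-1}\mathcal{G}$ weakly, while the regularized weight $\omega^n/(|s|^2_{\psi}+\ep')^{1-\beta}$ increases monotonically to the integrable singular weight $\omega^n/|s|^{2-2\beta}_{\psi}$. The combined potential $\ep^{-1}\varphi_{\delta}+\phi_{\ep,\delta,\ep'}$ of the left-hand form then converges uniformly to the bounded function $\ep^{-1}\varphi+\phi_{\ep}$, and although the second-order data $\mathcal{G}'_{\delta}$ blow up near $D$ by Lemma~\ref{lem-grow}, the continuity of the complex Monge--Amp\`ere operator under uniform convergence of uniformly bounded potentials (Bedford--Taylor, Walsh) needs only this $C^0$ convergence; it forces $(\Theta_{\omega}(K_X+D)+\Delta_{\eta}+dd^c\phi_{t,\eta})^n$ to converge weakly to $(\Theta_{\omega}(K_X+D)+\ep^{-1}\mathcal{G}+dd^c\phi_{t,\ep})^n$. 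The right-hand side converges by dominated convergence using the uniform bound on $e^{\phi_{t,\eta}}$, which yields \eqref{epsilon} as an identity of measures; since the limiting potential is bounded its Monge--Amp\`ere measure charges no mass on the pluripolar set $D$, matching the absolutely continuous right-hand side.

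The regularity $\phi_{\ep}|_{X\times\{t\}}\in\mathcal{C}^{1,\bar{1}}$ comes from passing the relative bound of Lemma~\ref{lem-C2}, namely $-C\omega_{\ep'}\leq\ep\,dd^c\phi_{\ep,\delta,\ep'}\leq C\omega_{\ep'}$, to the limit: with $\ep'=\ep'(\delta)\to 0$ one has $\omega_{\ep'}\to\omega_{\beta}$, so $\ep\,dd^c\phi_{\ep}$ has bounded conic Laplacian. Uniqueness among $\mathcal{C}^{1,\bar{1}}$ solutions then follows from the comparison principle for bounded quasi-psh functions applied to the monotone factor $e^{\phi_{t,\ep}}$: if $\phi,\tilde\phi$ both solve \eqref{epsilon}, evaluating the comparison inequality at an interior maximum of $\phi-\tilde\phi$ forces $\phi\leq\tilde\phi$, and symmetrically the reverse. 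Finally, for the global positivity I use that each approximant satisfies $\Theta_{\omega}(K_X+D)+\Delta_{\eta}+dd^c\phi_{t,\eta}>0$ on the whole product $X\times\Sigma'$ by \eqref{positive}; since $\Delta_{\eta}\to\ep^{-1}\mathcal{G}$ and $\phi_{\ep,\delta,\ep'(\delta)}\to\phi_{\ep}$ in $C^0$, the current $\Theta_{\omega}(K_X+D)+\ep^{-1}\mathcal{G}+dd^c\phi_{\ep}$ is a weak limit of closed positive $(1,1)$-currents, hence closed and positive.

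I expect the main obstacle to be keeping the two degenerations $\delta\to 0$ and $\ep'\to 0$ compatible. The $C^2$ bound of Lemma~\ref{lem-C2} is only uniform \emph{relative} to the background $\omega_{\ep'}$, which itself degenerates to the singular metric $\omega_{\beta}$, so one must use the precise coupling $\ep'=\ep'(\delta)$ and the growth estimates of Lemma~\ref{lem-grow} to guarantee that this relative Laplacian bound passes cleanly to the conic bound in the limit and that the weak limit of the left-hand Monge--Amp\`ere measures is genuinely the Monge--Amp\`ere measure of the limiting bounded potential. Extracting exactly $\mathcal{C}^{1,\bar{1}}$ regularity from the relative estimate---enough to run the comparison argument for uniqueness---is the delicate point, and is precisely where the balancing of the three approximations is used.
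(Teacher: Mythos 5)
Your proposal is correct and follows essentially the same route as the paper: the paper's (very terse) proof likewise takes the $C^0$ limit granted by the a priori estimates and equicontinuity of Lemma (\ref{lem-equi}), and passes the global positivity (\ref{positive}) to the limit current, with the identification of the limit equation, the $\mathcal{C}^{1,\bar{1}}$ regularity from Lemma (\ref{lem-C2}), and the uniqueness left implicit exactly along the lines you spell out (Bedford--Taylor convergence under uniform convergence of bounded potentials, and the comparison principle). Your write-up simply makes explicit the details the paper defers to its predecessor \cite{CLP}.
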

\begin{proof}
The last statement is because of the equi-continuity proved in lemma (\ref{lem-equi}). In fact, the limit 
$\phi_{\ep}: = \lim_{\delta\rightarrow 0} \phi_{\ep, \delta, \ep'(\delta)}$ is continuous in the interior of $X\times\Sigma$, 
and the global positivity from equation (\ref{positive}) passes down to 
$$  \Theta_{\omega}(K_X + D) + \ep^{-1}\mathcal{G} + dd^c\phi_{\ep} \geq 0.$$
\end{proof}
Next, we need a lemma to investigate the behavior of the solution $\phi_{\ep}$ when $\ep\rightarrow 0$. 
The method is used in theorem (4.3) of \cite{CLP} before, and we recall it here.
\begin{lemma}
\label{lem-final}
The sequence of measures 
$$ e^{\phi_{t,\ep}}\omega^n / |s|^{2-2\beta}_{\psi}  $$ converges to $\mathcal{G}^n|_{X\times\{t \}}$ weakly in $L^p$, for any $1< p < \frac{1}{1-\beta} $.
\end{lemma}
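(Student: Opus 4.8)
The plan is to rewrite the degenerate equation (\ref{epsilon}) in its pre-limit, non-degenerate form and then extract the weak limit of the associated Monge--Amp\`ere measures. Multiplying (\ref{epsilon}) by $\ep^n$ and setting
\[ \omega_{t,\ep} := \ep\,\Theta_{\omega}(K_X+D) + \mathcal{G}|_{X\times\{t\}} + \ep\, dd^c\phi_{t,\ep} \geq 0, \]
the fibre equation becomes $\omega_{t,\ep}^n = e^{\phi_{t,\ep}}\, d\mu_{\beta}$ with $d\mu_{\beta} := \omega^n/|s|^{2-2\beta}_{\psi}$. Since $\omega_{t,\ep} - \omega_{\varphi_t} = \ep\,\Theta_{\omega}(K_X+D) + dd^c(\ep\phi_{t,\ep})$, where $\omega_{\varphi_t} = \mathcal{G}|_{X\times\{t\}}$, the whole problem reduces to showing that this difference, and hence the measures $\omega_{t,\ep}^n$, converge to the geodesic volume form $\omega_{\varphi_t}^n = \mathcal{G}^n|_{X\times\{t\}}$ weakly in $L^p$.

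First I would establish the uniform $L^p$ bound. By the $C^2$ estimate of lemma (\ref{lem-C2}), together with $\mathcal{G}\leq C\mathcal{G}_{\beta}$ and the fact that $\omega_{\ep'}\to\omega_{\beta}$ as $\delta\to 0$, one gets $0\leq \omega_{t,\ep}\leq C\omega_{\beta}$ uniformly in $\ep$, hence $\omega_{t,\ep}^n\leq C\,\omega_{\beta}^n$. The density of $\omega_{\beta}^n$ with respect to $\omega^n$ behaves like $|z_1|^{-(2-2\beta)}$ near $D$, which lies in $L^p$ precisely when $p(1-\beta)<1$, i.e. $p<\frac{1}{1-\beta}$. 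Thus $\{\omega_{t,\ep}^n\}$ is bounded in $L^p$ for $p$ in the stated range, and by reflexivity every sequence $\ep_k\to 0$ admits a subsequence with $\omega_{t,\ep_k}^n\rightharpoonup \mu$ weakly in $L^p$ for some nonnegative $\mu\leq C\omega_{\beta}^n$.

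Next I would identify the limit, following theorem (4.3) of \cite{CLP}. Put $w_{\ep}:=\ep\phi_{t,\ep}$. The $C^0$ estimate of lemma (\ref{lem-C0}) gives $-C\leq w_{\ep}\leq \ep C$, so $w_{\ep}$ is uniformly bounded with $\limsup_{\ep\to 0} w_{\ep}\leq 0$, while the $C^2$ estimate bounds $dd^c w_{\ep}$ from both sides by $C\omega_{\beta}$; hence $w_{\ep}$ is equicontinuous in the $w$-coordinate and, along a further subsequence, $w_{\ep}\to w_0$ uniformly, where $w_0\leq 0$ is $\omega_{\varphi_t}$-psh. By Bedford--Taylor continuity (applicable since the bounded potentials converge uniformly) one has $\omega_{t,\ep}\to\omega_{\varphi_t}+dd^c w_0$ and $\omega_{t,\ep}^n\to (\omega_{\varphi_t}+dd^c w_0)^n=\mu$. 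On the open set $\{w_0<0\}$ we have $w_{\ep}/\ep\to -\infty$, so $e^{\phi_{t,\ep}}=e^{w_{\ep}/\ep}\to 0$ there and $\mu$ carries no mass; the comparison (domination) principle for bounded $\omega_{\varphi_t}$-psh functions then forces $w_0\equiv 0$, whence $\mu=\omega_{\varphi_t}^n$. As the limit does not depend on the chosen subsequence, the full family converges.

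The main obstacle is exactly this identification step. The $C^0$ estimate controls $\ep\phi_{t,\ep}$ only from below, so a priori $w_0$ could be a nontrivial nonpositive $\omega_{\varphi_t}$-psh function; it is the vanishing of the Monge--Amp\`ere mass of $\mu$ on $\{w_0<0\}$, combined with $w_0\leq 0$ and the comparison principle, that pins down $w_0\equiv 0$. A secondary technical point is that the $L^p$ bound, rather than mere weak-$*$ convergence of measures, must be propagated to the limit, and this is precisely why the range $p<\frac{1}{1-\beta}$ (the integrability threshold of $d\mu_{\beta}$) enters the statement.
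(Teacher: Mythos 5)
Your proof is correct, and up to the final step it follows the same skeleton as the paper's: the same rewriting of equation (\ref{epsilon}) in non-degenerate form, the same use of lemmas (\ref{lem-C0}) and (\ref{lem-C2}) to extract a uniform limit $w_0 = \lim_{\ep\to 0}\ep\phi_{t,\ep}\le 0$ (the paper's $\rho$), and the same observation that the factor $e^{w_\ep/\ep}$ forces the limiting Monge--Amp\`ere measure to vanish on the open set $\{w_0<0\}$. Where you genuinely diverge is in concluding $w_0\equiv 0$. The paper runs the comparison principle on $U_\delta=\{\tau<\varphi-\delta\}$ in the direction that transfers the vanishing to $\mathcal{G}^n$ itself, then argues that $\mathcal{G}^n=(\omega+dd^c\tau)^n$ as measures and appeals to the uniqueness theorem of \cite{K08}. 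You instead invoke the domination principle: since $(\omega+dd^c(\varphi_t+w_0))^n$ charges no mass on $\{\varphi_t+w_0<\varphi_t\}$, one gets $\varphi_t+w_0\ge\varphi_t$ outright. This is legitimate and in fact more economical---it bypasses both the identification of the two measures and the external uniqueness theorem---though you should state the domination principle with respect to the fixed K\"ahler form $\omega$ and the bounded $\omega$-psh functions $\varphi_t+w_0$ and $\varphi_t$, rather than with respect to the possibly degenerate background current $\omega_{\varphi_t}$ as written; the reformulation is immediate since $\omega_{\varphi_t}+dd^cw_0=\omega+dd^c(\varphi_t+w_0)$, but the standard statement of the principle assumes a K\"ahler background. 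Both routes rest ultimately on the Bedford--Taylor comparison principle, so nothing is lost either way: yours is shorter, while the paper's makes explicit the (independently useful) fact that the geodesic's own measure $\mathcal{G}^n$ puts no mass on $\{\tau<\varphi\}$.
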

\begin{proof}
First we re-write equation (\ref{epsilon}) as 
\begin{equation}
\label{epsilon1}
(\omega + \ep \Theta_{\omega}(K_X) + dd^c\tau_{\ep} )^n = e^{\frac{1}{\ep} (\tau_{\ep} - \varphi)}\frac{\omega^n}{|s|^{2-2\beta}_{\psi}},
\end{equation}
where $\tau_{\ep}: = \ep\phi_{t,\ep} + \varphi$. Now thanks to the $C^0$ and $C^2$ estimates, there exists some conic $\mathcal{C}^{1,\bar{1}}$ function $\rho\leq 0$ such that 
$$\ep\phi_{\ep}\rightarrow \rho, $$ in $\mathcal{C}^{1,\alpha,\beta}$, when $\ep\rightarrow 0$. Hence the limit $\tau = \rho + \varphi$ is smaller than $\varphi$.

In the same time, we have the following convergence for Monge-Amp\`ere masses thanks to (iii) of lemma (\ref{lem-app-geod}): 
$$ (\omega + \ep\Theta + dd^c\tau_{\ep})^n \rightarrow (\omega + dd^c\tau)^n, $$
in sense of weak $L^p$ for any $1< p < 1/(1-\beta)$. 

Put $U_{\delta}: = \{ \tau < \varphi - \delta \}$ (an open set of $X$). By comparison principle \cite{K08}, \cite{GZ07}, we have 
$$\int_{U_{\delta}} \mathcal{G}^n \leq \int_{U_{\delta}} (\omega + dd^c\tau)^n. $$
However, on the set $U_{\delta}$, the inequality $\tau_{\ep} - \varphi < -\delta/2$ holds for $\ep$ small enough. Hence by equation (\ref{epsilon1}), we infer
$$ \int_{U_{\delta}} (\omega + dd^c\tau)^n = 0 .$$

Next, we claim that $\mathcal{G}^n|_{\ol{U}} =0$, where $\ol{U}$ is the closure of the open set $\{ \tau < \varphi\}$. This is because the set $\ol{U} - U$ has measure zero, and the density $\omega^n / |s|^{2-2\beta}_{\psi}$ is in $L^1$. Hence we have $\mathcal{G}^n = (\omega + dd^c\tau)$ as currents on $X$.
Finally we invoke the uniqueness result in \cite{K08} to prove $\rho=0$, and our lemma follows.
\end{proof}

\subsection{Interior convexity}
In order to enhance the weak convergence to strong convergence, we invoke the Banach-Saks theorem here as in \cite{CLP}. Choose $t_i \in (0,1)$ with $i=0,1,2$ be three arbitrary points, and then we fix them. By lemma (\ref{lem-final}) and the Banach-Saks, one can find a sequence $\ep_{k}$ as $k\rightarrow +\infty$ such that 
$$ \frac{1}{k}\sum_{j=1}^{k} \exp (\phi_{t_i,\ep_j})\frac{\omega^n }{ |s|^{2-2\beta}_{\psi}} \rightarrow \mathcal{G}^n|_{X\times\{ t_i\}},$$
strongly in $L^p$ with some $p>1$, for $i=0,1,2$. In prior, the sequence $\ep_k$ does depend on $t_i$, but it becomes harmless by the following lemma.
\begin{lemma}
\label{lem-533}
Let $f_i\in L^p$ converges weakly to a function $h$ in $L^p$, and $g_i\in L^p$ converges weakly to a function $k$ in $L^p$. Then
there is a convex combination $\mathcal{F}$, consisting of composition with picking up subsequence and taking average, such that 
$$\mathcal{F}(\{ f_i\}) \rightarrow h,\ \ \ \ \mathcal{F}(\{ g_i\}) \rightarrow k,$$
simultaneously in strong $L^p$ sense. 
\end{lemma}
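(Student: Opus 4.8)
The plan is to recognize this as a standard consequence of the Banach--Saks property of uniformly convex Banach spaces, the only genuine subtlety being the \emph{simultaneous} treatment of the two sequences. Recall that for $1<p<\infty$ the space $L^p$ is uniformly convex, and hence by Kakutani's theorem it enjoys the Banach--Saks property: every bounded sequence admits a subsequence whose arithmetic means converge strongly, and when the original sequence converges weakly this strong limit is necessarily the weak limit. Applied to $\{f_i\}$ by itself this already produces a subsequence whose averages converge strongly to $h$, and likewise $\{g_i\}$ has a subsequence whose averages converge strongly to $k$; the whole point is to arrange both at once.

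First I would observe why the naive iterated extraction fails. If one applies Banach--Saks to $\{f_i\}$ to obtain averages converging to $h$, and then passes to a further subsequence to correct the behaviour of $\{g_i\}$, the average convergence of the $f$-part is destroyed, because Ces\`aro convergence is not inherited by subsequences. To bypass this, the key step is to package the two sequences into a single one living in a product space. I would set $E:=L^p\oplus L^p$ equipped with the norm
$$\|(u,v)\|_E:=\left(\|u\|_{L^p}^p+\|v\|_{L^p}^p\right)^{1/p}.$$
For $1<p<\infty$ the $\ell^p$-sum of two uniformly convex spaces is again uniformly convex, so $E$ has the Banach--Saks property, and the pair $(f_i,g_i)$ converges weakly to $(h,k)$ in $E$.

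Then I would simply apply the Banach--Saks theorem in $E$ to the sequence $(f_i,g_i)$: this yields a single subsequence $(f_{i_j},g_{i_j})$ whose Ces\`aro averages converge strongly to $(h,k)$ in $E$. Since the norm on $E$ dominates each component, strong convergence in $E$ is equivalent to componentwise strong convergence, so that
$$\frac{1}{N}\sum_{j=1}^N f_{i_j}\longrightarrow h,\qquad \frac{1}{N}\sum_{j=1}^N g_{i_j}\longrightarrow k$$
strongly in $L^p$ and simultaneously. Taking $\mathcal{F}$ to be the composition ``pass to this common subsequence, then form arithmetic means'' finishes the argument, and matches exactly the structure (picking a subsequence and averaging) asserted in the statement.

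I expect the only real obstacle to be the point highlighted above---that one cannot apply Banach--Saks separately to the two sequences---and the resulting need to work in the uniformly convex product space $E$. Everything else is a direct invocation of Kakutani's theorem, while the verification that $E$ is uniformly convex (equivalently, that $E$ inherits the Banach--Saks property) is routine and can be quoted.
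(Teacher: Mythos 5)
Your proof is correct, but it takes a genuinely different route from the paper's. The paper also rests on Banach--Saks, yet it handles the simultaneity by \emph{composition} rather than by a product space: it first produces $\mathcal{F}_1$ (subsequence plus Ces\`aro averaging) with $\mathcal{F}_1(\{f_i\})\to h$ strongly, then observes that the same operation applied to $\{g_i\}$ still converges \emph{weakly} to $k$ (both subsequence extraction and averaging preserve weak convergence), applies Banach--Saks a second time to the new sequence $\mathcal{F}_1(\{g_i\})$ to get $\mathcal{F}_2$ with $\mathcal{F}_2\circ\mathcal{F}_1(\{g_i\})\to k$ strongly, and finally uses that $\mathcal{F}_2$, acting on the already strongly convergent sequence $\mathcal{F}_1(\{f_i\})$, keeps it strongly convergent. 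This is exactly how the paper circumvents the obstruction you correctly flag (that refining the subsequence of the original indices destroys Ces\`aro convergence): the second operation acts on the sequence of averages, not on the original sequence, and strong convergence is stable under both subsequence extraction and averaging, so the "naive iteration" objection does not apply to this composed form. Comparing the two: your product-space argument in $L^p\oplus_p L^p$ (which is just $L^p$ of a disjoint union, so uniform convexity and Kakutani's theorem are indeed routine to quote) buys a \emph{single} common subsequence whose arithmetic means work for both components at once, which is structurally cleaner; the paper's argument never leaves $L^p$ itself and iterates naturally, which matters in context since the lemma is actually invoked for three sequences ($t_0$, $t_1$, $t_2$) "by repeating" it---though your version extends just as easily to finitely many sequences by taking the $\ell^p$-sum of finitely many copies.
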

\begin{proof}
For the sequence $\{f_1, f_2,\cdots  \}$, we can pick up a subsequence $\{f_j\}$ such that the first convex combination 
$$ \mathcal{F}_1(\{ f_i \}) = \frac{1}{k} \sum_{j=1}^k f_j \rightarrow h$$ strongly thanks to Banach-Saks. If apply this convex combination to another sequence, then it still
converges weakly as 
$$\mathcal{F}_1(\{ g_i \}) \dashrightarrow k.$$
Now use Banach-Saks again on the new sequence $\mathcal{F}_1(\{g_i \})$, and we have another convex combination $\mathcal{F}_2$ such that 
$$ \mathcal{F}_2\circ \mathcal{F}_1(\{ g_i \}) \rightarrow k,$$ strongly in $L^p$. Notice that $\mathcal{F}_2$ acting on a strongly converging sequence is still 
strongly converging. Therefore, put $\mathcal{F}: = \mathcal{F}_2\circ \mathcal{F}_1$, and we have $\mathcal{F}(\{f_i \})\rightarrow h$ strongly in the same time.
\end{proof}

By repeating lemma (\ref{lem-533}), we can find a convex combination $\mathcal{F}_{\phi_{\ep}}$ such that 
$$\mathcal{F}_{0,1,2}(\{ \exp (\phi_{t_i, \ep})\omega^n / |s|^{2-2\beta}_{\psi} \}) \rightarrow \mathcal{G}^n|_{X\times\{t_i \}}$$
strongly in $L^p$, for $i=0,1,2$ simultaneously. Hence we can define the following volume element as 
$$ \omega^n_{t,k}:= \mathcal{F}^{(k)}_{0,1,2}(\{ \exp (\phi_{t, \ep})\omega^n / |s|^{2-2\beta}_{\psi} \}) $$
where $\mathcal{F}^{(k)}$ means the $k$th term in the convex combination as a sequence. 

Let $\mathcal{M}_k$ be the conic Mabuchi functional evaluated on the conic geodesic $\mathcal{G}$, with the entropy modified by 
the volume element $\omega_{t,k}^n$ as 
\begin{equation}
\label{Mk}
\mathcal{M}_k(t): = c(\beta)\mathcal{E}(\varphi_t) - \mathcal{E}^{Ric\omega - dd^c\psi}(\varphi_t) 
+ \int_{X}\log \frac{\omega^n_{t,k}}{\omega^n/|s|^{2-2\beta}_{\psi}}\mathcal{G}^n
\end{equation}

The new sequence $\mathcal{M}_k$ is an approximation to $\mathcal{M}_c$ at least at these three points $t_0$, $t_1$ and $t_2$. And
we claim that it is a convex function along the conic geodesic.
\begin{lemma}
\label{lem-Mk}
For each $k\geq 1$, the functional $\mathcal{M}_k$ is a continuous convex function on $[0,1]$.
\end{lemma}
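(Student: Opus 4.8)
The plan is to show that $t\mapsto\mathcal{M}_k(t)$ is subharmonic as a function of the complex variable $\tau=e^{t+is}$ on $\Sigma'$; since $\mathcal{M}_k$ depends only on $\Re\tau=t$, this is equivalent to convexity in $t$. I would split $\mathcal{M}_k$ into its energy part $c(\beta)\mathcal{E}-\mathcal{E}^{Ric\omega-dd^c\psi}$ and its modified entropy part $H_k(t):=\int_X v\,\mathcal{G}^n$, where $v:=\log\big(\omega^n_{t,k}/(\omega^n/|s|^{2-2\beta}_\psi)\big)$, and compute the complex Hessian $d_\tau d^c_\tau\mathcal{M}_k$ as a current on $\Sigma'$. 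The objective is to verify the convexity criterion (\ref{convex1}) for the entropy and to observe that the energy Hessian exactly cancels its right-hand side.

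For the energy part I would apply Lemma \ref{lem-second} directly. Since $\mathcal{G}$ is a geodesic, $\mathcal{G}^{n+1}=0$, so $d_\tau d^c_\tau\mathcal{E}(\varphi_\tau)=\int_X\mathcal{G}^{n+1}=0$, and therefore
\begin{equation*}
d_\tau d^c_\tau\big(c(\beta)\mathcal{E}-\mathcal{E}^{Ric\omega-dd^c\psi}\big)=-\int_X\mathcal{G}^n\wedge\pi^*(Ric\omega-dd^c\psi).
\end{equation*}
Thus the whole burden falls on the entropy. The key point is that for fixed $k$ the density is a \emph{finite} convex combination $\sum_j\lambda_j^{(k)}\exp(\phi_{t,\ep_j})$ with constants $\lambda_j^{(k)}\ge0$, $\sum_j\lambda_j^{(k)}=1$, so $v$ is a log-sum-exp of the functions $\phi_{\ep_j}$. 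Differentiating $\log\sum_j\lambda_j^{(k)}e^{\phi_{\ep_j}}$ and discarding the positive ``variance'' term (nonnegative by Cauchy--Schwarz) yields the current inequality $dd^c_{t,X}v\ge\sum_j\mu_j\,dd^c_{t,X}\phi_{\ep_j}$, where $\mu_j=\lambda_j^{(k)}e^{\phi_{\ep_j}}/\sum_l\lambda_l^{(k)}e^{\phi_{\ep_l}}\ge0$ and $\sum_j\mu_j=1$. By Lemma \ref{cor-geod} each $\phi_{\ep_j}$ satisfies $\Theta_\omega(K_X+D)+\ep_j^{-1}\mathcal{G}+dd^c\phi_{\ep_j}\ge0$ on $X\times\Sigma'$, so wedging with the positive form $\mathcal{G}^n$ and using $\mathcal{G}^{n+1}=0$ to annihilate the $\ep_j^{-1}\mathcal{G}$ contribution gives $dd^c\phi_{\ep_j}\wedge\mathcal{G}^n\ge-\Theta_\omega(K_X+D)\wedge\mathcal{G}^n$.

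Summing against the weights $\mu_j$ (which total $1$) then produces
\begin{equation*}
dd^c_{t,X}v\wedge\mathcal{G}^n\ge-\Theta_\omega(K_X+D)\wedge\mathcal{G}^n=\pi^*(Ric\omega-dd^c\psi)\wedge\mathcal{G}^n,
\end{equation*}
the last equality being the curvature normalization built into $\Theta_\omega(K_X+D)$ and into the definition of $\mathcal{E}^{Ric\omega-dd^c\psi}$, valid modulo a $[D]$-current of zero mass by the total-space analogue of Lemma \ref{lem-msr}. This is exactly criterion (\ref{convex1}). Testing against $\chi\ge0$ on $\Sigma'$ and integrating by parts (moving $d_\tau d^c_\tau$ onto $v$, using that $\chi$ is constant along $X$ and $\mathcal{G}^n$ is closed) gives $\int_{\Sigma'}\chi\,d_\tau d^c_\tau H_k=\int_{X\times\Sigma'}\chi\,(dd^c_{t,X}v)\wedge\mathcal{G}^n$, which dominates $\int_{X\times\Sigma'}\chi\,\pi^*(Ric\omega-dd^c\psi)\wedge\mathcal{G}^n$. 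Adding the energy Hessian cancels this term, so $d_\tau d^c_\tau\mathcal{M}_k\ge0$ on $\Sigma'$ and $\mathcal{M}_k$ is convex on $(0,1)$.

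Continuity on the closed interval $[0,1]$ I would deduce from the weak continuity of $\mathcal{E}$ and $\mathcal{E}^\alpha$, together with the uniform $C^0$ bounds (Lemma \ref{lem-C0}) and equicontinuity up to the boundary (Lemma \ref{lem-equi}) of the finitely many $\phi_{t,\ep_j}$: these make $v(t,\cdot)$ uniformly bounded and continuous in $t$, so $H_k(t)=\int_X v\,\mathcal{G}^n$ is continuous by dominated convergence since $\mathcal{G}^n$ has finite mass; a function convex on $(0,1)$ and continuous on $[0,1]$ is convex on $[0,1]$. The main obstacle is making all of this rigorous at the level of currents: both $v$ and $\mathcal{G}$ carry only $\mathcal{C}^{1,\bar1}_\beta$ regularity with conic singularities along $D$, so the log-sum-exp inequality, the wedge products $dd^c\phi_{\ep_j}\wedge\mathcal{G}^n$, and the integration by parts must be justified by smooth approximation and the Bedford--Taylor / non-pluripolar product theory of Section \ref{sec-2} (in particular Lemmas \ref{lem-msr} and \ref{lem-well}), with the $[D]$-contributions shown to carry no mass.
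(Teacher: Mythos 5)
Your proposal is correct and follows essentially the same route as the paper: reduce convexity to the criterion (\ref{convex1}) for the modified entropy, apply the log-sum-exp convexity inequality to the finite convex combination $\omega^n_{t,k}$, and invoke the positivity $\Theta_\omega(K_X+D)+\ep_j^{-1}\mathcal{G}+dd^c\phi_{\ep_j}\geq 0$ from Lemma (\ref{cor-geod}) wedged against $\mathcal{G}^n$ (with $\mathcal{G}^{n+1}=0$ killing the $\ep_j^{-1}\mathcal{G}$ term), while deriving continuity from the $C^0$ and $C^2$ estimates. You merely make explicit a few steps the paper leaves implicit (the cancellation against the energy Hessian and the justification of the wedge products), but the decomposition, key lemmas, and logic are identical.
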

\begin{proof}
The functional $\mathcal{M}_k(t)$ is continuous, thanks to the regularity we got from $C^0$ and $C^2$ estimates for each $\phi_{t,\ep}$.
Then it is enough to show that $\mathcal{M}_k$ is convex in the weak sense. 
And this boils down to the inequality 
$$ dd^c \log\frac{\omega^n_{t,k}}{\omega^n/|s|^{2-2\beta}_{\psi}} \wedge \mathcal{G}^n \geq ( Ric\omega - dd^c\psi )\wedge \mathcal{G}^n. $$ 
For each $k$, writing the convex combination as 
$$\omega^n_{t,k} = \sum_{j=1}^k \lambda_{j}^k \exp(\phi_{t,\ep_j})\omega^n / |s|^{2-2\beta}_{\psi},$$
where $\sum_{j=1}^k \lambda_j^k =1$. Then we have 
$$ dd^c \log \left( \sum_{j=1}^k \lambda_j^k \exp(\phi_{\ep_j}) \right) \geq \sum_{j=1}^k 
\frac{\lambda_j^k \exp(\phi_{\ep_j})}{\sum_{l=1}^k \lambda_{l}^k\exp(\phi_{\ep_l})} dd^c\phi_{\ep_j},  $$
by direct computation. Based on the positivity in lemma (\ref{cor-geod}), we have 
$$ -Ric(\omega) + dd^c\psi+ \ep_j^{-1}\mathcal{G} + dd^c\phi_{\ep_j} \geq 0.$$
Finally, we obtain 
$$dd^c \log \left( \sum_{j=1}^k \lambda_j^k \exp(\phi_{\ep_j}) \right)\wedge\mathcal{G}^n
\geq  \sum_{j=1}^k \frac{\lambda_j^k \exp(\phi_{\ep_j})}{\sum_{l=1}^k \lambda_{l}^k\exp(\phi_{\ep_l})} (Ric\omega - dd^c\psi) \wedge\mathcal{G}^n,  $$
which proves the lemma.
\end{proof}

\begin{corollary}
\label{cor-convex}
The conic Mabuchi functional $\mathcal{M}_c(t)$ on the conic geodesic is a convex function on the interval $(0,1)$. 
\end{corollary}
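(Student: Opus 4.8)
The plan is to establish the three-point convexity inequality for $\mathcal{M}_c$ at an arbitrary interior triple and then invoke the arbitrariness of the triple. Fix $t_0<t_1<t_2$ in $(0,1)$ and set $\lambda=\frac{t_2-t_1}{t_2-t_0}$, so that $t_1=\lambda t_0+(1-\lambda)t_2$. By Lemma \ref{lem-Mk}, for every $k$ the approximant $\mathcal{M}_k$ is convex on $[0,1]$, hence
\begin{equation*}
\mathcal{M}_k(t_1)\leq \lambda\,\mathcal{M}_k(t_0)+(1-\lambda)\,\mathcal{M}_k(t_2).
\end{equation*}
The purpose of the repeated Banach--Saks extraction (Lemma \ref{lem-533}, applied at the three chosen points) is precisely that the modified volume elements satisfy $\omega^n_{t_i,k}\to\mathcal{G}^n|_{X\times\{t_i\}}$ strongly in $L^p$ for $i=0,1,2$ simultaneously. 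Thus it suffices to prove $\mathcal{M}_k(t_i)\to\mathcal{M}_c(t_i)$ for $i=0,1,2$; once this is known, letting $k\to\infty$ in the displayed inequality gives $\mathcal{M}_c(t_1)\leq \lambda\,\mathcal{M}_c(t_0)+(1-\lambda)\,\mathcal{M}_c(t_2)$, and since $t_0<t_1<t_2$ were arbitrary this is exactly convexity of $\mathcal{M}_c$ on $(0,1)$.

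Since the energy parts $c(\beta)\mathcal{E}-\mathcal{E}^{Ric\omega-dd^c\psi}$ of $\mathcal{M}_k$ and $\mathcal{M}_c$ coincide, the required convergence reduces to the entropy term. Writing $d\mu=\omega^n/|s|^{2-2\beta}_{\psi}$, one has
\begin{equation*}
\mathcal{M}_k(t)-\mathcal{M}_c(t)=\int_X \log\frac{\omega^n_{t,k}}{\mathcal{G}^n|_{X\times\{t\}}}\,\mathcal{G}^n|_{X\times\{t\}},
\end{equation*}
so I must show this relative-entropy integral tends to $0$ at each of the three points. I would set $f_k=\omega^n_{t,k}/d\mu$ and $g=\mathcal{G}^n|_{X\times\{t\}}/d\mu$, so that $f_k\to g$ strongly in $L^p$ and, along a subsequence, almost everywhere. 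The $C^0$ estimate of Lemma \ref{lem-C0} provides a uniform bound $\sup_X\phi_{t,\ep}\leq C$, and since each $\omega^n_{t,k}$ is a convex combination of the densities $e^{\phi_{t,\ep_j}}d\mu$, this forces $\log f_k\leq C$ uniformly in $k$.

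The uniform upper bound on $\log f_k$ yields one half of the convergence at once: applying Fatou's lemma to the nonnegative functions $C-\log f_k$ integrated against the fixed finite measure $\mathcal{G}^n$ gives $\limsup_k\int_X\log f_k\,\mathcal{G}^n\leq\int_X\log g\,\mathcal{G}^n$, i.e. $\limsup_k(\mathcal{M}_k-\mathcal{M}_c)(t_i)\leq 0$. The hard part will be the opposite inequality $\liminf_k\int_X\log(f_k/g)\,\mathcal{G}^n\geq 0$, which is the lower semicontinuity of the entropy at the chosen points and which is genuinely delicate: the lower bound on $\phi_{t,\ep}$ coming from Lemma \ref{lem-C0} degenerates like $\ep^{-1}$, so $\log f_k$ can be very negative on small sets, and it is not formal that no $\mathcal{G}^n$-mass escapes to where $\omega^n_{t,k}$ is much smaller than $\mathcal{G}^n$. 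The quantity to control is $\int_{\{f_k<g/2\}}\log(g/f_k)\,\mathcal{G}^n$; here strong $L^1$ convergence forces $\int_{\{f_k<g/2\}}g\,d\mu\to 0$, and I would combine this with the uniform integrability supplied by the strong $L^p$ convergence of Lemma \ref{lem-final} to rule out the logarithmic blow-up on this shrinking set. With both inequalities established, $(\mathcal{M}_k-\mathcal{M}_c)(t_i)\to 0$ for $i=0,1,2$, and passing to the limit in the convex-combination inequality completes the argument; everything outside this one estimate is the by-now standard limiting machinery of \cite{CLP}.
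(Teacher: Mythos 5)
Your outline coincides with the paper's own (largely implicit) argument: fix the triple $t_0<t_1<t_2$, use the convexity of $\mathcal{M}_k$ from Lemma \ref{lem-Mk}, prove $\mathcal{M}_k(t_i)\to\mathcal{M}_c(t_i)$ at the three Banach--Saks points, and then let the triple vary. Your reduction to the relative-entropy integral and the upper (Fatou) half are correct; note also that you only need the limsup inequality at $t_0,t_2$ and the liminf inequality at $t_1$, since $\mathcal{M}_k(t_1)$ sits on the small side of the three-point inequality, which slightly lightens the load.

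The gap is in the liminf half, exactly the step you flag as delicate. You propose to bound $\int_{\{f_k<g/2\}}\log(g/f_k)\,\mathcal{G}^n$ by combining $\mathcal{G}^n(\{f_k<g/2\})\to 0$ with ``the uniform integrability supplied by the strong $L^p$ convergence''. This cannot work as stated: uniform integrability of $\{f_k\}$ constrains how large $f_k$ can be, not how small, while $\log(g/f_k)$ blows up precisely where $f_k$ is tiny. Strong $L^p$ convergence is compatible with divergent relative entropy: on $([0,1],dx)$ with $g\equiv 1$, take $f_k=e^{-e^k}$ on $[0,1/k]$ and $f_k=1$ elsewhere; then $f_k\to g$ in every $L^p$, the $f_k$ are uniformly bounded, yet $\int_{\{f_k<g/2\}}\log(g/f_k)\,dx=e^k/k\to\infty$. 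So no abstract integrability argument closes this step; one must use the specific structure of $f_k=\sum_j\lambda_j^k e^{\phi_{t,\ep_j}}$. Two ingredients do it: (i) the fixed first term of the convex combination carries weight at least $c/k$, and by the lower bound in Lemma \ref{lem-C0} one has $e^{\phi_{t,\ep_1}}\ge e^{-C/\ep_1}$ with $\ep_1$ fixed, so $f_k\ge c_0/k$ and hence $\log(g/f_k)\le C'+\log k$ on all of $X$; (ii) a rate $\|f_k-g\|_{L^1}=O(k^{-\theta})$ with $\theta>0$, which is available if the Banach--Saks extraction is performed quantitatively in $L^p$ (the standard proof yields a polynomial rate) but does not follow from mere strong convergence. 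Granting these, $\int_{\{f_k<g/2\}}\log(g/f_k)\,\mathcal{G}^n\le (C'+\log k)\cdot 2\|f_k-g\|_{L^1}\to 0$, and on $\{f_k\ge g/2\}$ the pointwise bound $g\log(g/f_k)\le 2|g-f_k|$ controls the remaining piece by $\|f_k-g\|_{L^1}$. Without (i) and (ii) the convergence $\mathcal{M}_k(t_i)\to\mathcal{M}_c(t_i)$ is unproved; this is, incidentally, precisely the point the paper itself asserts without justification when it states that $\mathcal{M}_k$ ``is an approximation to $\mathcal{M}_c$'' at the three chosen points.
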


\subsection{Continuity at the boundary}
The entropy function $H_c(t)$ evaluated on the conic geodesic is lower semi-continuous up to the boundary.
In fact, we invoke the following lemma from our previous paper \cite{CLP}.

\begin{lemma}[Chen-Li-P\u aun] 
\label{Chen}
Suppose $f, f_i$ are uniformly bounded non-negative functions, such that $f_i\rightarrow f$ weakly in $L^1$,
and assume $\mu$ is a positive locally finite measure. Then  
\[
\lim_{i} \int_X ( f_i\log f_i - f\log f) d\mu \geqslant 0.
\]
\end{lemma}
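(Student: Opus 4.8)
The plan is to recognize this statement as the weak lower semicontinuity of the convex entropy integrand, and to establish it via a Banach--Saks/convexity argument rather than a direct subgradient estimate. Set $\phi(t)=t\log t$, extended continuously by $\phi(0)=0$; this is convex, and since all the $f_i$ and $f$ are valued in a fixed interval $[0,M]$ (uniform boundedness), $\phi$ is continuous and bounded on the relevant range. Because $X$ is compact, the positive measure $\mu$ is finite. I would first reduce the claim to $\int_X \phi(f)\,d\mu \le \liminf_i \int_X \phi(f_i)\,d\mu$, and then pass to a subsequence along which this $\liminf$ is attained as a genuine limit $L$.

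The next step is to upgrade the convergence so that Banach--Saks applies. Since the $f_i$ are bounded in $L^\infty(\mu)$ and $\mu$ is finite, $\{f_i\}$ is bounded in the Hilbert space $L^2(\mu)$, and any weak $L^2$ limit of a subsequence must coincide with $f$: testing against functions in $L^\infty\subset L^2$ and invoking the weak $L^1$ convergence forces the limit to be $f$. Hence $f_i\to f$ weakly in $L^2(\mu)$. Now I apply the Banach--Saks theorem (already used in the interior convexity argument above) to extract a subsequence $f_{i_k}$ whose Ces\`aro averages $\sigma_N=\frac{1}{N}\sum_{k=1}^N f_{i_k}$ converge strongly in $L^2(\mu)$, and then pass to a further subsequence so that $\sigma_N\to f$ holds $\mu$-a.e.

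Convexity of $\phi$ then yields the pointwise inequality $\phi(\sigma_N)\le \frac{1}{N}\sum_{k=1}^N\phi(f_{i_k})$, which I integrate against $\mu$. The averages $\sigma_N$ remain valued in $[0,M]$, so $\phi(\sigma_N)$ is uniformly bounded and converges a.e.\ to $\phi(f)$; since $\mu(X)<\infty$, the bounded convergence theorem gives $\int_X\phi(\sigma_N)\,d\mu\to\int_X\phi(f)\,d\mu$. On the right-hand side, the Ces\`aro means of the convergent sequence $\int_X\phi(f_{i_k})\,d\mu\to L$ also converge to $L$, so in the limit $\int_X\phi(f)\,d\mu\le L$, which is precisely the asserted lower bound.

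The main obstacle is what motivates this detour in the first place. The tempting one-line proof via the subgradient inequality $\phi(f_i)\ge\phi(f)+(\log f+1)(f_i-f)$ does not close directly, because the test function $\log f+1$ is unbounded below on the set $\{f\to 0\}$, so weak $L^1$ convergence of $f_i-f$ cannot legitimately be paired against it. Routing through $L^2$, Banach--Saks, and a.e.\ convergence circumvents exactly this integrability failure. The only remaining points requiring care, both harmless, are the continuous extension of $t\log t$ at $t=0$ (so that $\phi$ is genuinely bounded and continuous on $[0,M]$) and the finiteness of $\mu$, which is what makes the bounded convergence theorem applicable.
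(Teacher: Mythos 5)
Your proof is correct, but there is nothing in this paper to compare it against: the lemma is invoked as a black box from the earlier work \cite{CLP}, and no proof is reproduced here. Judged on its own merits, your argument is complete and sound. Local finiteness of $\mu$ plus compactness of $X$ does give $\mu(X)<\infty$; uniform boundedness then legitimately upgrades weak $L^1$ convergence to weak $L^2$ convergence (your identification of any weak $L^2$ subsequential limit with $f$ by testing against $L^\infty$ is exactly right); Banach--Saks produces Ces\`aro means $\sigma_N\to f$ strongly in $L^2$, hence $\mu$-a.e.\ along a further subsequence; and Jensen's inequality for the convex function $\phi(t)=t\log t$ together with bounded convergence closes the estimate, once a subsequence realizing the $\liminf$ has been fixed at the outset. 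Your diagnosis of why the naive subgradient pairing $\phi(f_i)\ge\phi(f)+(1+\log f)(f_i-f)$ does not close is also accurate, since $1+\log f$ is not an admissible $L^\infty$ test function near $\{f=0\}$. The standard repair, and the shorter route to this kind of statement, is truncation rather than Banach--Saks: apply the subgradient inequality at $f_\ep:=\max(f,\ep)$, where $1+\log f_\ep$ \emph{is} bounded so the weak $L^1$ convergence can be used, and verify that the resulting error terms are $O\bigl(\ep(1+|\log\ep|)\bigr)$ before letting $\ep\to 0$. That truncation argument is more elementary; your route is slightly heavier but has two virtues: it reuses machinery the paper already deploys in its interior-convexity section (the Banach--Saks extraction of Lemma \ref{lem-533}), and it generalizes verbatim to any convex integrand that is continuous and bounded on the common range of the $f_i$, with no structure of $t\log t$ used beyond convexity.
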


Base on this semi-continuity lemma, we conclude our theorem as follows.

\begin{theorem}
\label{555}
Suppose two conic K\"ahler potential $\varphi_0$ and $\varphi_1$ can be connected by a conic $\mathcal{C}^{1,\bar{1}}_{\beta}$ geodesic $\mathcal{G}$. Then
the conic Mabuchi functional $\mathcal{M}_c(t)$ restricted to $\mathcal{G}$ is a convex continuous function on the closed interval $[0,1]$.
\end{theorem}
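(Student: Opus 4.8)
The plan is to split $\mathcal{M}_c = \bigl(c(\beta)\mathcal{E} - \mathcal{E}^{Ric\omega - dd^c\psi}\bigr) + H_c$ into its energy and entropy parts and to treat them separately. The energy part causes no trouble up to the boundary: the functionals $\mathcal{E}$ and $\mathcal{E}^{\alpha}$ are continuous for the weak topology, and along $\mathcal{G}$ the potentials $\varphi_t$ converge in $C^0$ to $\varphi_0$ (resp. $\varphi_1$) as $t\to 0^+$ (resp. $t\to 1^-$) by the $\mathcal{C}^{1,\bar{1}}_{\beta}$ regularity, so this part is continuous on $[0,1]$ and all the analytic content sits in $H_c$. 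Interior convexity is already in hand: Corollary \ref{cor-convex} gives convexity of $\mathcal{M}_c$ on $(0,1)$, whence $\mathcal{M}_c$ is automatically continuous there and the one-sided limits $L_0:=\lim_{t\to 0^+}\mathcal{M}_c(t)$ and $L_1:=\lim_{t\to 1^-}\mathcal{M}_c(t)$ exist. It remains only to match these limits to the boundary values $\mathcal{M}_c(0),\mathcal{M}_c(1)$ computed from the honest conic metrics $\omega_{\varphi_0},\omega_{\varphi_1}$.

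First I would establish lower semicontinuity at the endpoints. As $t\to 0^+$ the $\mathcal{C}^{1,\bar{1}}_{\beta}$ regularity forces $\varphi_t\to\varphi_0$ in $C^0$ with uniformly bounded conic Laplacian, so the normalized densities $\omega_{\varphi_t}^n/d\mu_0$, with $d\mu_0=\omega^n/|s|^{2-2\beta}_{\psi}$, are uniformly bounded and converge weakly in $L^1$ to $\omega_{\varphi_0}^n/d\mu_0$. Applying Lemma \ref{Chen} to this weakly convergent family yields $\liminf_{t\to 0^+}H_c(t)\geq H_c(\varphi_0)$, and together with continuity of the energy part this gives $\mathcal{M}_c(0)\leq L_0$; the identical argument at the other end gives $\mathcal{M}_c(1)\leq L_1$.

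The reverse inequalities $L_0\leq\mathcal{M}_c(0)$ and $L_1\leq\mathcal{M}_c(1)$ are the heart of the matter, and here I would exploit the convexity of the regularized functionals. By Lemma \ref{lem-Mk} each $\mathcal{M}_k$ is convex on $[0,1]$, hence obeys the chord bound $\mathcal{M}_k(t)\leq (1-t)\mathcal{M}_k(0)+t\mathcal{M}_k(1)$. Since the boundary restrictions of $\mathcal{G}$ are genuine conic K\"ahler metrics, I would arrange Lemma \ref{lem-final} and the Banach--Saks selection to apply also at $t=0$ and $t=1$, so that the modified volume elements $\omega_{t,k}^n$ converge strongly in $L^p$ to $\mathcal{G}^n$ on the fibers over $0$, over an arbitrary interior point, and over $1$, and hence $\mathcal{M}_k\to\mathcal{M}_c$ on all three. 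Passing to the limit in the chord bound yields the sub-chord inequality $\mathcal{M}_c(t)\leq (1-t)\mathcal{M}_c(0)+t\mathcal{M}_c(1)$ on $(0,1)$; letting $t\to 0^+$ and $t\to 1^-$ gives $L_0\leq\mathcal{M}_c(0)$ and $L_1\leq\mathcal{M}_c(1)$. Combined with the lower bounds of the previous paragraph this forces $L_0=\mathcal{M}_c(0)$ and $L_1=\mathcal{M}_c(1)$, i.e. continuity at the endpoints, while the sub-chord inequality simultaneously upgrades the interior convexity to convexity on the full closed interval $[0,1]$.

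The step I expect to be the main obstacle is precisely this upper bound at the boundary, i.e. the sub-chord inequality. Because entropy is only \emph{lower} semicontinuous under the weak $L^1$ convergence that the $\mathcal{C}^{1,\bar{1}}_{\beta}$ regularity supplies, the matching upper bound cannot be read off from the endpoint convergence and must be imported from the convexity of the approximants. The delicate point is that the Monge--Amp\`ere regularization underlying each $\mathcal{M}_k$ was built only on the open cylinder $X\times\Sigma'$, so extending Lemma \ref{lem-final} together with the strong $L^p$ convergence of the densities $\omega_{t,k}^n$ to the boundary fibers, where $\mathcal{G}$ degenerates to the fixed endpoint metrics, requires genuine care; it is here that the higher boundary regularity $\varphi_0,\varphi_1\in\mathcal{H}_C$ should be used.
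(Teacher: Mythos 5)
Your proposal is correct and takes essentially the same approach as the paper: interior convexity from the Banach--Saks approximants $\mathcal{M}_k$ (Lemma \ref{lem-Mk} and Corollary \ref{cor-convex}), lower semicontinuity of the entropy at the endpoints from Lemma \ref{Chen}, and the matching upper bound from the chord inequality for the $\mathcal{M}_k$, which the paper asserts are convex and continuous on the closed interval $[0,1]$ --- indeed your write-up spells out this boundary upper-bound step more explicitly than the paper's own terse concluding subsection, which records only the semicontinuity half. The one inaccuracy is your closing remark that the higher regularity $\varphi_0,\varphi_1\in\mathcal{H}_C$ should be needed to extend the approximation to the fibers over $t=0,1$: Theorem \ref{555} does not assume it, and since the endpoint fibers carry the genuine non-degenerate $\mathcal{C}^{2,\alpha,\beta}$ conic metrics $\omega_{\varphi_0},\omega_{\varphi_1}$ (better than the mere $\mathcal{C}^{1,\bar{1}}_{\beta}$ regularity available on interior fibers), the fiber-wise Monge--Amp\`ere approximation and Lemma \ref{lem-final} are easier there, not harder; the $\mathcal{H}_C$ hypothesis enters only in the subsequent existence corollary via Calami--Zheng.
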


Invoking the existence theorem in \cite{CZ12}, we immediately get the following convexity theorem. 

\begin{corollary}
\label{cor-convex}
If two conic K\"ahler potentials $\varphi_0, \varphi_1$ are in the space  $\mathcal{H}_C$, then the conic Mabuchi functional $\mathcal{M}_c(t)$ restricted to $\mathcal{G}$ is a convex continuous function on the closed interval $[0,1]$.
\end{corollary}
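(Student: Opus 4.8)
The plan is to assemble the statement from pieces already in hand: the convexity of $\mathcal{M}_c$ on the open interval $(0,1)$ obtained above, the continuity of the energy part, and the semicontinuity of the entropy at the endpoints. Write $\mathcal{M}_c(t) = E(t) + H_c(t)$ with $E(t) = c(\beta)\mathcal{E}(\varphi_t) - \mathcal{E}^{Ric\omega - dd^c\psi}(\varphi_t)$. Since $\mathcal{E}$ and $\mathcal{E}^{\alpha}$ are continuous for the weak topology along the $\mathcal{C}^{1,\bar 1}_{\beta}$ geodesic, $E$ is continuous on the closed interval $[0,1]$, so the only issue is the boundary behaviour of $H_c$. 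By the convexity on $(0,1)$ the one-sided limit $\ell_0 := \lim_{t\to 0^+}\mathcal{M}_c(t)$ exists, and the whole problem reduces to showing $\ell_0 = \mathcal{M}_c(0)$, and symmetrically at $t=1$.

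For the lower bound $\ell_0 \geq \mathcal{M}_c(0)$ I would apply Lemma \ref{Chen} with $d\mu = \omega^n/|s|^{2-2\beta}_{\psi}$ and the densities $f_t = \omega_{\varphi_t}^n/d\mu$, which are uniformly bounded by the local isometry with $\omega_\beta$ and converge weakly in $L^1$ to $f_0$ as $t\to 0$, since $\omega_{\varphi_t}^n = \mathcal{G}^n|_{X\times\{t\}} \to \omega_{\varphi_0}^n$. Lemma \ref{Chen} then gives $\liminf_{t\to 0} H_c(t) \geq H_c(0)$, and the continuity of $E$ upgrades this to $\ell_0 \geq \mathcal{M}_c(0)$. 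This is the lower semicontinuity of the entropy and is the soft half of the argument.

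The substance is the reverse inequality $\ell_0 \leq \mathcal{M}_c(0)$, which I would obtain by promoting the interior convexity to the closed interval through the continuous convex approximants $\mathcal{M}_k$ of Lemma \ref{lem-Mk}. The decisive feature is that each $\mathcal{M}_k$ is convex on all of $[0,1]$, so it suffices to run the construction of Section \ref{sec-4} with the three Banach-Saks points chosen as $0$, an arbitrary interior $t$, and $1$, and then to verify $\mathcal{M}_k(t_i)\to\mathcal{M}_c(t_i)$ for $t_i\in\{0,t,1\}$. At the interior point this is Lemma \ref{lem-final} combined with Lemma \ref{lem-533}; passing the three-point convexity $\mathcal{M}_k(t)\leq (1-t)\mathcal{M}_k(0)+t\mathcal{M}_k(1)$ to the limit then yields $\mathcal{M}_c(t)\leq (1-t)\mathcal{M}_c(0)+t\mathcal{M}_c(1)$, and letting $t\to 0$ forces $\ell_0\leq\mathcal{M}_c(0)$. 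Together with the previous paragraph this gives $\ell_0=\mathcal{M}_c(0)$, hence continuity at the endpoints, while the same three-point inequality simultaneously extends the convexity to the closed interval $[0,1]$.

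The hard part is making the approximation of Section \ref{sec-4} valid on the boundary fibres $X\times\{0\}$ and $X\times\{1\}$: the regularization $\mathcal{G}_\delta$ and the solutions $\phi_{t,\eta}$ of $(\dagger_\eta)$ were produced on a relatively compact $\Sigma'$, whereas the endpoints lie on $\partial\Sigma$. I would handle this by exhausting $\Sigma$ by such $\Sigma'$ and checking that every a priori estimate stays uniform up to $\partial\Sigma$, namely the $C^0$ bound of Lemma \ref{lem-C0}, the $C^2$ bound of Lemma \ref{lem-C2}, and the delicate balancing $\ep'=\ep'(\delta)$ of the three parameters; this is legitimate because their constants depend only on the global $\mathcal{C}^{1,\bar 1}_{\beta}$ bound of $\mathcal{G}$ and on the smooth conic boundary data $\omega_{\varphi_0},\omega_{\varphi_1}$. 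Granting this uniformity, the weak convergence of $e^{\phi_{t,\ep}}\,d\mu$ in Lemma \ref{lem-final} extends to $t=0,1$, and the strong $L^p$ convergence of the Banach-Saks averages $\omega^n_{t_i,k}$ furnished by Lemma \ref{lem-533} gives $\mathcal{M}_k(t_i)\to\mathcal{M}_c(t_i)$ at the endpoints, which closes the argument.
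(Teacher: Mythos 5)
Your proposal never uses the hypothesis $\varphi_0,\varphi_1\in\mathcal{H}_C$, and that hypothesis is the whole point of this corollary. The heavy lifting --- interior convexity via the Banach--Saks averages $\mathcal{M}_k$, plus continuity at $t=0,1$ via the semicontinuity Lemma \ref{Chen} --- is already packaged in the paper as Theorem \ref{555}, whose hypothesis is that a conic $\mathcal{C}^{1,\bar 1}_{\beta}$ geodesic $\mathcal{G}$ connecting $\varphi_0$ and $\varphi_1$ \emph{exists}. The corollary replaces that existence assumption by a checkable one ($\mathcal{C}^{3,\beta}$ potentials with metrics in $\mathcal{R}$ and $\beta\in(0,\tfrac12)$), and its proof in the paper consists of exactly two invocations: the Calami--Zheng existence theorem (Theorem \ref{CZ}, from \cite{CZ12}) produces $\mathcal{G}$, and then Theorem \ref{555} applies to it. Your argument instead takes the geodesic as given from the outset (``pieces already in hand''), so the object $\mathcal{G}$ appearing in the conclusion is never constructed and the $\mathcal{H}_C$ assumption is never touched. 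This is a genuine gap, not a stylistic one: as the paper remarks right after Theorem \ref{CZ}, for $\beta\geq 1/2$, or for mere $\mathcal{C}^{2,\alpha,\beta}$ endpoints, the existence of such a geodesic is open, so the existence step cannot be treated as implicit.

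What you actually sketch is a re-proof of Theorem \ref{555} itself, which is redundant here but broadly of the right shape (lower semicontinuity of $H_c$ at the endpoints from Lemma \ref{Chen}, the chord bound obtained by including the endpoints among the Banach--Saks points), consistent with the strategy of \cite{CLP} that the paper alludes to. Even there, however, one step is asserted rather than proved and would not work as written: you cannot make the approximation scheme valid ``up to $\partial\Sigma$'' merely by exhausting $\Sigma$ with relatively compact subsets $\Sigma'$ and claiming uniformity of constants, because the regularization $\mathcal{G}_\delta$ is a convolution in the $t$-direction and is simply not defined on the boundary fibers, no matter how uniform the estimates are in the interior. The natural repair is different and simpler: the boundary fibers $\mathcal{G}|_{X\times\{0\},\{1\}}=\omega_{\varphi_{0,1}}$ are honest conic K\"ahler metrics, so the fiberwise equation (\ref{epsilon}) can be solved directly at $t=0,1$ by Yau's theorem, with no $\delta$-smoothing needed there, and those solutions supply the endpoint values entering the chord bound.
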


\section{Appendix }
\label{sec-app}
\subsection{Blocki-Kolodziej regularization   }

Let $\varphi$ be a $\omega$-$psh$ functions on a complex K\"ahler manifold $X$, and $\omega$ is its K\"ahler form. 
Suppose the Lelong number $\nu(\varphi)$ of $\varphi$ is everywhere zero on $X$. Then Blocki-Kolodziej \cite{BK07} constructed 
a sequence of smooth $(1+\ep_j)\omega$-$psh$ functions $\varphi_j$ on an open subset $M\subset X$,
decreasing to $\varphi$ at every point when $\ep_j\searrow 0$. 
And we will recall the last step of their proof here. 

Take $\rho(z)$ to be the standard mollifier on $\mathbb{C}^n$, and put $\rho_{\delta} = \delta^{-2n}\rho(z/\delta)$ for $\delta>0$. 
Then we denote by $\varphi_{\delta}$ the convolution of $u$ by $\rho_{\delta}$
$$ u_{\delta} (z): = u\ast \rho_{\delta} (z) = \int_{\mathbb{C}^n} u(z - \delta w) \rho(w) d\lambda(w).$$

Let $\{V_\a \}, \a \in \mathcal{A}$ be a finite number of coordinate charts covering $M$, and $V_\a \subset U_{\a}$. 
Suppose $f_\a$ is the local potential of the K\"ahler form $\omega$ on the chart $U_{\a}$. Then $u_\a: = \varphi + f_\a$ has 
the local regularization $u_{\a,\delta}$ in the chart $U_\a$. Then they can prove 
\begin{equation}
\label{app-001}
u_{\a,\delta} - u_{\b,\delta} \rightarrow f_\a - f_\b 
\end{equation}
locally uniformly in $U_\a \cap U_{\b}$. Let $\eta_\a + 1$ be a smooth compactly supported function in $U_\a$ such that $\eta_\a = 0$ on $V_\a$. 
Then there exists a uniform constant $C$ such that $dd^c \eta_\a > -C\omega$, and we can take 
$$ \varphi_{\t} := \max_\a \{ \phi_{\a,\t} \},  $$
where
$$ \phi_{\a,\t}: = u_{\a,\t} - f_\a + \ep \eta_\a /C.  $$
Notice that the constant $C$ does NOT depend on $\ep$ in our case, since the background metric $\omega$ is a smooth K\"ahler form here.

Moreover, near the boundary of $U_{\a}$, the local function $\phi_{\a,\t}$ does not contribute to the maximum since $\eta_{\a} = -1$ there.
Hence we can take the so called regularized Maximum operator $M_{\tau}$ instead of taking maximum.
Here $\tau: = (\tau_\a)$ is a collection of real positive numbers $\{\tau_{\a}\}$ (depending on $\ep$) for each $\a\in \mathcal{A}$.
Therefore, we have a family of smooth $(1+\ep)\omega$-$psh$ 
functions 
$$ \Phi_{\tau, \t}: = M_{\tau} \{ \phi_{\a,\t}\} $$
such that $\Phi_{\tau,\t}$ decreases to $\varphi$ by a diagonal argument of $\delta$ and $\ep$. 

\subsection{Derivatives of regularized maximum}
Recall the definition of the regularized maximum operator first. 
Let $\theta\in C^{\infty}(\mathbb{R},\mathbb{R})$ be a non-negative function supported on $(-1,1)$, such that $\int_{\mathbb{R}} \theta(h)dh =1$ 
and $\int_{\mathbb{R}} h\theta(h) dh =0$. For arbitrary $\tau = (\tau_1,\cdots, \tau_p) \in (0, +\infty)^p$, we denote by $M_{\tau}$ the regularized maximum operator 
\begin{equation}
\label{rmax}
M_\tau(t_1,\cdots, t_p): = \frac{1}{\prod_{1\leq j\leq p} \tau_j}\int_{\mathbb{R}^p} \max \{t_1 + h_1,\cdots, t_p+ h_p \} \prod_{1\leq j\leq p} \theta \left(\frac{h_j}{\tau_j}\right) 
dh_1\cdots dh_p. 
\end{equation}

Note that $M_{\tau}$ is non-decreasing for each variable $t_j, 1\leq j\leq p$,
and it is a smooth and convex function on $\mathbb{R}^p$ \cite{abook}. 
 
Next, we want to investigate the behaviors of derivatives of this operator. 
For simplicity, we take $p=2$, and $(x,y) \in \mathbb{R}^2$ is the Euclidean coordinate. 
Now make the following change of variables
\begin{equation}
\label{2222}
z: = (x+y)/2,\ \ \ w:= (x-y)/2,
\end{equation}

\begin{lemma}
\label{lem-max}
Assume $\tau = \tau_1 = \tau_2$, and then we have 
\begin{equation}
\label{007}
|\d_{z}M_\tau| +  |\d_w M_{\tau}| + |\d_z\d_z M_{\tau}| + |\d_{z}\d_w M_\tau| < C,
\end{equation}
Moreover, 
\begin{equation}
\label{008}
|\d_w\d_w M_{\tau}| \leq C\tau^{-1}, 
\end{equation}
for some uniform constant $C$, not depending on $\tau$ and $(x,y)\in \mathbb{R}^2$.
\end{lemma}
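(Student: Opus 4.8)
The plan is to reduce the two-dimensional integral defining $M_\tau$ to a single function of $w$ by exploiting the normalization of $\theta$. First I would rescale the integration variables by setting $h_j = \tau u_j$, so that
\[
M_\tau(x,y) = \int_{\mathbb{R}^2} \max\{x + \tau u_1,\, y + \tau u_2\}\, \theta(u_1)\theta(u_2)\, du_1\, du_2.
\]
Then I would apply the elementary identity $\max\{a,b\} = \tfrac{1}{2}(a+b) + \tfrac{1}{2}|a-b|$. The linear ``mean'' part integrates, using $\int_{\mathbb{R}}\theta = 1$ together with the vanishing first moment $\int_{\mathbb{R}} h\,\theta(h)\,dh = 0$, to exactly $\tfrac{1}{2}(x+y) = z$; the terms $\tau u_j$ drop out precisely because of the zero-moment condition. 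What survives is the absolute-value part, which depends only on $w = \tfrac{1}{2}(x-y)$:
\[
M_\tau(x,y) = z + G(w), \qquad G(w) := \tfrac{1}{2}\int_{\mathbb{R}^2} |2w + \tau(u_1 - u_2)|\,\theta(u_1)\theta(u_2)\, du_1\, du_2.
\]

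This separation immediately disposes of every term in $(\ref{007})$ except $|\partial_w M_\tau|$: since $M_\tau = z + G(w)$ we have $\partial_z M_\tau = 1$ and $\partial_z \partial_z M_\tau = \partial_z \partial_w M_\tau = 0$, all trivially bounded, so the whole content of the lemma is reduced to estimating $G'$ and $G''$.

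For these I would pass to the autocorrelation $\vartheta(v) := \int_{\mathbb{R}}\theta(u+v)\theta(u)\,du$ of $\theta$, which is smooth, compactly supported, and satisfies $\int_{\mathbb{R}}\vartheta = 1$, and rewrite $G(w) = \tfrac{1}{2}\int_{\mathbb{R}} |2w + \tau v|\,\vartheta(v)\,dv$. Since $\vartheta$ is a smooth bump, $G$ is a convolution of the locally integrable function $|\cdot|$ with a smooth function, hence $C^\infty$. Differentiating once — the singular locus $2w+\tau v = 0$ is a single point in $v$, so dominated convergence applies — gives $G'(w) = \int_{\mathbb{R}} \mathrm{sgn}(2w+\tau v)\,\vartheta(v)\,dv$, whence $|G'(w)| \le \int_{\mathbb{R}}\vartheta = 1$; this yields $(\ref{007})$. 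For the second derivative I would change variables $s = 2w + \tau v$ so as to transfer all of the $w$-dependence onto the smooth factor $\vartheta(\tfrac{s-2w}{\tau})$, differentiate there, and change back, obtaining $G''(w) = -\tfrac{2}{\tau}\int_{\mathbb{R}} \mathrm{sgn}(2w+\tau v)\,\vartheta'(v)\,dv$; since $\int_{\mathbb{R}}|\vartheta'|\,dv$ is a finite constant this gives $|G''(w)| \le C\tau^{-1}$, which is $(\ref{008})$.

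The computation is short once the decomposition $M_\tau = z + G(w)$ is in hand, and the step I would treat most carefully is this scaling argument for $G''$: one cannot differentiate $\mathrm{sgn}(2w+\tau v)$ in $w$ directly, and the factor $\tau^{-1}$ appears exactly from moving the derivative onto $\vartheta$ through the rescaled variable $s$. Conceptually, the vanishing first moment of $\theta$ is what flattens $M_\tau$ completely in the $z$-direction and confines all of its curvature to the $w$-direction, which is precisely why the mixed and pure $z$-second derivatives vanish while $\partial_w\partial_w M_\tau$ carries the $\tau^{-1}$ blow-up.
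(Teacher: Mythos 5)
Your proposal is correct, and it takes a genuinely different route from the paper. The paper proves (\ref{007}) by approximating the max with the softmax $f_r(x,y)=\tfrac1r\log(e^{rx}+e^{ry})$, differentiating under the integral, and letting $r\to\infty$; it then proves (\ref{008}) by the distributional identity $\partial_w\partial_w\max\{z+w,z-w\}=\delta_{\{w=0\}}$, a reduction to the diagonal $\{x=y\}$ justified by an appeal to convexity, and a direct computation against the product mollifier. You instead exploit the two-variable identity $\max\{a,b\}=\tfrac12(a+b)+\tfrac12|a-b|$ together with the normalization $\int_{\mathbb{R}}h\,\theta(h)\,dh=0$ to get the exact splitting $M_\tau=z+G(w)$, after which all the $z$-derivatives in (\ref{007}) are handled identically ($\partial_z M_\tau\equiv 1$, $\partial_z\partial_z M_\tau=\partial_z\partial_w M_\tau\equiv 0$) and the problem collapses to one dimension via the autocorrelation $\vartheta$ of $\theta$. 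Your approach buys several things: it avoids distribution theory entirely (only a.e.\ differentiation of $|\cdot|$ plus dominated convergence); it yields a pointwise formula for $\partial_w\partial_w M_\tau$ (integrating your sgn-integral by parts gives $\partial_w\partial_w M_\tau(w)=\tfrac{4}{\tau}\vartheta(-2w/\tau)$), so the bound $C\tau^{-1}$ is established at every point, not just on the diagonal, and is seen to be sharp at $w=0$; and it retroactively justifies the paper's diagonal-reduction step, which as written is terse --- convexity alone gives $\partial_w\partial_w M_\tau\ge 0$, whereas in your formulation the maximality of the diagonal follows from $\vartheta(v)\le\vartheta(0)$ (Cauchy--Schwarz for autocorrelations). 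The one thing the paper's argument retains is that it works on $\max$ directly without the mean-plus-absolute-value identity, which is special to $p=2$; but since both the paper's proof and its applications (Propositions \ref{prop-glue} and \ref{prop-b}, where at most two charts meet) are set in the case $p=2$, this costs you nothing here.
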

\begin{proof}
There exists another smooth approximation of maximum operator 
$$ \max\{ x, y\} = \lim_{r\rightarrow +\infty} f_r (x, y),$$
where 
$$f_r(x,y): = \frac{1}{r}\log (e^{rx} + e^{ry}). $$
Take derivatives inside of the integral ($\max (x,y)$ is continuous) in equation (\ref{rmax}), and we have 
$$ \d_{z}f_{r} = 1,\ \ \ \d_w f_r = \frac{e^{rx} - e^{ry}}{ e^{rx} + e^{ry}},\ \ \  \d_z\d_z f_r =0,\ \ \ \d_w\d_z f_r =0. $$ 
Then equation (\ref{007}) follows by taking $r\rightarrow +\infty$. On the other hand, observe that 
it is enough to estimate $\d_w\d_w M_\tau$ on the diagonal $\{(x,x)\in \mathbb{R}^2 \}$ since $M_{\tau}$ is a convex function. 
But we have
$$ \d_w\d_w \max\{z+w, z-w\} = \delta_{\{w=0 \}}$$ 
in the distributional sense, where $\delta_{\{ w=0 \}}$ represents the delta function at $\{w=0 \}$.

Now if we put
$$ \theta(x/\tau)\theta(y/\tau) = \rho(z,w,\tau), $$
then $\rho(z,w,\tau)$ is a smooth bounded function supported in the unit rectangle. 
\begin{eqnarray}
\label{009}
\d_w\d_w M_{\tau} (z, 0) &=& \frac{1}{\tau^2} \int_{-\infty}^{+\infty}dz \int_{-\infty}^{+\infty} \delta_{\{w=0\}} \rho(z,w,\tau) dw
\nonumber\\
&=&  \frac{1}{\tau^2} \int_{-\infty}^{+\infty} \rho(z,0,\tau) dz
\nonumber\\
&\leq& C \tau^{-1}.
\end{eqnarray}
The last line follows from the fact that the function
$$\rho(z,0,\tau) = \theta(x/\tau)^2 = \tilde{\rho}(z/\tau)$$
is another mollifier on $z$ coordinate. 
\end{proof}

\subsection{Regularization of $\cC^{1,\bar 1} (\cC^{1,\bar 1}_\b)$  metrics }
First let $\cG$ denote by a $\cC^{1,\bar 1}$ metric on $X$. That is to say, $\cG$ is a positive closed $(1,1)$ current such that
$$ 0\leq \cG \leq C\omega. $$
Writing $\cG: = \omega + dd^c \varphi$, the $\omega$-$psh$ function $\varphi$ has bounded Laplacian on $X$. 
Hence the $C^{1,\a}$ H\"older norm of $\varphi$ is uniformly bounded.

Now suppose $V_{\a} \subset U_\a$ forms a finite cover of an open subset $M\subset X$, and locally we have 
$$ \cG|_{U_\a} = dd^c u_\a,$$
for a family of $psh$ function $u_\a$ on each $U_\a$. 
Put $u_{\a,\delta}$ as the local regularization of $u_\a$ by the standard mollifier on $\mathbb{C}^n$. 
For each small $\ep >0$, we also take 
$$\phi_{\a,\t} := u_{\a,\t} - f_\a + \ep \eta_\a /C, $$
as we did in Blocki-Kolodziej regularization.

Now the observation is that for each $\ep$ fixed, the gap between the boundary value $u_{\a}(z), z\in \d U_\a$
and interior values $u_{\b}(z)$ for all $\b$ such that $z\in V_\b$ is larger than $\ep/2$ for all $\delta$ small enough. 
That is to say, for each point $z\in\d \Omega_\a$ and all $ \a\in\mathcal{A}$, we can choose $\tau = \ep/4$ such that the following inequality holds
\begin{equation}
\label{glue}
 u_\a(z) + \ep/4 < \max_{z\in V_\b} \{ u_{\b} (z) - \ep/4\} \leq \max_{z\in U_\b} \{ u_\b(z) - \ep/4 \},
\end{equation}
for all $\delta$ small enough.
Then thanks to the glueing principle (Corollary 5.19, \cite{abook} ),
the regularized maximum 
$$\Phi_{\ep/4,\t} = M_{\ep/4}\{\phi_{\a,\t}\}$$ 
is well defined for all $\delta$ small enough. 

\begin{prop}
\label{prop-glue}
For each $\ep>0$, there exists a small $\delta >0$, such that we have 
$$ | \d\ol\d \Phi_{\ep/4, \delta} | \leq C,$$
for a uniform constant $C$. Moreover, we have 
$$ \d\ol\d \Phi_{\ep/4, \delta} - \d\ol\d \varphi  \rightarrow 0,$$
in $L^p$ sense, for any $p>1$. 
\end{prop}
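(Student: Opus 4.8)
The plan is to expand $dd^c\Phi_{\ep/4,\delta}$ by the chain rule for the regularized maximum, to isolate the term that reconstructs $dd^c\vp$ from the term that carries the singular $\tau^{-1}$ Hessian of $M_\tau$, and then to kill the latter by letting the convolution scale $\delta$ go to zero much faster than $\ep$. Writing $\tau=\ep/4$ and denoting by $M_\a,\ M_{\a\b}$ the first and second partials of $M_\tau$ in its arguments, the chain rule gives
$$dd^c\Phi_{\ep/4,\delta}=\sum_\a M_\a\,dd^c\phi_{\a,\delta}+\sum_{\a,\b}M_{\a\b}\,d\phi_{\a,\delta}\wedge d^c\phi_{\b,\delta}=:\mathrm{(I)}+\mathrm{(II)}.$$
Since $\phi_{\a,\delta}=u_{\a,\delta}-f_\a+\ep\eta_\a/C$ and convolution commutes with $dd^c$ in each chart, one has $dd^c\phi_{\a,\delta}=\cG*\rho_\delta-\omega+(\ep/C)dd^c\eta_\a$, where $\cG*\rho_\delta$ is the regularization of the (bounded) coefficients of $\cG=\omega+dd^c\vp$.

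First I would dispose of $\mathrm{(I)}$. Translation invariance $M_\tau(t_1+c,\dots)=M_\tau(t)+c$ gives $\sum_\a M_\a=1$ and $0\le M_\a\le 1$, so $\mathrm{(I)}=\sum_\a M_\a(\cG*\rho_\delta)-\omega+O(\ep)$ is a convex combination of uniformly bounded tensors and hence is bounded independently of $\delta$ and $\ep$. Moreover $\cG*\rho_\delta\to\cG$ in $L^p$ for every $p>1$, and since the weights $M_\a$ sum to $1$, $\mathrm{(I)}\to\cG-\omega=dd^c\vp$ in $L^p$ up to the $O(\ep)$ error coming from the fixed smooth functions $\eta_\a$.

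The hard part is $\mathrm{(II)}$, whose coefficients blow up like $\tau^{-1}=4/\ep$ by Lemma \ref{lem-max} (and its evident extension to finitely many arguments). The key algebraic observation is that differentiating $\sum_\a M_\a=1$ yields $\sum_\a M_{\a\b}=0$, so I may subtract a fixed reference potential and rewrite $\mathrm{(II)}=\sum_{\a,\b}M_{\a\b}\,d(\phi_{\a,\delta}-\phi_{\a_0,\delta})\wedge d^c(\phi_{\b,\delta}-\phi_{\a_0,\delta})$ without changing its value. Now
$$\phi_{\a,\delta}-\phi_{\b,\delta}=\big((u_{\a,\delta}-f_\a)-(u_{\b,\delta}-f_\b)\big)+(\ep/C)(\eta_\a-\eta_\b),$$
and since $u_{\a,\delta}-f_\a=\vp*\rho_\delta+(f_\a*\rho_\delta-f_\a)\to\vp$ in $C^1$ (the gradient of $\vp$ being H\"older in the $w$-coordinate), the first bracket is $o_\delta(1)$ in $C^1$ while the second is $O(\ep)$. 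Hence $|d(\phi_{\a,\delta}-\phi_{\b,\delta})|\le o_\delta(1)+C'\ep$, and with $|M_{\a\b}|\le C\tau^{-1}$ and the finiteness of the cover this forces $|\mathrm{(II)}|\le C\tau^{-1}(o_\delta(1)+C'\ep)^2$.

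Finally I would balance the two scales: for each $\ep$ choose $\delta=\delta(\ep)$ so small that $o_\delta(1)\le\ep$; then $|\mathrm{(II)}|\le C(4/\ep)(2C'\ep)^2=O(\ep)$, which is uniformly bounded and tends to $0$. Combined with $\mathrm{(I)}$ this yields at once the uniform bound $|dd^c\Phi_{\ep/4,\delta}|\le C$ and, on letting $\ep\to 0$, the $L^p$ convergence $dd^c\Phi_{\ep/4,\delta}\to dd^c\vp$; the conic $\cC^{1,\bar1}_\b$ case runs identically, with $\tilde\omega_\beta$ replacing $\omega$ and all convergences and gradient norms measured by the model cone metric in the $w$-coordinate. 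The one genuine obstacle is precisely the $\tau^{-1}$ singularity of the regularized maximum, and the whole argument rests on matching it against the quadratic smallness of the gradient gaps $d(\phi_{\a,\delta}-\phi_{\b,\delta})$ — which is exactly why the construction freezes $\tau=\ep/4$ and then lets $\delta\to 0$ faster than $\ep$.
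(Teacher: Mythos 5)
Your proposal is correct and follows essentially the same route as the paper: both isolate the singular $\tau^{-1}$ second derivative of the regularized maximum acting on gradients of differences of regularized local potentials (your term $\mathrm{(II)}$ is exactly the paper's $\partial_w\partial_w M_{\ep/4}\cdot \partial_k w\,\partial_{\bar l}w$ term), and both kill it by choosing $\delta=\delta(\ep)$ so that the gradient gaps $|\partial v_{\alpha,\beta,\delta}|$ are $O(\ep)$, making $\tau^{-1}\cdot O(\ep^2)=O(\ep)\to 0$, while the remaining "average" part is a bounded combination of convolutions of the coefficients of $\mathcal{G}$ converging in $L^p$. The only difference is presentational: you invoke the translation-invariance identities $\sum_\alpha M_\alpha=1$ and $\sum_\alpha M_{\alpha\beta}=0$ to treat an arbitrary finite cover at once, whereas the paper reduces to two charts and works in the explicit variables $z=(x+y)/2$, $w=(x-y)/2$ of Lemma (\ref{lem-max}), which is the two-chart specialization of your algebraic reduction.
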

\begin{proof}
For simplicity, we assume that there are only two coordinate charts $V_{\a}\subset U_\a$ and $V_{\b}\subset U_\b$ intersecting with each other near a point $p$, 
and the general case follows in a similar way. 
Then the regularized maximum operator is a function on $(x,y)\in \mathbb{R}^2$, and we introduce the change of variables $(z,w)\in\mathbb{R}^2$ by equation (\ref{2222}). 
That is to say, if take 
$$z = (\phi_{\a,\t} + \phi_{\b,\t})/2,\ \ \  w = (\phi_{\a,\t} - \phi_{\b,\t})/2, $$
then thanks to Lemma (\ref{lem-max}), we have
\begin{eqnarray}
\label{11111}
\d_k\d_{\bar l} M_{\ep/4} \{ z(\phi_{\a,\t}, \phi_{\b,\t}), w(\phi_{\a,\t}, \phi_{\b,\t})\} &=& \d_k\d_{\bar l}z + \d_w M_{\ep/4}\cdot \d_k\d_{\bar l}w
\nonumber\\
& + & \d_w\d_w M_{\ep/4} \cdot \d_k w \d_{\bar l} w.
\end{eqnarray}
It is easy to see that $\d_w M_{\ep/4}$ converges to zero when $w\rightarrow 0$.  
But for the last term, we need a more carful investigation. 
Take $$v_{\a, \b, \t}: = (u_{\a,\t} - u_{\b,\t}) - (f_\a - f_\b), $$
and 
$$\eta_{\a, \b}: = \eta_{\a} - \eta_\b, $$ on $U_\a\cap U_\b$. 
Note that we have 
$$ \d v_{\a, \b, \t}  \rightarrow 0, $$ in some $C^{\gamma}$ H\"older space when $\delta$ converges to zero. 
On the other hand, we have  
\begin{eqnarray}
\label{33333}
\ep^{-1} \d_k w \d_{\bar l} w &=&
\ep^{-1} \d (v_{\a, \b, \t} + \ep \eta_{\a,\b}) \cdot \ol\d  (v_{\a, \b, \t} + \ep \eta_{\a, \b} )
\nonumber\\
& = & \ep^{-1} \d v_{\a,\b,\t} \ol\d v_{\a,\b,\t} + \d v_{\a,\b,\t} \ol\d\eta_{\a,\b} 
\nonumber\\
&+& \d \eta_{\a,\b}\ol\d v_{\a,\b,\t} + \ep \d \eta_{\a,\b}\ol\d \eta_{\a,\b}. 
\end{eqnarray}
Now for each $\ep > 0$, we pick up a $\delta$ (depending on $\ep$), such that 
$$   |\d v_{\a,\b,\t}|^2 \leq C \ep^2, $$ for some uniform constant $C$. 
Thanks to Lemma (\ref{lem-max}),
for such a pair $(\ep,\t)$, there exists a function $\delta: = \delta(\ep)$ such that  
we have 
$$ |\d_w\d_w M_{\ep/4}\cdot \d_k w \d_{\bar l} w| \rightarrow 0, $$ as $\ep$ converges to zero, and the results follow. 

\end{proof}

Now let $\cG$ be a $\cC^{1,\bar 1}_\b$ metric. That is to say, $\cG$ has bounded Laplacian with respect to a K\"ahler metric $\omega_\b$ on $X$
with cone singularity of angle $2\pi\b$ along a smooth divisor $D$.  
It is enough to estimate the approximation near the divisor, since everything is the same as in Proposition (\ref{prop-glue}) outside a tubular neighborhood of $D$. 
Note that in this tubular neighborhood, we can indeed arrange the finite covering in such a way that 
any point on the divisor belongs to at almost two open coordinate charts $V_\mu$ and $V_\g$.

Take arbitrary point $p\in D$, and an open neighborhood $U$ with coordinate $\{\z, z_2,\cdots, z_n\}$ such that $D: = \{\z =0\}$ in $U$. 
Applying the Blocki-Kolodziej regularization to $\cG$, we obtain the following estimates in the normal direction near the divisor (tangential directions and mixed directions can be estimated in an easier way). 
 
\begin{prop}
\label{prop-b}
For each $\ep>0$, there exists a small $\delta>0$, such that we have 
$$ |\d_\z\d_{\bar\z} \Phi_{\ep/4,\t}| \leq C \t^{2\b-2} $$
\end{prop}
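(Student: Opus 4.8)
The plan is to specialize the three--term decomposition of $\d_k\d_{\bar l}M_{\ep/4}$ established in the proof of Proposition (\ref{prop-glue}) to the normal direction $k=l=\z$, and then estimate each term using the growth control of the convolved conic potential from Lemma (\ref{lem-grow}) together with the derivative bounds for the regularized maximum from Lemma (\ref{lem-max}). As in Proposition (\ref{prop-glue}) I would reduce to a point $p\in D$ lying in at most two charts $V_\mu\subset U_\mu$ and $V_\g\subset U_\g$, both carrying a coordinate in which $D=\{\z=0\}$, and pass to the symmetric variables $z=(\phi_{\mu,\t}+\phi_{\g,\t})/2$, $w=(\phi_{\mu,\t}-\phi_{\g,\t})/2$ of (\ref{2222}). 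Formula (\ref{11111}) then reads
\[
\d_\z\d_{\bar\z}\Phi_{\ep/4,\t}
=\d_\z\d_{\bar\z}z
+(\d_w M_{\ep/4})\,\d_\z\d_{\bar\z}w
+(\d_w\d_w M_{\ep/4})\,\d_\z w\,\d_{\bar\z}w .
\]

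For the first two terms I would use that $\d_\z\d_{\bar\z}\phi_{\a,\t}=\d_\z\d_{\bar\z}u_{\a,\t}-\d_\z\d_{\bar\z}f_\a+(\ep/C)\d_\z\d_{\bar\z}\eta_\a$, where $\d_\z\d_{\bar\z}u_{\a,\t}=g_{\t,1\bar1}$ is precisely the regularized normal coefficient of $\cG$ estimated in Lemma (\ref{lem-grow}), hence $\leq C\t^{2\b-2}$ near $D$, while $f_\a$ and $\eta_\a$ are smooth and so contribute at most $C\leq C\t^{2\b-2}$. Since $\d_w M_{\ep/4}$ is uniformly bounded by (\ref{007}), both $\d_\z\d_{\bar\z}z$ and $(\d_w M_{\ep/4})\,\d_\z\d_{\bar\z}w$ are controlled by $C\t^{2\b-2}$ with a uniform constant.

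The third term is where the estimate is decided. By (\ref{008}) we have $|\d_w\d_w M_{\ep/4}|\leq C\ep^{-1}$, so it suffices to prove $|\d_\z w|^2\leq C\ep\,\t^{2\b-2}$. Writing $\d_\z w=\tfrac12\big(\d_\z v_{\mu,\g,\t}+\ep\,\d_\z\eta_{\mu,\g}/C\big)$ with $v_{\mu,\g,\t}=(u_{\mu,\t}-u_{\g,\t})-(f_\mu-f_\g)$ as in (\ref{33333}), the $\eta$--contribution is harmless: it is $O(\ep^2)$, so after multiplication by $\ep^{-1}$ it yields $O(\ep)\leq C\t^{2\b-2}$. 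The crux is to bound $|\d_\z v_{\mu,\g,\t}|^2$. Because $\cG=\omega+dd^c\varphi$ with a single global potential $\varphi$, one has $u_\mu-u_\g=f_\mu-f_\g$ exactly, so $v_{\mu,\g,\t}$ measures only the discrepancy between convolving $\varphi$ in the two coordinate systems. Using the $C^{1,\alpha}$ regularity of $\varphi$ in the $w$--coordinate guaranteed by the $\cC^{1,\bar1}_\b$ hypothesis and comparing the two convolutions as in Blocki--Kolodziej (\ref{app-001}) and Proposition (\ref{prop-glue}), I would show that $\d_\z v_{\mu,\g,\t}$ carries, beyond the unavoidable normal blow--up $\t^{\b-1}$, an extra positive power of $\t$; choosing $\delta=\delta(\ep)$ small enough then forces $|\d_\z v_{\mu,\g,\t}|^2\leq C\ep^2\t^{2\b-2}$. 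Feeding this back, the third term is $\leq C\ep^{-1}\cdot C\ep^2\t^{2\b-2}=C\ep\,\t^{2\b-2}\leq C\t^{2\b-2}$, and summing the three contributions gives the claim.

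The main obstacle is exactly this last sub--estimate. In the smooth gluing of Proposition (\ref{prop-glue}) one simply sends $|\d_\z v|\to 0$, but near $D$ the gradient of the conic potential is unbounded, growing like $|\z|^{\b-1}$, so $v_{\mu,\g,\t}$ cannot be made uniformly small in $C^1$. The delicate point is to track the precise blow--up rate $\t^{\b-1}$ and verify that the choice $\delta=\delta(\ep)$ still buys one extra factor of $\ep$, which is what defeats the $\ep^{-1}$ produced by $\d_w\d_w M_{\ep/4}$ in (\ref{008}). As the statement already notes, the tangential and mixed directions are genuinely easier: there the corresponding coefficients of $\cG$ remain bounded, so no compensating factor is needed and the argument of Proposition (\ref{prop-glue}) applies verbatim.
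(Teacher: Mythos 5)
Your proposal is correct and follows essentially the same route as the paper: the same decomposition (\ref{11111}), the same reduction of the first two terms to Lemma (\ref{lem-grow}) and (\ref{007}), and the same identification of the crux term $\d_w\d_w M_{\ep/4}\cdot\d_\z w\,\d_{\bar\z}w$ to be beaten by an extra power of $\t$. The sub-estimate you defer ("I would show\dots") is exactly the paper's computation (\ref{55555}): since $\d\varphi\in\cC^{,\a,\b}$ vanishes on $D$ and is H\"older continuous in $w$-coordinates, one gets $|\d(\varphi_{\a,\t}-\varphi)|\leq C\t^{\a\b+\b-1}$, so the extra positive power is precisely $\t^{\a\b}$, and choosing $\delta=\delta(\ep)$ so that $\ep^{-1}\t^{2\a\b}\rightarrow 0$ defeats the $\ep^{-1}$ from (\ref{008}), exactly as you anticipated.
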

\begin{proof}
The only issue here is to understand the convergence of $\d v_{\a,\b,\t}$ in conic case.
But this is determined by the difference between the convolution $\varphi_{\a,\t}$ and $\varphi|_{U_\a}$.
Thanks to Fubini's theorem, we can restrict our calculation in the normal direction, 
Note that $\d \varphi \in \cC^{,\a,\b}$. That is to say, if we write $\d \varphi(\z) = \tilde{f} (|\z|^{\b-1}\z)$, then $\tilde{f}$ is $C^{\a}$ H\"older continuous.
Therefore, we have 
\begin{eqnarray}
\label{55555}
\d( \varphi_{\a,\t} - \varphi) (p) &=&   \{ (\d\varphi )_{\t}- \d\varphi \} (p)
\nonumber\\
&=&  \int_{\mathbb{C}}  (\d \varphi(\t \z) - \d \varphi(0)) \rho(\z) d\lambda(\z)
\nonumber\\
&\leq & C\t^{\a\b + \b -1} \int_{\mathbb{C}} |\z|^{\a\b + \b -1} \rho (\z) d\lambda(\z)
\nonumber\\
&\leq& C'\t^{\a\b + \b -1}.
\end{eqnarray}
Moreover, for points $z$ in the small tubular neighborhood of $D$, we have the same the growth estimate with equation (\ref{55555})
by a similar argument.  
Hence we have 
$$ |\t|^{2-2\b}|\d_w\d_w M_{\ep/4} \cdot \d_\z w\d_{\bar\z} w | \rightarrow 0,  $$
when $\ep$ converges to zero, and the result follows.
\end{proof}

\end{document}